\documentclass{amsart}
\usepackage{xspace, graphicx, epsfig, amssymb, amsmath, amsthm, stmaryrd}

\newcommand {\Z} {\mathbb{Z}}

\newcommand {\C} {\mathbb{C}}

\newcommand {\fl} {\mathrm{fl}}
\newcommand {\St} {\mathrm{St}}

\def\a{\mathfrak {a}}
\def\b{\mathfrak {b}}
\def\gl{\mathfrak {gl}}
\def\h{\mathfrak {h}}
\def\g{\mathfrak {g}}
\def\l{\mathfrak {l}}
\def\m{\mathfrak {m}}
\def\n{\mathfrak {n}}
\def\sl{\mathfrak{sl}}
\def\so{\mathfrak{so}}
\def\sp{\mathfrak{sp}}
\def\t{\mathfrak{t}}

\def\k{\mathfrak{k}}
\def\s{\mathfrak{s}}
\def\F{\mathfrak{F}}
\def\G{\mathfrak{G}}
\def\H{\mathfrak{H}}
\def\Ch{\mathcal{C}}
\def\Dh{\mathcal{D}}

\newcommand {\Span}{\mathrm{Span}}

\newcommand{\Sym}{\mathrm{Sym}}

\newcommand {\tw}{\mathrm{tw}}

\newtheorem{thm}{Theorem}[section]
\newtheorem{lemma}[thm]{Lemma}
\newtheorem{prop}[thm]{Proposition}

\newtheorem{defn}[thm]{Definition}

\begin{document}

\subjclass[2000]{Primary 17B65, Secondary 17B20.}
\title{Borel subalgebras of root-reductive Lie algebras}
\thanks{Work supported by National Science Foundation grant DMS 0354321}
\author{Elizabeth Dan-Cohen}
\address{E. D.-C. Department of Mathematics \\ University of California at Berkeley \\ Berkeley, California 94720, USA}
\email{edc@math.berkeley.edu}

\date{July 18, 2007}

\begin{abstract}
This paper generalizes the classification in \cite{DP2} of Borel subalgebras of $\gl_\infty$.  Root-reductive Lie algebras are direct limits of finite-dimensional reductive Lie algebras along inclusions preserving the root spaces with respect to nested Cartan subalgebras.  A Borel subalgebra of a root-reductive Lie algebra is by definition a maximal locally solvable subalgebra.  The main general result of this paper is that a Borel subalgebra of an infinite-dimensional indecomposable root-reductive Lie algebra is the simultaneous stabilizer of a certain type of generalized flag in each of the standard representations.  

For the three infinite-dimensional simple root-reductive Lie algebras more precise results are obtained.  
The map sending a maximal closed (isotropic) generalized flag in the standard representation to its stabilizer hits Borel subalgebras, yielding a bijection in the cases of $\sl_\infty$ and $\sp_\infty$; in the case of $\so_\infty$ the fibers are of size one and two.  A description is given of a nice class of toral subalgebras contained in any Borel subalgebra.  Finally, certain Borel subalgebras of a general root-reductive Lie algebra are seen to correspond bijectively with Borel subalgebras of the commutator subalgebra, which are understood in terms of the special cases.
\end{abstract}

\maketitle

\section{Introduction}

The representation theory of root-reductive Lie algebras is currently being approached through a structure theory program.  Root-reductive Lie algebras are direct limits of finite-dimensional reductive Lie algebras along inclusions preserving the root spaces with respect to nested Cartan subalgebras.  The appropriate generalization in this context of the notion of a Borel subalgebra of a finite-dimensional Lie algebra is that of a \emph{maximal locally solvable} subalgebra.  This paper describes the Borel subalgebras of root-reductive Lie algebras, generalizing the results of \cite{DP2} in the case of $\gl_\infty$.  

The most general result of this paper, Theorem \ref{main}, states that a Borel subalgebra of an infinite-dimensional indecomposable root-reductive Lie algebra is the simultaneous stabilizer of a certain type of generalized flag in each of the standard representations.  Any root-reductive Lie algebra is the direct sum of finite-dimensional Lie algebras and infinite-dimensional indecomposable root-reductive Lie algebras.  Since Borel subalgebras of a direct sum are precisely direct sums of Borel subalgebras, the theorem can be used to understand Borel subalgebras of any root-reductive Lie algebra.  

Theorems \ref{slmain}, \ref{somain}, and \ref{spmain} address the infinite-dimensional simple root-reductive Lie algebras.  As in the case of $\gl_\infty$ treated in \cite{DP2}, Borel subalgebras of $\sl_\infty$ (or $\so_\infty$,  $\sp_\infty$) are stabilizers of maximal closed (isotropic) generalized flags in the standard representation.  The correspondence between Borel subalgebras and maximal closed (isotropic) generalized flags is bijective in the cases of $\gl_\infty$, $\sl_\infty$, and $\sp_\infty$; whereas a Borel subalgebra of $\so_\infty$ corresponds to one or two maximal closed isotropic generalized flags.  This phenomenon should not be surprising, since every Borel subalgebra of $\so_{2n}$ is the stabilizer of a pair of maximal isotropic flags in the standard representation.  We refer to any pair of maximal isotropic generalized flags corresponding to a single Borel subalgebra of $\so_\infty$ as \emph{twins}.

A nice class of toral subalgebras contained in a Borel subalgebra of $\sl_\infty$, $\so_\infty$, or $\sp_\infty$ is described in Section \ref{formulas}.  In these cases any Borel subalgebra is the span of such a toral subalgebra and the nilpotent subalgebra.  Thus irreducible representations of the Borel subalgebra are given by characters of the toral subalgebra.

Analysis of the general situation continues in Section \ref{general}.  In Theorem \ref{Borelcorrespondence} certain Borel subalgebras of a root-reductive Lie algebra $\g$ are seen to correspond bijectively to the Borel subalgebras of $[\g,\g]$.  It remains unknown whether every Borel subalgebra of $\g$ yields a Borel subalgebra of $[\g,\g]$ when intersected with $[\g,\g]$.

The argument which leads to the classification of Borel subalgebras of $\sl_\infty$, Theorem \ref{slmain}, begins with Theorem \ref{main} and Proposition \ref{special}, and continues with Lemmas \ref{slcyclic} and \ref{slinjective}.  Many elements of the proofs are straightforward applications to $\sl_\infty$ of work on $\gl_\infty$ seen in \cite{DP3}.  The proof of Theorem \ref{main}, by contrast, is quite different from their proof in the case of $\gl_\infty$; the modified proof allows for generalization to the isotropic cases.

I wish to acknowledge Ivan Dimitrov and Ivan Penkov for explaining their work in \cite{DP2}, and for sharing with me the proofs of the results announced there in the form of a manuscript \cite{DP3}.
The debt I owe Ivan Penkov goes further, for he introduced me to root-reductive Lie algebras and helped me greatly as I was first learning about them.  I wish to express my gratitude to Joseph Wolf, for his warm guidance and frequent attention throughout the writing of this paper.

\section{Preliminaries}

\subsection{Notation and a few definitions}

Let $V$ and $V_*$ be countable-dimensional vector spaces over the field of complex numbers $\C$, and let $\langle \cdot, \cdot \rangle : V \times V_* \rightarrow \C$ be a nondegenerate pairing.  We denote by $\gl(V,V_*)$ the Lie algebra associated to the associative algebra $V \otimes V_*$.  By $\sl(V,V_*)$  we denote the traceless part of $\gl(V,V_*)$, i.e. $[\gl(V,V_*) , \gl(V,V_*)]$.  If $\langle \cdot, \cdot \rangle : V \times V \rightarrow \C$ is a symmetric nondegenerate form, we denote by $\so(V)$ the Lie subalgebra $\bigwedge^2 V \subset \gl(V,V)$.  If $\langle \cdot, \cdot \rangle : V \times V \rightarrow \C$ is an antisymmetric nondegenerate form, we denote by $\sp(V)$ the Lie subalgebra $\Sym^2 (V) \subset \gl(V,V)$.  

By a result of Mackey  \cite[p. 171]{Mackey}, as long as the pairing $\langle \cdot , \cdot \rangle$ is nondegenerate, the above algebras do not depend on the pairing, up to isomorphism.  The usual representatives of these isomorphism classes are called $\gl_\infty$, $\sl_\infty$, $\so_\infty$, and $\sp_\infty$, respectively.

We will need a notion of the closure of a subspace of a vector space, with respect to a pairing.  That is, let $X$ and $Y$ be vector spaces, and let $\langle \cdot, \cdot \rangle : X \times Y \rightarrow \C$ be any pairing.  Given a subspace $F \subseteq X$, we consider the subspace $F^{\bot\bot}$, denoted $\overline{F}$, to be its \emph{closure} in $X$.  A subspace $F \subseteq X$ is said to be \emph{closed} if $F = \overline{F}$.  One important identity is that $F^\bot = F^{\bot \bot \bot}$ for any $F \subset X$.  As a result, for any $F \subset X$, the subspace $F^\bot \subset Y$ is closed.  Furthermore, the closure of any subspace is closed.  One may also check that the arbitrary intersection of closed subspaces is closed.  

If $F \subset X$ is a closed subspace and $F \subset G \subset X$ with $\dim G / F < \infty$, then $G$ is closed.  To see this, consider that $\dim G / F \leq \dim \overline{G} / F = \dim \overline{G} / \overline{F} \leq \dim F^\bot / G^\bot \leq \dim G / F$.  Hence $\dim \overline{G} / F = \dim G / F < \infty$, and since $G \subset \overline{G}$, we know $G = \overline{G} $.  

Now suppose $\langle \cdot, \cdot \rangle : V \times V \rightarrow \C$ is a nondegenerate pairing.  A subspace $F \subset V$ is said to be \emph{isotropic} if $F \subset F^\bot$, and \emph{coisotropic} if $F^\bot \subset F$.  If $F \subset V$ is an isotropic subspace, then its closure $\overline{F}$ is also isotropic.  That is, $F \subset F^\bot$ implies $\overline{F} \subset \overline{F^\bot}$, where $\overline{F^\bot} = F^{\bot \bot \bot} = \overline{F}^\bot$.

If $\langle \cdot , \cdot \rangle : V \times V \rightarrow \C$ is a symmetric nondegenerate form, an isotropic subspace $M \subset V$ is maximal isotropic if and only if $M$ is closed and $\dim M^\bot / M \leq 1$.  If $\langle \cdot , \cdot \rangle : V \times V \rightarrow \C$ is an antisymmetric nondegenerate form, a subspace $M \subset V$ is maximal isotropic if and only if $M = M^\bot $.

\subsection{Root-reductive Lie algebras}\label{standardrep}

A Lie algebra $\g$ is \emph{locally finite} if every finite subset of $\g$ is contained in a finite-dimensional subalgebra, i.e. if $\g$ is a union of finite-dimensional subalgebras.  One interesting class of locally finite Lie algebras is the root-reductive Lie algebras.

\begin{defn}
\begin{enumerate}
\item An inclusion of finite-dimensional reductive Lie algebras $\l \subseteq \m$ is called a \emph{root inclusion} if, for some Cartan subalgebra $\h_\m$ of $\m$, the subalgebra $\l \cap \h_\m$ is a Cartan subalgebra of $\l$ and each $\l \cap \h_\m$-root space $\l^\alpha$ is also a root space of $\m$.  
\item A Lie algebra $\g$ is called \emph{root-reductive} if it is isomorphic to a union $\bigcup_{i \in \Z_{>0}} \g_i$ of nested reductive Lie algebras with respect to root inclusions for a fixed choice of nested Cartan subalgebras $\h_i \subseteq \g_i$ with $\h_{i-1} = \h_i \cap \g_{i-1}$.
\end{enumerate}
\end{defn}

To understand the structure of root-reductive Lie algebras one uses the following theorem.

\begin{thm}  \label{structure} \cite{DP1}
Let $\g$ be a root-reductive Lie algebra.
\begin{enumerate}
\item \label{split} There is a split exact sequence of Lie algebras $$0 \rightarrow
[\g, \g] \rightarrow \g \rightarrow \g/[\g, \g] =: \a \rightarrow 0,$$
i.e. $\g \simeq [\g, \g] \subsetplus \a$, with $\a$ abelian.

\item The Lie algebra $[\g, \g]$ \label{simple} is isomorphic to a direct sum of finite-dimensional simple Lie algebras and copies of $\sl_\infty$, $\so_\infty$, and $\sp_\infty$.
\end{enumerate}
\end{thm}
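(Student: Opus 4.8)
The plan is to work directly with a presentation $\g = \bigcup_i \g_i$ as in the definition, with nested Cartan subalgebras $\h_i$, and to set $\h := \bigcup_i \h_i$. For part (1), the abelianness of $\a = \g/[\g,\g]$ is automatic, so the only content is the splitting. I would first observe that $\g = [\g,\g] + \h$: each $\g_i$ is reductive, so $\g_i = [\g_i,\g_i]\oplus\z(\g_i)$ with $\z(\g_i)\subseteq\h_i$, and $[\g_i,\g_i]$ is perfect, hence $[\g_i,\g_i]\subseteq[\g,\g]$; thus $\g_i\subseteq[\g,\g]+\h$ for all $i$. Now pick any vector-space complement $\a'$ to $\h\cap[\g,\g]$ inside the abelian Lie algebra $\h$. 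Then $\a'$ is an abelian subalgebra with $\a'\cap[\g,\g]=0$ and $\a'+[\g,\g]=\h+[\g,\g]=\g$, so $\g$ decomposes as the ideal $[\g,\g]$ plus the subalgebra $\a'\cong\a$, which is exactly the asserted split exact sequence.

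For part (2) I would first reduce to the case $\g=[\g,\g]$. One checks that $[\g,\g]=\bigcup_i[\g_i,\g_i]$ and that the inclusions $[\g_i,\g_i]\hookrightarrow[\g_{i+1},\g_{i+1}]$ are again root inclusions, with compatible Cartan subalgebras $\h_i':=\h_i\cap[\g_i,\g_i]$; so it suffices to analyze a union of finite-dimensional semisimple Lie algebras along root inclusions. The key structural lemma is: if $\s\hookrightarrow\t$ is a root inclusion of semisimple Lie algebras, then each simple ideal of $\s$ is contained in a unique simple ideal of $\t$, and the induced inclusion is again a root inclusion. This holds because the root vectors of a simple ideal $\k\subseteq\s$ are root vectors of $\t$, and since root spaces belonging to different simple ideals of $\t$ are indexed by mutually orthogonal roots while the root system of $\k$ is irreducible, all these root vectors — hence $\k$, which they generate — lie in a single simple ideal of $\t$.

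Using this lemma I would organize the simple ideals of the $\g_i$ into compatible threads and obtain a decomposition $\g=\bigoplus_\sigma\g_\sigma$ into ideals, where each $\g_\sigma$ is a union of finite-dimensional simple Lie algebras along root inclusions; distinct threads give commuting ideals (their members are eventually distinct, hence orthogonal, simple ideals at each finite level), and every element of $\g$ lies in a finite such sub-sum. It then remains to identify each $\g_\sigma$. If the ranks of its members stay bounded, then only finitely many isomorphism types occur and the nondecreasing dimensions stabilize, so $\g_\sigma$ is finite-dimensional simple. Otherwise the ranks tend to infinity, and passing to root systems exhibits the union of the nested irreducible root systems of the members of $\g_\sigma$ as an irreducible locally finite root system of infinite rank; the non-reduced type $BC_\infty$ cannot occur (it would force a non-reduced root system at some finite level), so the union is $A_\infty$, $B_\infty$, $C_\infty$, or $D_\infty$. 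Since $\g_\sigma$ is generated by root vectors subject to Chevalley-type relations determined by this root system, it is isomorphic to the corresponding minimal Lie algebra: $\sl_\infty$ for $A_\infty$, $\so_\infty$ for $B_\infty$ or $D_\infty$, and $\sp_\infty$ for $C_\infty$.

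I expect the last step to be the main obstacle: one must verify that a direct limit of finite-dimensional simple Lie algebras along root inclusions is determined up to isomorphism by its locally finite root system — equivalently, that the relevant root inclusions are ``standard'' up to automorphism, coherently with the limit — and this relies on (or reproves, by bounding how Dynkin types can nest under root inclusions) the classification of irreducible locally finite root systems. Part (1), by contrast, is essentially immediate once one observes $\g=[\g,\g]+\h$.
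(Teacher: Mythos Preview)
The paper does not prove this theorem; it is quoted from \cite{DP1} and used as a black box. So there is no ``paper's own proof'' to compare against. Your outline is essentially the argument of \cite{DP1}, and it is correct in broad strokes: part~(1) follows immediately from $\g=[\g,\g]+\h$ together with a vector-space complement inside the abelian subalgebra $\h$, and part~(2) proceeds by reducing to the semisimple case, pushing simple ideals forward along root inclusions, and then invoking the classification of infinite irreducible locally finite root systems.

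Two small points are worth tightening. First, in your thread decomposition, simple ideals of $\g_i$ can \emph{merge} in $\g_{i+1}$ (e.g.\ $\sl_2\oplus\sl_2\hookrightarrow\sl_4$ block-diagonally is a root inclusion), so a ``thread'' should be taken to mean an equivalence class under the relation ``eventually contained in the same simple ideal''; each $\g_\sigma$ is then a \emph{directed} union of finite-dimensional simple Lie algebras, which is what you actually use. Second, as you yourself flag, the identification of $\g_\sigma$ from its locally finite root system is the substantive step: one must know that root inclusions among classical simple Lie algebras of increasing rank are, up to automorphism, the standard ones, and that the exceptional types cannot persist. This is exactly what \cite{DP1} establishes, so your honest caveat is well placed; with that input, the argument goes through.
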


Since there are no nontrivial extensions of of an abelian Lie algebra by a finite-dimensional simple Lie algebra, any root-reductive Lie algebra is isomorphic to a direct sum of finite-dimensional Lie algebras and a root-reductive Lie algebra $\g$ in which $[\g,\g]$ is isomorphic to a direct sum of copies of $\sl_\infty$, $\so_\infty$, and $\sp_\infty$.

Let $\g$ be an infinite-dimensional indecomposable root-reductive Lie algebra.  Then $[\g,\g] \cong \bigoplus_m \s_m$ as Lie algebras, where for each $m$ the component $\s_m$ is isomorphic to $\sl_\infty$, $\so_\infty$, or $\sp_\infty$.   Let $V_m$ denote the standard representation of $\s_m$, and let $(V_m)_*$ denote the relevant dual representation.  Consider $V_m$ as a $[\g,\g]$-module on which $\bigoplus_{n \neq m} \s_n$ acts trivially.  By Proposition 4.2 of \cite{DPS}, there exists a $\g$-module structure on $V_m$ extending the $[\g,\g]$-module structure.  Likewise, there exists a $\g$-module structure on $(V_m)_*$ in which $\bigoplus_{n \neq m} \s_n$ acts trivially.  One may check that under this construction, the pairing $\langle \cdot , \cdot \rangle : V_m \times (V_m)_* \rightarrow \C$ is $\g$-invariant.  By \emph{the standard representations} of $\g$, we mean the representations $V_m$ together with a choice of $\g$-module structure on each.

A subalgebra of a root-reductive Lie algebra is called a \emph{toral subalgebra} if it consists of elements which are semisimple in the sense of Jordan decomposition \cite{DPS}.  We denote the normalizer in $\g$ of a subalgebra $\k$ by $\n_\g(\k)$. 
 
A locally finite Lie algebra $\s$ is \emph{locally solvable} if every finite subset of $\s$ is contained in a solvable subalgebra, i.e. if $\s$ is a union of finite-dimensional solvable subalgebras.  The \emph{nilpotent subalgebra} of a locally solvable Lie algebra is defined to be the span of all elements which are nilpotent in the sense of Jordan decomposition.  The following is an essential result about representations of locally solvable Lie algebras:
\begin{lemma}\cite{DP3} \label{irreducible}
Let $\s$ be a locally finite locally solvable Lie algebra, i.e. $\s = \bigcup_i \s_i$ with $\s_i$ finite-dimensional and solvable.  If $W$ is an irreducible $\s$-module which is a union of finite-dimensional $\s_i$-modules $W_i$, then $\dim W = 1$.
\end{lemma}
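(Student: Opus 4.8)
The plan is to reduce the statement to the classical fact that a finite-dimensional solvable Lie algebra acting irreducibly on a finite-dimensional vector space over an algebraically closed field of characteristic zero acts by scalars on a one-dimensional space (Lie's theorem, in the form: an irreducible representation of a solvable Lie algebra is one-dimensional). First I would observe that it suffices to show $\dim W_i \le 1$ for all $i$, since then $\dim W = \dim \bigcup_i W_i \le 1$ (and $W \ne 0$ forces equality); but this naive approach fails, because the $W_i$ need not be $\s_i$-irreducible, so Lie's theorem does not apply to them directly. Instead I would argue by contradiction: suppose $\dim W \ge 2$, and pick two linearly independent vectors $w, w' \in W$.

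Next I would exploit irreducibility to produce, for each of finitely many vectors, a finite-dimensional ``approximating'' picture. Since $W$ is irreducible and $w \ne 0$, we have $\s \cdot w$ spanning $W$ together with $w$ (more precisely $W = \C w + \s\cdot w$, or $W$ is the $\s$-submodule generated by $w$); so there exist finitely many elements $x_1, \dots, x_r \in \s$ with $w' \in \Span\{w, x_1 w, \dots, x_r w\}$, or more robustly, so that $w$ and $w'$ both lie in a finite-dimensional subspace of the form $U = \Span\{w\} + \s' \cdot w$ for a suitable finite-dimensional subalgebra $\s'$. Here I would choose the index $i$ large enough that $\s_i$ contains all the finitely many elements $x_1,\dots,x_r$ needed, so that $w, w' \in W_i$ and moreover the action of these elements stays inside $W_i$ (using that $W_i$ is an $\s_i$-submodule).

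The heart of the argument is then the following: form the $\s_i$-submodule $W_i' \subseteq W_i$ generated by $w$. It is finite-dimensional, $\s_i$ is finite-dimensional solvable, so $W_i'$ contains an $\s_i$-irreducible submodule, which by Lie's theorem is one-dimensional, spanned by some $\s_i$-eigenvector $v$. Now I would use irreducibility of $W$ one more time: the $\s$-submodule generated by $v$ is all of $W$, so $w'$ is obtained from $v$ by applying finitely many elements of $\s$; enlarging $i$ again so that all these elements lie in $\s_i$ and all intermediate vectors lie in $W_i$, we conclude $w' \in \s_i$-submodule generated by $v$. But the $\s_i$-submodule generated by the $\s_i$-eigenvector $v$ is again finite-dimensional; I want to iterate to climb down to dimension one, and the clean way is: for a fixed large $i$, the module $W_i$ itself is a union over $j \le i$ of $\s_j$-modules, but $W_i$ need not be $\s_i$-irreducible, so I cannot directly conclude. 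The genuinely correct organization is to argue that $W$ is an irreducible $\s$-module all of whose finitely-generated submodules are finite-dimensional; pick any nonzero $v_0 \in W$, let $M_i$ be the $\s_i$-submodule it generates (finite-dimensional), and note $M_1 \subseteq M_2 \subseteq \cdots$ with $\bigcup_i M_i = W$ an $\s$-submodule hence equal to $W$. Each $M_i$ has a one-dimensional $\s_i$-submodule by Lie's theorem. The obstacle — and the step I expect to require the most care — is that these one-dimensional pieces need not be compatible across different $i$, so producing a single vector fixed up to scalars by all of $\s$ is not immediate; I would handle this by a direct-limit/compactness-style argument, or more concretely by showing that the set of $\s_i$-eigenlines in a fixed finite-dimensional $\s_{i+1}$-submodule is constrained enough (again by Lie's theorem applied to $\s_{i+1}$) that one can choose a coherent sequence, whose union gives a line stable under all of $\s$ — and then irreducibility forces $W$ to equal that line.

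Finally, once a single nonzero vector $v \in W$ spans an $\s$-stable line, irreducibility of $W$ gives $W = \s\text{-module generated by } v = \C v$, so $\dim W = 1$. The main obstacle, to reiterate, is the coherence of the one-dimensional $\s_i$-submodules as $i$ varies: Lie's theorem is applied at each finite stage, but one must pass to the limit correctly, and the key technical input is that every finitely generated $\s$-submodule of $W$ is finite-dimensional (since $W = \bigcup W_i$ with each $W_i$ an $\s_i$-module and generators lie in some $W_i$, the submodule they generate lies in that $W_i$), which is precisely the hypothesis that $W$ is a union of finite-dimensional $\s_i$-modules.
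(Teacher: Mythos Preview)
The paper does not supply its own proof of this lemma; it is quoted from the manuscript \cite{DP3}. So there is nothing to compare against, and I will assess your argument on its own merits.

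You correctly isolate Lie's theorem as the essential finite-dimensional input and correctly identify the real difficulty: producing a single line in $W$ that is simultaneously $\s_i$-stable for every $i$. But your proposal does not close that gap. Searching for $\s_i$-stable \emph{lines} (one-dimensional $\s_i$-submodules) gives no usable inverse system: an $\s_{i+1}$-eigenvector in $W_{i+1}$ is certainly an $\s_i$-eigenvector, but it need not lie in $W_i$, so restriction does not carry the finite set of $\s_{i+1}$-weights on $W_{i+1}$ into the finite set of $\s_i$-weights on $W_i$. Your phrases ``compactness-style argument'' and ``constrained enough \ldots\ to choose a coherent sequence'' are placeholders rather than arguments; nothing you wrote prevents the eigenlines from drifting off to infinity in $W$ as $i$ grows. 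The clean repair is to dualize and look for one-dimensional \emph{quotients}. Fix $0 \neq v_0 \in W$ and replace each $W_i$ by the cyclic $\s_i$-module $U(\s_i)v_0$ (finite-dimensional, with union $W$ by irreducibility). Let $T_i \subset \s_i^*$ be the finite nonempty set of weights $\lambda$ admitting a nonzero $\s_i$-map $W_i \to \C_\lambda$. Now restriction \emph{does} give maps $T_{i+1} \to T_i$: any nonzero $\s_{i+1}$-map $\phi \colon W_{i+1} \to \C_\lambda$ satisfies $\phi(v_0) \neq 0$ since $v_0$ generates $W_{i+1}$, so $\phi|_{W_i}$ is nonzero and $\lambda|_{\s_i} \in T_i$. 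The inverse limit of these finite nonempty sets is nonempty; a point of it is a weight $\lambda \colon \s \to \C$, and the unique normalized maps $\phi_i$ glue to a nonzero $\s$-map $W \to \C_\lambda$, whose kernel is a proper submodule and hence zero. The fixed generator $v_0$ is exactly what anchors the system and makes the compactness step go through --- this is the idea missing from your sketch.
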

Finally, a subalgebra of a root-reductive Lie algebra is called a \emph{Borel subalgebra} if it is maximal locally solvable.



\subsection{Generalized flags}\label{generalizedflagterminology}

The definitions in this section, apart from those of bivalent closed and Borel generalized flag, were made in \cite{DP2} to study Borel subalgebras of $\gl_\infty$.  Let $X$ be a complex vector space.  A \emph{chain} in $X$ is a set of subspaces of $X$ totally ordered by inclusion.  A \emph{generalized flag $\F$ in $X$} is a chain in $X$ such that each subspace $F \in \F$ has an immediate predecessor or an immediate successor in the inclusion ordering, and for every nonzero $x \in X$ there exists an immediate predecessor-successor pair $F' \subset F'' \in \F$ with $x \in F'' \setminus F'$.  Let $A$ be the ordered set of immediate predecessor-successor pairs, and denote by $F'_\alpha$ the predecessor and by $F''_\alpha$ the successor of each pair $\alpha \in A$.  Since every subspace in $\F$ is either the immediate predecessor or the immediate successor of another subspace, a generalized flag $\F$ may be considered as $\F = \{ F'_\alpha , F''_\alpha \}_{\alpha \in A}$.

Let $x \in X$ be nonzero.  Then we denote by $F'_x$ and $F''_x$ the predecessor and successor, respectively, of the immediate predecessor-successor pair such that $x \in F''_x \setminus F'_x$, obtained from the definition of a generalized flag.  A generalized flag $\G$ is considered to be a \emph{refinement} of $\F$ if $F'_x \subset G'_x \subset G''_x \subset F''_x$ for every nonzero $x \in X$.  A generalized flag $\F = \{ F'_\alpha , F''_\alpha \}_{\alpha \in A}$ is maximal (with respect to refinements) if $\dim F''_\alpha / F'_\alpha = 1$ for all $\alpha \in A$.

Suppose $\Ch$ is a chain of subspaces in $X$ satisfying the property that for each $x \in X$, there exists a subspace $C \in \Ch$ containing $x$, as well as a subspace $C \in \Ch$ not containing $x$.  (This is not terribly restrictive, as one sufficient condition is that $0$ and $X$ be elements of $\Ch$.)  Then one may obtain a generalized flag $\fl(\Ch)$ by defining: $$\fl(\Ch):= \{ \bigcup_{x \notin C \in \Ch} C , \bigcap_{x \in C \in \Ch} C \}_{ 0 \neq x \in X }.$$
If $\F = \fl (\Ch)$, then for each nonzero $x \in X$, one has $F'_x = \bigcup_{x \notin C \in \Ch} C $ and $F''_x = \bigcap_{x \in C \in \Ch} C$.  The generalized flag obtained from a chain is not necessarily a subset of that chain, nor must it contain every subspace in the chain.  Take as an example a chain of the form $$0 \subsetneq V_1 \subsetneq V_2 \subsetneq V_3 \subsetneq \cdots \subsetneq \bigcup V_i \subsetneq \cdots \subsetneq W_3 \subsetneq W_2 \subsetneq W_1 \subsetneq X.$$  If on the one hand $\bigcup_i V_i \neq \bigcap_j W_j$, then the generalized flag obtained from this chain is $$0 \subsetneq V_1 \subsetneq V_2 \subsetneq V_3 \subsetneq \cdots \subsetneq \bigcup V_i \subsetneq \bigcap W_j \subsetneq \cdots \subsetneq W_3 \subsetneq W_2 \subsetneq W_1 \subsetneq X.$$  If on the other hand $\bigcup_i V_i = \bigcap_j W_j$, then the generalized flag obtained from this chain is $$0 \subsetneq V_1 \subsetneq V_2 \subsetneq V_3 \subsetneq \cdots \subsetneq \cdots \subsetneq W_3 \subsetneq W_2 \subsetneq W_1 \subsetneq X.$$  

Now suppose that there is a bilinear form $X \times Y \rightarrow \C$.  For any chain $\Ch$ of subspaces of $X$, one may consider the set of subspaces given by $\Ch^\bot := \{ C^\bot \}_{C \in \Ch}$, which is a chain in $Y$.  A generalized flag $\F$ is said to be \emph{closed} if $\F = \fl( \F^{\bot\bot})$.   A generalized flag is closed if and only if every immediate succesor is closed while every immediate predecessor has as its closure either itself or its immediate successor.  In the context of closed generalized flags, we use the term \emph{good pair} to refer to any immediate predecessor-successor pair of which the predecessor is closed.  A closed generalized flag is a maximal closed generalized flag if and only if every good pair has codimension $1$.

We say that a closed generalized flag $\F$ is \emph{bivalent} if every good pair has codimension $1$ or $\infty$.  Let $\F$ be a bivalent closed generalized flag in $X$.  A generalized flag $\G$ refining $\F$ is called \emph{Borel}\footnote{It may turn out that the only Borel generalized flags which are of interest are the maximal closed generalized flags.} if whenever a nonzero $x \in X$ yields a good pair with infinite codimension $F'_x \subset F''_x$ in $\F$, it holds that $\dim G''_x / G'_x = 1$ and $\overline{G'_x} = F''_x$; and otherwise $F'_x = G'_x \subset G''_x = F''_x$.  Note that maximal closed generalized flags are a subset of the bivalent closed generalized flags, and that any maximal closed generalized flag may be considered as a Borel generalized flag refining itself.
 
If $\F = \{F'_\alpha , F''_\alpha \}$ is a generalized flag in $V$, then the stabilizer of $\F$ in $\gl(V,V_*)$ may be calculated as $\St_\F = \sum_\alpha F''_\alpha \otimes (F'_\alpha)^\bot$ \cite{DP2}.  This is not hard to check, and a proof is given in \cite{DP3}.  Also, the span of the nilpotent elements of $\St_\F$ (that is to say its nilpotent subalgebra, since $\St_\F$ is locally solvable as seen below) is given by the formula $\sum_\alpha F''_\alpha \otimes (F''_\alpha)^\bot$ \cite{DP2}.

The following proposition is a consequence of a more complicated statement in \cite{DP3}, and I present an alternative proof.

\begin{prop} \label{locsolvable}
Let $\F$ be a maximal generalized flag in $V$.  Then the stabilizer in $\gl(V,V_*)$ of $\F$ is a locally solvable subalgebra.
\end{prop}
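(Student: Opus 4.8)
The plan is to verify local solvability straight from the definition: I will show that every finite subset $S$ of $\St_\F$ is contained in a finite-dimensional solvable subalgebra. Using the formula $\St_\F = \sum_\alpha F''_\alpha \otimes (F'_\alpha)^\bot$ recalled above, $S$ lies in the linear span of finitely many rank-one operators $v_1 \otimes w_1, \dots, v_n \otimes w_n$ with $v_i \in F''_{\alpha_i}$ and $w_i \in (F'_{\alpha_i})^\bot$. Let $\m$ be the Lie subalgebra of $\gl(V, V_*)$ generated by these operators. Because multiplication in the associative algebra $V \otimes V_*$ satisfies $(v \otimes w)(v' \otimes w') = \langle v', w \rangle\, v \otimes w'$, every product of the generators is a scalar multiple of some $v_i \otimes w_j$; hence $\m \subseteq \Span\{\, v_i \otimes w_j : 1 \le i, j \le n \,\}$, so $\m$ is finite-dimensional, and since $\St_\F$ is a subalgebra we have $S \subseteq \m \subseteq \St_\F$. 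It then remains only to prove that $\m$ is solvable.

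For that I would restrict the action of $\m$ to the finite-dimensional subspace $U := \Span\{v_1, \dots, v_n\}$. Every element of $\m \subseteq \Span\{v_i \otimes w_j\}$ maps $V$ into $U$, so $\m$ stabilizes $U$; write $\rho : \m \to \gl(U)$ for the resulting representation. The crux is to show that the chain $\Dh := \{\, U \cap F : F \in \F \,\}$ is a \emph{complete} flag in $U$, and this is the one place the maximality of $\F$ is used. Namely, $\Dh$ is a finite chain of subspaces of $U$ whose least element is $U \cap \bigcap_{F \in \F} F = 0$ and whose greatest element is $U$; and if $Z' \subsetneq Z''$ are consecutive members, pick $u \in Z'' \setminus Z'$ and let $F'_u \subsetneq F''_u$ be the immediate predecessor--successor pair of $\F$ with $u \in F''_u \setminus F'_u$. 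Since $\Dh$ is a chain, $U \cap F''_u$ is the smallest of its members containing $u$, so $Z'' \subseteq U \cap F''_u$, while $U \cap F'_u$ is a member not containing $u$, so $U \cap F'_u \subseteq Z'$; hence $\dim Z''/Z' \le \dim \bigl( (U \cap F''_u)/(U \cap F'_u) \bigr) \le \dim F''_u / F'_u = 1$, where the middle inequality uses $F'_u \subseteq F''_u$ and the last equality uses maximality. Each element of $\m$ stabilizes every $F \in \F$ and stabilizes $U$, hence stabilizes every $U \cap F$, so $\rho(\m)$ is contained in the stabilizer in $\gl(U)$ of the complete flag $\Dh$, which is a solvable (upper-triangular) subalgebra.

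Finally I would dispose of $\ker \rho$. An element of $\ker \rho$ vanishes on $U$, and every element of $\m$ maps $V$ into $U$, so for $X, Y \in \ker \rho$ both composites $XY$ and $YX$ vanish, whence $\ker \rho$ is abelian. Thus $\m$ is an extension of the solvable algebra $\rho(\m)$ by the abelian algebra $\ker \rho$, so $\m$ is solvable and contains $S$, completing the argument. I expect the delicate step to be exactly the claim that $\Dh$ is a complete flag in $U$: that is where maximality is essential, and it is what lets one run a direct argument rather than invoking the structure theory of generalized flags. The remaining steps are routine manipulations with rank-one operators, the only care being needed at the endpoints $0$ and $U$ of the chain $\Dh$.
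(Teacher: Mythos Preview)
Your argument is correct and reaches the same conclusion, but the route differs from the paper's. The paper exhausts $\gl(V,V_*)$ by subalgebras $X\otimes Y\cong\gl_d$ with $X\subset V$, $Y\subset V_*$ finite-dimensional and the restricted pairing nondegenerate, and then shows directly that $\St_\F\cap(X\otimes Y)$ lies in the stabilizer of the full flag $\{X\cap F''_\alpha\}$ in $X$, i.e.\ in a Borel subalgebra of $\gl_d$. You instead start from a finite set $S$, pass to the Lie subalgebra generated by finitely many rank-one tensors, and argue via the representation $\rho$ on $U=\Span\{v_i\}$ together with an analysis of $\ker\rho$. Both proofs hinge on the same key fact---that intersecting $\F$ with a finite-dimensional subspace yields a complete flag, which is exactly where maximality enters---but the paper's version avoids the kernel argument by arranging from the outset that $X\otimes Y$ acts faithfully on $X$, while yours avoids having to set up the nondegenerate exhaustion. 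One small wording slip: when you write ``$U\cap F''_u$ is the smallest of its members containing $u$, so $Z''\subseteq U\cap F''_u$,'' the implication as stated goes the wrong way; what you need (and what is true) is that $Z''$ is the smallest member of $\Dh$ containing $u$ by consecutiveness, hence $Z''\subseteq U\cap F''_u$---or simply observe that both are minimal and therefore equal.
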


\begin{proof}
Let $X \subset V$ and $Y \subset V_*$ be finite-dimensional subspaces such that the restriction of $\langle \cdot , \cdot \rangle$ to $X \times Y$ is nondegenerate.  Let $d$ be the dimension of $X$, and of course $X \otimes Y \cong \gl_d$.  Observe that $\gl(V,V_*)$ may be exhausted by such subalgebras.

For $i = 1 , \ldots , d$ there exists $\alpha_i \in A$ such that $\dim (X \cap F''_{\alpha_i} )= i$.  Let $X_i := X \cap F''_{\alpha_i}$.  Then $$ 0 \subset X_1 \subset \cdots \subset X_{d-1} \subset X_d = X$$ is a maximal flag in $X$.  Choose for each $i$ an element $x_i \in X_i \setminus X_{i-1}$.  For each $i$ there exists $\beta_i \in A$ such that $x_i \in F''_{\beta_i} \setminus F'_{\beta_i}$.  Then $\St_\F \cap (X \otimes Y) = \sum_{i=1}^d X_i \otimes ((F'_{\beta_i})^\bot \cap Y)$.

One may check that $(F'_{\beta_i})^\bot \cap Y \subset X_{i-1}^\bot$, where the perpendicular complement of $F'_{\beta_i}$ is taken in $V_*$ and the perpendicular complement of $X_{i-1}$ is taken in $Y$.  This follows immediately from the fact that $X_{i-1} \subset F'_{\beta_i}$.  Therefore $\St_\F \cap (X \otimes Y)  \subset  \sum_{i=1}^d X_i \otimes X_{i-1}^\bot$.  The latter expression is the stabilizer of the maximal flag $X$ in $X \otimes Y$, which is solvable since it is a Borel subalgebra.  Therefore $\St_\F \cap (X \otimes Y)$ is solvable.  It follows that $\St_\F$ is locally solvable.
\end{proof}

\subsection{Isotropic generalized flags}

Now let $\langle \cdot , \cdot \rangle : V \times V \rightarrow \C$ be a bilinear form.  We will say that $\F$ is an \emph{isotropic generalized flag in $V$} if every $F \in \F$ is an isotropic subspace of $V$, and $\F$ is a generalized flag in $\bigcup_{F \in \F} F$.  As before, we say an isotropic generalized flag $\F$ is \emph{closed} if $\F = \fl( \F^{\bot\bot})$.  Again, an isotropic generalized flag is closed if and only if every immediate succesor is closed while every immediate predecessor has as its closure either itself or its immediate successor.  A closed isotropic generalized flag $\F$ in $V$ is a maximal closed isotropic generalized flag if and only if the subspace $\bigcup_{F \in \F} F$ is a maximal isotropic subspace of $V$, and every good pair has codimension $1$.

If $\F$ is a generalized flag in $V$, let $\F_{iso}$ denote the pairs of $\F$ which are isotropic, i.e. $\F_{iso} := \{ F'_\alpha , F''_\alpha : F''_\alpha \subset (F''_\alpha)^\bot \}$.

\section{Subspaces stable under a Borel subalgebra}\label{stablesubspaces}

\subsection{Generalized flags in the standard representations}

The following result motivates the definition given in section \ref{generalizedflagterminology} of a Borel generalized flag.

\begin{thm}\label{main}
Any Borel subalgebra of an infinite-dimensional indecomposable root-reductive Lie algebra is the simultaneous stabilizer of a Borel generalized flag in each of the standard representations.
\end{thm}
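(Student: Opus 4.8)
The plan is to reduce the theorem, by maximality of $\b$, to two assertions: that $\b$ stabilizes a Borel generalized flag $\F_m$ in each standard representation $V_m$, and that the simultaneous stabilizer $\bigcap_m \St_{\F_m}\cap\g$ of such a family is locally solvable. Granting these, the inclusion $\b\subseteq\bigcap_m\St_{\F_m}\cap\g$ is automatic once the $\F_m$ are built to be $\b$-stable, and maximality of the locally solvable subalgebra $\b$ forces equality. So the substance is the construction of the $\F_m$, carried out so as to apply uniformly to $\sl_\infty$, $\so_\infty$, and $\sp_\infty$; this is the reason the argument must differ from the $\gl_\infty$ treatment in \cite{DP3}.

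Fix $m$ and write $\s:=\s_m$, $V:=V_m$. First, $V$ is a locally finite $\g$-module: it is locally finite over $\s$ by construction of the standard representations, and the passage to the $\g$-action of \cite{DPS} only adjoins the action of the finite-dimensional abelian complement of $[\g,\g]$, which can be chosen compatibly with an exhaustion by finite-dimensional submodules. Since $\g=\bigcup_i\g_i$ we have $\b=\bigcup_i(\b\cap\g_i)$ with each $\b\cap\g_i$ finite-dimensional and solvable, so Lie's theorem supplies many $\b$-stable subspaces of $V$. Because the pairing $V\times V_*\to\C$ is $\g$-invariant, the closure of a $\b$-stable subspace is again $\b$-stable. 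Let $\Ch_m$ be the set of closed $\b$-stable subspaces of $V$ when $\s\cong\sl_\infty$, and the set of closed $\b$-stable isotropic subspaces when $\s\cong\so_\infty$ or $\sp_\infty$. The central claim is that $\Ch_m$ is a chain, with the single exception that for $\s\cong\so_\infty$ it may split into two chains that agree below the maximal isotropic level and branch once at the top --- the origin of the ``twins'' of Theorem \ref{somain}. Choosing a maximal chain in $\Ch_m$ (for $\so_\infty$, one of the two branches) and forming $\fl$ of it gives a closed, and in the isotropic cases isotropic, generalized flag $\F_m^{c}$ stabilized by $\b$.

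Next one upgrades $\F_m^{c}$ to a Borel generalized flag. If a good pair $F'\subset F''$ of $\F_m^{c}$ had $1<\dim F''/F'<\infty$, then the solvable image of $\b$ in $\gl(F''/F')$ would, by Lie's theorem, have a one-dimensional invariant subspace $L/F'$; since $F'$ is closed and $\dim L/F'<\infty$, the subspace $L$ is closed, so $L\in\Ch_m$ would lie strictly between $F'$ and $F''$, which is impossible. Hence $\F_m^{c}$ is bivalent. At an infinite-codimensional good pair $F'\subset F''$, the $\b$-module $F''/F'$ is infinite-dimensional, locally finite, and has no proper nonzero closed $\b$-submodule; by Lemma \ref{irreducible} it is not simple, and a study of its finite-dimensional subquotients via Lie's theorem yields a descending chain $F''=N_0\supset N_1\supset N_2\supset\cdots$ of $\b$-submodules with $\dim N_{k-1}/N_k=1$ and $\bigcap_k N_k=F'$, where each proper $N_k$ is dense in $F''$ because its closure is a closed $\b$-submodule properly containing $F'$ and hence equals $F''$. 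Inserting these chains into $\F_m^{c}$ produces a generalized flag $\F_m$ refining it which meets the defining conditions of a Borel generalized flag, and $\b$ stabilizes $\F_m$ by construction.

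It remains to see that $\bigcap_m\St_{\F_m}\cap\g$ is locally solvable. An extension of Proposition \ref{locsolvable} shows that $\St_{\F_m}$ is locally solvable in $\gl(V_m,(V_m)_*)$: the exhaustion argument still goes through because all good pairs of $\F_m$ have codimension $1$ while the remaining immediate predecessors are dense in their successors, so the diagonal blocks $F''_\alpha\otimes(F'_\alpha)^\bot$ contribute only abelian (nilpotent) pieces. Intersecting over $m$, and using that $\bigoplus_{n\ne m}\s_n$ acts trivially on $V_m$ together with indecomposability of $\g$ to pin down the abelian part, one obtains that $\bigcap_m\St_{\F_m}\cap\g$ is locally solvable; maximality of $\b$ then gives the equality. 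The main obstacle is unquestionably the chain property of $\Ch_m$: one must reprove, in the locally solvable, infinite-dimensional, and type-uniform setting, the finite-dimensional fact that a Borel subalgebra determines a single flag (of closed isotropic subspaces, up to the $\so_\infty$ branching). The approach I would take is to show that an incomparable pair in $\Ch_m$ not reconcilable by a single branching lets one produce an element of $\g$ that normalizes $\b$ but does not lie in it, so that adjoining it enlarges $\b$ among the locally solvable subalgebras --- contradicting maximality; carrying this out without recourse to a Cartan subalgebra, uniformly for the three types, is the crux. The existence of the descending chains used in the Borel refinement is a further, smaller, technical point.
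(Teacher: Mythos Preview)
Your proposal takes a harder route than necessary, and the step you correctly flag as ``the crux'' --- proving that the set $\Ch_m$ of all closed $\b$-stable (isotropic) subspaces is a chain --- is entirely avoidable. The paper neither proves nor uses this. Instead it simply \emph{chooses}, by Zorn's lemma, a maximal chain $\Ch_m$ of closed $\b$-stable subspaces of $V_m$ and sets $\F_m:=\fl(\Ch_m)$. Nothing about uniqueness or totality is needed: the resulting closed generalized flag is bivalent by essentially your argument, and is then refined to a Borel generalized flag $\G_m$ by inserting, inside each infinite-codimensional good pair $F'\subset F''$, a maximal chain of (not necessarily closed) $\b$-stable subspaces, again via Zorn. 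Maximality of this inner chain together with Lemma~\ref{irreducible} forces $\dim G''_x/G'_x=1$; one has $G'_x\supsetneq F'$ (else $G''_x$ would be closed and $\b$-stable, strictly between $F'$ and $F''$, contradicting maximality of $\Ch_m$), so $\overline{G'_x}$ is a closed $\b$-stable subspace with $F'\subsetneq\overline{G'_x}\subset F''$, hence $\overline{G'_x}=F''$. Your alternative construction of a descending $\omega$-chain $F''=N_0\supset N_1\supset\cdots$ with $\bigcap_k N_k=F'$ is not justified --- the intersection need not be $F'$, and the order type of a maximal chain between $F'$ and $F''$ need not be $\omega^*$ --- and is in any case unnecessary.

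Two smaller points. First, the paper's proof of Theorem~\ref{main} is type-uniform and uses no isotropic subspaces; the isotropic sharpening for $\so_\infty$ and $\sp_\infty$ is deferred to Proposition~\ref{special}, so your case split by $\s_m$-type here is premature. Second, local solvability of $\bigcap_m\St_{\G_m}$ is more direct than you suggest: since
\[
\Big[\bigcap_m\St_{\G_m},\,\bigcap_m\St_{\G_m}\Big]\ \subset\ \Big(\bigcap_m\St_{\G_m}\Big)\cap[\g,\g]\ =\ \bigoplus_m\big(\St_{\G_m}\cap\s_m\big),
\]
and each summand is locally solvable by Proposition~\ref{locsolvable} applied to a maximal refinement of $\G_m$, the simultaneous stabilizer is locally solvable outright. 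No appeal to indecomposability to ``pin down the abelian part'' is needed; incidentally, the abelian complement $\a$ of $[\g,\g]$ is not finite-dimensional in general, contrary to what you write.
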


\begin{proof}
Let $\g$ be an infinite-dimensional indecomposable root-reductive Lie algebra, and let $\b \subset \g$ be a Borel subalgebra.  Let $[\g,\g] \cong \bigoplus_m \s_m$ be the decomposition into simple root-reductive Lie algebras, where $\s_m$ is isomorphic to one of $\sl_\infty$, $\so_\infty$, and $\sp_\infty$ for each $m$.  Let $V_m$ denote the standard representations of $\g$, as defined in section \ref{standardrep}.  For each $m$, let $\Ch_m$ be a maximal chain of closed $\b$-stable subspaces in $V_m$.  Take $\F_m := \fl(\Ch_m)$.

Let $F' \subset F''$ be any immediate predeccessor-successor pair in $\F_m$.  One can see immediately that there are no closed subspaces properly between $F'$ and $F''$.  Observe that $F''$ is closed, since it is obtained as the intersection of closed subspaces of $V_m$.  If $F'$ is not closed, then $\overline{F'} = F''$ because there are no closed subspaces properly between $F'$ and $F''$.  This implies that $\F_m$ is a closed generalized flag.

If $F'$ is closed, then $\dim(F'' / F')$ is either $1$ or infinite.  In detail, let $G$ be any $\b$-stable subspace $F' \subset G \subset F''$.  If $\dim F'' / F' < \infty$, then $\dim G / F' < \infty$, and hence $G$ is closed, which implies that $G$ is equal to either $F'$ or $F''$.  That is, if $\dim F'' / F' < \infty$, then $F'' / F'$ is an irreducible $\b$-module, hence $1$-dimensional.  Thus $\F_m$ is a bivalent closed generalized flag.

Let $\Dh_m$ be obtained from $\F_m$ by adding a maximal chain of $\b$-stable subspaces between every pair good $F' \subset F''$ with $\dim(F''/F') = \infty$.  Let $\G_m := \fl(\Dh_m)$.  Clearly $\G_m$ is a refinement of $\F_m$.  Let $0 \neq x \in V_m$ be such that $F'_x$ closed and $\dim F''_x / F'_x = \infty$.  The maximality of $\Dh_m$ implies $\dim G''_x / G'_x = 1$.  Moreover, $F'_x \subsetneq G'_x$, otherwise $G''_x$ would be a closed $\b$-stable subspace.  Similarly, $\overline{G'_x}$ is a closed $\b$-stable subspace with $F'_x \subsetneq \overline{G'_x} \subset F''_x$, and therefore $\overline{G'_x} = F''_x$.  Thus $\G_m$ is a Borel generalized flag refining $\F_m$.

Consider $\s_m \subset \gl(V_m,(V_m)_*)$.  Observe that the stabilizer in $\gl(V_m,(V_m)_*)$ of $\G_m$ is equal to the stabilizer in $\gl(V_m,(V_m)_*)$ of any maximal generalized flag refining $\G_m$, which by Proposition \ref{locsolvable} is locally solvable.  As a result, $\St_{\G_m} \cap \s_m$ is locally solvable.

Since each flag $\G_m$ is stable under $\b$, indeed $\b \subset \bigcap_m \St_{\G_m}$.
Calculate
\begin{eqnarray*}
[ \bigcap_m \St_{\G_m} ,  \bigcap_m \St_{\G_m}] & \subset & ( \bigcap_m \St_{\G_m}) \cap [\g,\g] \\
& = & \bigoplus_m (\St_{\G_m} \cap \s_m).
\end{eqnarray*}
Since each $\St_{\G_m} \cap \s_m$ is locally solvable, it follows that $\bigoplus_m (\St_{\G_m} \cap \s_m)$ is locally solvable.  Therefore $ \bigcap_m \St_{\G_m}$ is a locally solvable subalgebra of $\g$.  Since $\b$ is maximal locally solvable, finally $\b = \bigcap_m \St_{\G_m}$.
\end{proof}

The general case is resumed in Section \ref{general}.

\subsection{Isotropic subspaces in the standard representation of $\so_\infty$}

Suppose $\b \subset \so(V)$ is a Borel subalgebra.

\begin{prop} \label{somaximalisotropic}
A maximal $\b$-stable isotropic subspace of $V$ is maximal isotropic.
\end{prop}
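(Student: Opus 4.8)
The plan is to argue by contradiction: suppose $M \subset V$ is a $\b$-stable isotropic subspace which is maximal among $\b$-stable isotropic subspaces, but is not maximal isotropic. By the criteria recalled in the preliminaries, failure of maximal isotropy means either $M$ is not closed, or $M$ is closed but $\dim M^\bot / M \geq 2$ (in the symmetric case), respectively $M \neq M^\bot$ (in the antisymmetric case). The first step is to dispose of the non-closed case: if $M$ is not closed, then $\overline{M}$ is still isotropic (this is exactly the fact recorded in the preliminaries that the closure of an isotropic subspace is isotropic), and $\overline{M}$ is $\b$-stable because $\b$ preserves the pairing and $M$ is $\b$-stable, so $M^{\bot}$ and hence $M^{\bot\bot}=\overline{M}$ are $\b$-stable. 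Since $\overline{M} \supsetneq M$ this contradicts the maximality of $M$ among $\b$-stable isotropic subspaces. So we may assume $M$ is closed and properly contained in $M^\bot$ with $M^\bot/M$ of dimension at least $2$ (the antisymmetric case being subsumed once we know $\dim M^\bot/M \geq 1$, since then the argument below produces a strictly larger $\b$-stable isotropic subspace).

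The key construction is then to work inside the finite- or infinite-dimensional space $M^\bot / M$, which carries a nondegenerate form induced by $\langle \cdot,\cdot\rangle$ (this is where $M$ being closed and the original form being nondegenerate is used), and on which $\b$ acts, stabilizing $0$ and the whole space. I want to find a nonzero isotropic $\b$-stable subspace $\bar N \subset M^\bot/M$; its preimage $N$ in $M^\bot$ is then a $\b$-stable isotropic subspace of $V$ strictly containing $M$, contradicting maximality. To produce $\bar N$, consider a maximal chain of closed $\b$-stable subspaces of $M^\bot/M$ and take the generalized flag $\F$ it defines, exactly as in the proof of Theorem \ref{main}; this $\F$ is a bivalent closed generalized flag, and refining the infinite-codimension good pairs as there gives a Borel generalized flag $\G$ whose stabilizer is locally solvable. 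The point is that the bottom nonzero term of $\G$ (or an appropriate small $\b$-stable subspace near the bottom) gives a candidate, and one checks it can be taken isotropic by the standard fact that in a space with a nondegenerate symmetric or antisymmetric form, a one-dimensional subspace in the symmetric case need not be isotropic but a suitable modification is — more precisely, the bottom term $G''_x$ of the Borel flag, if it is one-dimensional and non-isotropic, still forces via $\b$-stability that its perpendicular inside $M^\bot/M$ is $\b$-stable of codimension one, and intersecting the chain terms produces an isotropic $\b$-stable line or plane.

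The main obstacle is precisely the last point: in the symmetric ($\so$) case a $\b$-stable line in $M^\bot/M$ need not be isotropic, so one cannot naively lift the bottom of a generalized flag. The resolution is that $\b$ restricted to the non-locally-solvable situation cannot act "too rigidly": if $M^\bot/M$ has dimension $\geq 2$ with a nondegenerate symmetric form and $\b$ acts with a non-isotropic stable line $\ell = \langle v \rangle$, then $\b$ also stabilizes $\ell^\bot$, and $\b$ acts on the remaining space $\ell^\bot$ (still of dimension $\geq 1$) preserving the restricted form; iterating, either we reach an isotropic $\b$-stable subspace or we exhibit $\b$ as containing an infinite orthogonal sum of $\so_{\geq 3}$-type pieces or otherwise a non-locally-solvable subalgebra in $\St_{\G}$ — contradicting Proposition \ref{locsolvable} style local solvability of the stabilizer. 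I expect the cleanest writeup lifts a Borel generalized flag $\G$ in $M^\bot/M$ to $V$, notes that the preimage of each $G \in \G$ together with the $\b$-stable isotropy forces the union $\bigcup G \in \G$ (pulled back to $V$) to be isotropic because the induced form on $M^\bot/M$ vanishes on it — which holds since $\G$ is built from a maximal chain of closed $\b$-stable subspaces and the form pairs $M^\bot/M$ with itself, making $(\bigcup G)^\bot \supset \bigcup G$ automatic once $\bigcup G$ is a proper closed $\b$-stable subspace equal to its own perpendicular or contained in it by maximality of the chain — and thereby obtain the desired strictly larger $\b$-stable isotropic subspace of $V$.
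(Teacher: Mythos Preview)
Your overall plan---reduce to $M$ closed, then find a nonzero $\b$-stable isotropic subspace of $M^\bot/M$---is reasonable and can be made to work, but the argument you give for the crucial step is not a proof. You correctly identify the obstacle: a $\b$-stable line in $M^\bot/M$ need not be isotropic in the symmetric case. However, neither of your two proposed resolutions holds up. The claim that iterating to non-isotropic lines would eventually exhibit a non-locally-solvable subalgebra is false: if $\b$ stabilizes a non-isotropic line $\ell = \C v$, then because $\b$ preserves the form one gets $2\chi(Y)\langle v,v\rangle = 0$, so $\b$ acts \emph{trivially} on $\ell$, and nothing non-solvable is produced. Your final paragraph asserting that $\bigcup_{G \in \G} G$ is automatically isotropic ``by maximality of the chain'' is simply unjustified; a maximal chain of $\b$-stable closed subspaces of a space with a nondegenerate form has no reason to consist of isotropic subspaces.

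What actually closes the gap along your line is the observation above: once $\b$ stabilizes a non-isotropic line $\ell_1$, it acts trivially on $\ell_1$ and stabilizes $\ell_1^\bot$; iterating once more yields a second $\b$-stable line $\ell_2 \subset \ell_1^\bot$, and if $\ell_2$ is also non-isotropic then $\b$ acts trivially on $\ell_1 \oplus \ell_2$, which (being two-dimensional nondegenerate) contains an isotropic line that is then automatically $\b$-stable. The paper takes a rather different route: instead of working in $M^\bot/M$, it writes an arbitrary $Y \in \b$ as an element of $V \otimes V$, uses a maximal $\b$-stable generalized flag between $M$ and $M^\bot$ together with Lemma~\ref{sointersection}, and exploits the antisymmetry $Y = -\sigma(Y)$ in a direct tensor computation to show $Y \in M \otimes V + V \otimes M$. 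This gives the stronger conclusion $\b \cdot M^\bot \subset M$, so that \emph{every} subspace between $M$ and $M^\bot$ is $\b$-stable; maximal isotropy of $M$ follows immediately. The paper's method is more computational but yields more, and the same template adapts cleanly to the $\sp$ case.
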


\begin{lemma} \label{sointersection}
Suppose $M \subset V$ is a maximal $\b$-stable isotropic subspace.  If $G$ is a $\b$-stable subspace with $M \subset G \subset M^\bot$, then $G \cap G^\bot = M$.
\end{lemma}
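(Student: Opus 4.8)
The plan is to recognize $G \cap G^\bot$ as a $\b$-stable isotropic subspace of $V$ wedged between $M$ and itself, and then to invoke the maximality of $M$. Note that this argument does not use Proposition \ref{somaximalisotropic}, so it remains available as an ingredient toward the proof of that proposition.

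First I would observe that $G^\bot$ is $\b$-stable. Since $\b \subseteq \so(V)$, every element of $\b$ acts on $V$ by a transformation that is skew with respect to $\langle \cdot , \cdot \rangle$; hence the orthogonal complement of a $\b$-stable subspace is again $\b$-stable, so $G^\bot$, and therefore $G \cap G^\bot$, is $\b$-stable. The subspace $G \cap G^\bot$ is also isotropic: if $x, y \in G \cap G^\bot$, then $x \in G^\bot$ and $y \in G$, so $\langle x, y \rangle = 0$.

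Second I would check that $M \subseteq G \cap G^\bot$. By hypothesis $M \subseteq G$. Applying $(-)^\bot$ to the inclusion $G \subseteq M^\bot$ reverses it to $M^{\bot\bot} \subseteq G^\bot$, and since $M \subseteq M^{\bot\bot} = \overline{M}$ (as recalled in Section 2) we obtain $M \subseteq G^\bot$. Hence $M \subseteq G \cap G^\bot$.

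Finally, $G \cap G^\bot$ is a $\b$-stable isotropic subspace of $V$ containing $M$; because $M$ is maximal among $\b$-stable isotropic subspaces, $G \cap G^\bot = M$, which is the assertion. There is no substantial obstacle here: the only points meriting a moment's care are the reversal of inclusions under $(-)^\bot$ and the use of $M \subseteq \overline{M}$, both routine consequences of the closure formalism established in the preliminaries.
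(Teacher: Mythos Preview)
Your proof is correct and follows essentially the same route as the paper's: both arguments establish that $G \cap G^\bot$ is a $\b$-stable isotropic subspace containing $M$ and then invoke the maximality of $M$. You simply spell out in more detail why $G^\bot$ is $\b$-stable and why $M \subset G^\bot$, where the paper just writes ``Observe that $M \subset G^\bot \subset M^\bot$.''
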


\begin{proof}[Proof of Lemma \ref{sointersection}.]
Observe that $M \subset G^\bot \subset M^\bot$.  Since $G \cap G^\bot$ is $\b$-stable and isotropic, and moreover $M \subset G \cap G^\bot$, the maximality of $M$ implies $G \cap G^\bot =M$.
\end{proof}

\begin{proof}[Proof of Proposition \ref{somaximalisotropic}.]
Since $M$ is isotropic, its closure $\overline{M}$ is also isotropic.  Moreover, $\overline{M}$ is stable under $\b$ because $M$ is stable under $\b$, by the $\g$-invariance of $\langle \cdot, \cdot \rangle$.  By the maximality of $M$, indeed $M$ is closed.

Let $\Ch$ be a maximal chain of $\b$-stable subspaces of $V$ between $M$ and $M^\bot$.  Let $\F := \fl (\Ch)$, so that $0 \subset M \subset \F \subset M^\bot \subset V$ is a generalized flag in $V$.  Write $\F = \{ F'_\alpha , F''_\alpha \}_{\alpha \in A}$.  By Lemma \ref{irreducible}, irreducible $\b$-modules are one dimensional, so it must be that $\dim F''_\alpha / F'_\alpha = 1$ for all $\alpha \in A$.

Suppose $Y \in \b$.  Since $Y$ stabilizes the generalized flag $0 \subset M \subset \F \subset M^\bot \subset V$, it follows that $Y \in M \otimes V + \sum_\alpha F''_\alpha \otimes (F'_\alpha)^\bot + V \otimes M$.  I will show that in fact $Y \in M \otimes V + V \otimes M$.

Now $Y =  \sum_i v_i \otimes w_i + Z$, for some $v_i \in F''_{\alpha_i} \setminus F'_{\alpha_i}$ and $w_i \in (F'_{\alpha_i})^\bot \setminus M$ and $Z \in M \otimes V +  V \otimes M$.  One may safely assume that the set $\{ v_i \}$ is linearly independent modulo $M$ and modulo $F'_\beta$ for all $\beta$.

Let $\sigma : V \otimes V \rightarrow V \otimes V$ denote the linear map which swaps the two factors.  Since $Y \in \bigwedge^2 V$, we calculate $-Y = \sigma(Y) =  \sum_i w_i \otimes v_i + \sigma(Z)$.  Hence
\begin{equation} \label{somadness}
\sum_i v_i \otimes w_i + w_i \otimes v_i = -Z - \sigma(Z).
\end{equation}
Looking at the left hand side of (\ref{somadness}), one can see that $\sum_i v_i \otimes w_i + w_i \otimes v_i$  is an element of $M^\bot \otimes M^\bot$, whereas the right hand side is an element of $M \otimes V + V \otimes M$.  Hence the right hand side of (\ref{somadness}) is an element of $(M^\bot \otimes M^\bot) \cap (M \otimes V + V \otimes M) = M \otimes M^\bot + M^\bot \otimes M$.

For each $i$ we have $w_i \in M^\bot \setminus M$, so there exists $\beta_i \in A$ such that $w_i \in F''_{\beta_i} \setminus F'_{\beta_i}$.  Since $w_i \in F''_{\beta_i} \cap (F'_{\alpha_i})^\bot$, Lemma \ref{sointersection} implies that $\beta_i \geq \alpha_i$.

Assume, for the sake of a contradiction, that $\sum_i v_i \otimes w_i$ is nonzero.  Let $\beta := 
\mathrm{max}_i \{ \beta_i \}$, where this set is nonempty by hypothesis.  So $\beta = \beta_1 = \cdots = \beta_k$ and $\beta > \beta_i$ for $i \neq 1 , \ldots , k$.  Meanwhile $\beta \geq \alpha_i$ for all $i$.  By assumption $\{ v_i  \}$ is linearly independent modulo $F'_\beta$, so in fact $\alpha_i = \beta$ for at most one $i$.

\begin{enumerate}
\item \label{one} First suppose that $\alpha_1 = \beta$.  Then $\alpha_i < \beta$ for $i \neq 1$, i.e. $v_i \in F'_\beta$ for $i \neq 1$.  Equation (\ref{somadness}) yields
\begin{eqnarray*}
v_1 \otimes w_1+ w_1 \otimes v_1 & = & - \sum_{i \neq 1} (v_i \otimes w_i + w_i \otimes v_i) - Z - \sigma(Z)\\
& \in & F'_\beta \otimes M^\bot + M^\bot \otimes F'_\beta. 
\end{eqnarray*}
This contradicts the fact that  $v_1, w_1 \in F''_\beta \setminus F'_\beta$.

\item Now suppose that $\alpha_i < \beta$ for all $i$.  For $i = 1 , \ldots , k$, there exist unique $b_i \in \C$ and $w_i' \in F'_\beta$ such that $w_i = b_i w_1 + w_i'$.  Then equation (\ref{somadness}) yields
\begin{eqnarray*}
w_1 \otimes (b_1 v_1 + \cdots + b_k v_k) & = & - \sum_{i=1}^k w_i' \otimes v_i - \sum_{i \neq 1 , \ldots , k} w_i \otimes v_i - \sum_{i} v_i \otimes w_i \\
&&- Z - \sigma(Z) \\
& \in & F'_\beta \otimes M^\bot + M^\bot \otimes M.
\end{eqnarray*}
Since $w_1 \notin F'_\beta$, it follows that $b_1 v_1 + \cdots + b_k v_k \in M$.  The fact that $b_1 = 1$ contradicts the assumption that the set $\{ v_i \}$ is linearly independent modulo $M$. 
\end{enumerate}

Either case leads to a contradiction.  Therefore $\sum_i v_i \otimes w_i = 0$, and $Y = Z \in M \otimes V + V \otimes M$.

Thus $Y \cdot M^\bot \subset M$.  Since $Y \in \b$ was arbitrary, indeed $\b \cdot M^\bot \subset M$.  Let $L$ be any isotropic subspace containing $M$.  Then $M \subset L \subset M^\bot$, so $L$ is stable under $\b$.  Since $M$ is a maximal $\b$-stable isotropic subspace, $L = M$.  Therefore $M$ is a maximal isotropic subspace.
\end{proof}

\begin{prop} \label{sochain}
There exists a maximal isotropic subspace $M \subset V$ which is stable under $\b$.  Furthermore, there exists a maximal chain $\Ch$ of closed $\b$-stable subspaces in $V$ containing $M$, with the additional property that $\Ch^\bot \subset \Ch$.  
\end{prop}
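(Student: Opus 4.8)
The plan is to obtain the maximal isotropic subspace $M$ from Zorn's lemma together with Proposition \ref{somaximalisotropic}, and then to build the required chain $\Ch$ by taking a maximal chain of closed $\b$-stable subspaces \emph{below} $M$ and reflecting it through the orthogonal complement. The point that makes this work, and that distinguishes the symmetric case from the situation in earlier sections, is that for a symmetric form a maximal isotropic subspace $M$ is closed and satisfies $\dim M^\bot / M \le 1$, so there is essentially no room for a closed $\b$-stable subspace strictly between $M$ and $M^\bot$.

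First I would produce $M$. By Zorn's lemma there is a maximal $\b$-stable isotropic subspace of $V$, since the union of a chain of $\b$-stable isotropic subspaces is again isotropic and, by the $\g$-invariance of $\langle\cdot,\cdot\rangle$, again $\b$-stable. By Proposition \ref{somaximalisotropic} such an $M$ is in fact maximal isotropic, which proves the first assertion. I then record the facts I will use: $M = \overline M$, the subspace $M^\bot$ is closed and $\b$-stable, and $\dim M^\bot / M \le 1$.

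Next, again by Zorn's lemma, I would fix a maximal chain $\Dh$ of closed $\b$-stable subspaces of $V$ contained in $M$ (so $0, M \in \Dh$), and set $\Ch := \Dh \cup \Dh^\bot$ where $\Dh^\bot = \{ D^\bot : D \in \Dh\}$. Each $D^\bot$ is closed, being an orthogonal complement, and is $\b$-stable; and since $D \subseteq M$ we have $M \subseteq M^\bot \subseteq D^\bot$, so every member of $\Dh$ is contained in every member of $\Dh^\bot$ and $\Ch$ is a chain of closed $\b$-stable subspaces. It contains $0$, $M$, $M^\bot$, and $V = 0^\bot$. Finally, since $D = \overline D = D^{\bot\bot}$ for $D \in \Dh$, one gets $\Ch^\bot = \Ch$, in particular $\Ch^\bot \subseteq \Ch$.

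It remains to check that $\Ch$ is a \emph{maximal} chain of closed $\b$-stable subspaces of $V$, and this is the main obstacle. Let $G$ be a closed $\b$-stable subspace comparable to every member of $\Ch$; I want $G \in \Ch$. If $M \subsetneq G \subsetneq M^\bot$, then $\dim M^\bot / M \ge 2$, contradicting $\dim M^\bot/M \le 1$; hence $G \subseteq M$ or $G \supseteq M^\bot$. In the first case $\Dh \cup \{G\}$ is a chain of closed $\b$-stable subspaces inside $M$, so $G \in \Dh$ by maximality of $\Dh$. In the second case $G^\bot \subseteq M^{\bot\bot} = M$ is closed and $\b$-stable, and for each $D \in \Dh$ comparability of $G$ with $D^\bot \in \Ch$ gives, after applying $\bot$ and using $D = D^{\bot\bot}$, comparability of $G^\bot$ with $D$; thus $\Dh \cup \{G^\bot\}$ is a chain of closed $\b$-stable subspaces inside $M$, so $G^\bot \in \Dh$ and hence $G = G^{\bot\bot} \in \Dh^\bot \subseteq \Ch$. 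The remaining verifications — the harmless edge cases $M = M^\bot$, $D = 0$, or $D = M$ — are routine, so the only genuinely delicate point is the reflection bookkeeping in this last paragraph, which closedness of all subspaces involved makes go through cleanly.
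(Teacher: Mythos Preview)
Your argument is correct. Both you and the paper begin identically, obtaining $M$ from Zorn's lemma and Proposition~\ref{somaximalisotropic}; the divergence is in the construction of $\Ch$. The paper applies Zorn's lemma directly to the poset of chains $\Ch$ of closed $\b$-stable subspaces with $M \in \Ch$ and $\Ch^\bot \subset \Ch$, and then shows that a maximal element of \emph{this} poset is already maximal among all chains of closed $\b$-stable subspaces: the key step is that whenever a closed $\b$-stable $D$ can be adjoined to such a $\Ch$, so can $D^\bot$, since comparability of $D$ with $M$, $M^\bot$, and each $C^\bot$ forces $D^\bot$ to fit. Your route instead builds $\Ch$ concretely as $\Dh \cup \Dh^\bot$ from a maximal chain $\Dh$ below $M$, and verifies maximality by the trichotomy $G \subseteq M$, $G \supseteq M^\bot$, or $M \subsetneq G \subsetneq M^\bot$ (the last case being vacuous). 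Your approach makes the symmetry $\Ch^\bot = \Ch$ explicit and foregrounds the role of $\dim M^\bot/M \le 1$; the paper's approach is slightly more abstract but rests on the same pairing $D \leftrightarrow D^\bot$, and also uses $\dim M^\bot/M \le 1$ (implicitly, in asserting that any $D$ comparable to $M$ and $M^\bot$ is isotropic or coisotropic). The two arguments are close cousins rather than genuinely distinct strategies.
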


\begin{proof}
As a corollary to Proposition \ref{somaximalisotropic}, there exists a maximal isotropic subspace $M \subset V$ which is stable under $\b$.   (Observe that $0$ is a $\b$-stable isotropic subspace of $V$, and that the union of nested $\b$-stable isotropic subspaces is a $\b$-stable isotropic subspace.  Hence there exists a subspace $M \subset V$ which is a maximal $\b$-stable isotropic subspace.  By Proposition \ref{somaximalisotropic}, $M$ is a maximal isotropic subspace of $V$.)  

Suppose $\Ch$ is a chain of closed $\b$-stable subspaces with $M \in \Ch$ and $\Ch^\bot \subset \Ch$.  Suppose further that $D$ is a closed $\b$-stable subspace such that $\Ch \cup \{ D \}$ is a chain.  Then $\Dh := \Ch \cup \{ D , D^\bot \}$ is a chain of closed $\b$-stable subspaces with $M \in \Dh$ and such that $\Dh^\bot \subset \Dh$.  To see that $\Dh$ is a chain, consider first the fact that since $M$ and $M^\bot$ are elements of $\Ch$, the subspace $D$ is either isotropic or coisotropic, i.e. either $D \subset D^\bot$ or $D^\bot \subset D$.  It remains to show that for any $C \in \Ch$, either $C \subset D^\bot$ or $D^\bot \subset C$.  But this follows immediately from the fact that either $D \subset C^\bot$ or $C^\bot \subset D$, since $C^\bot \in \Ch$ and $C$ is closed.  Hence a chain which is maximal with respect to chains $\Ch$ of closed $\b$-stable subspaces containing $M$ such that $\Ch^\bot \subset \Ch$ is in fact a maximal chain of closed $\b$-stable subspaces.
\end{proof}

\subsection{Isotropic subspaces in the standard representation of $\sp_\infty$}

Suppose $\b \subset \sp(V)$ is a Borel subalgebra.  The propositions in this section are completely analogous to those in the previous section, but their proofs admit significant simplifications.

\begin{prop} \label{spmaximalisotropic}
A maximal $\b$-stable isotropic subspace of $V$ is maximal isotropic.
\end{prop}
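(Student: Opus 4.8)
The plan is to mimic the structure of the proof of Proposition \ref{somaximalisotropic}, but exploit the symmetry (rather than antisymmetry) of the form, which is exactly the simplification the authors hint at. Let $M \subset V$ be a maximal $\b$-stable isotropic subspace. As in the $\so_\infty$ case, first observe that $\overline{M}$ is isotropic (closure of an isotropic subspace is isotropic) and $\b$-stable (by $\g$-invariance of the form), so maximality of $M$ forces $M$ to be closed. The goal is to show $\b \cdot M^\bot \subset M$, since then every isotropic subspace $L$ with $M \subset L \subset M^\bot$ — and every isotropic subspace containing $M$ lies in $M^\bot$ — is automatically $\b$-stable, whence $L = M$ by maximality, and $M = M^\bot$ is maximal isotropic (using the characterization for antisymmetric forms quoted in the preliminaries).

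To prove $\b \cdot M^\bot \subset M$, I would build, via Lemma \ref{irreducible}, a maximal generalized flag $\F = \{F'_\alpha, F''_\alpha\}_{\alpha \in A}$ inside $V$ passing through $M$ and $M^\bot$, obtained from a maximal chain of $\b$-stable subspaces between $M$ and $M^\bot$; each good pair then has codimension $1$ because irreducible $\b$-modules are one-dimensional. Any $Y \in \b$ stabilizes $0 \subset M \subset \F \subset M^\bot \subset V$, so $Y \in M \otimes V + \sum_\alpha F''_\alpha \otimes (F'_\alpha)^\bot + V \otimes M$; write $Y = \sum_i v_i \otimes w_i + Z$ with $v_i \in F''_{\alpha_i} \setminus F'_{\alpha_i}$, $w_i \in (F'_{\alpha_i})^\bot \setminus M$, $Z \in M \otimes V + V \otimes M$, and the $v_i$ linearly independent modulo $M$ and modulo each $F'_\beta$. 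Now since $\sp(V) = \Sym^2 V$, we have $\sigma(Y) = Y$ (not $-Y$), so applying the swap $\sigma$ and subtracting gives
\begin{equation*}
\sum_i v_i \otimes w_i - w_i \otimes v_i = \sigma(Z) - Z \in M \otimes V + V \otimes M,
\end{equation*}
and the left side lies in $M^\bot \otimes M^\bot$, so it lies in $(M^\bot \otimes M^\bot) \cap (M \otimes V + V \otimes M) = M \otimes M^\bot + M^\bot \otimes M$.

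From here the argument runs exactly as in the $\so_\infty$ proof: picking $\beta_i \in A$ with $w_i \in F''_{\beta_i} \setminus F'_{\beta_i}$, Lemma \ref{sointersection} (whose proof only uses that $M$ is a maximal $\b$-stable isotropic subspace, so it applies verbatim here) gives $\beta_i \geq \alpha_i$; assuming $\sum_i v_i \otimes w_i \neq 0$, setting $\beta = \max_i \beta_i$ and splitting into the case $\alpha_1 = \beta$ versus $\alpha_i < \beta$ for all $i$ produces a contradiction with the linear independence of the $v_i$ modulo $M$ and modulo $F'_\beta$ — the only change being the sign in the displayed relation, which does not affect the containments used. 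Hence $\sum_i v_i \otimes w_i = 0$, so $Y = Z \in M \otimes V + V \otimes M$, i.e. $Y \cdot M^\bot \subset M$. The main obstacle — such as it is — is purely bookkeeping: checking that Lemma \ref{sointersection} and the two-case contradiction carry over unchanged when $\sigma(Y) = Y$ rather than $\sigma(Y) = -Y$. In fact the symmetric case is strictly easier because the conclusion $M = M^\bot$ is cleaner than the $\dim M^\bot/M \leq 1$ bound needed for $\so_\infty$; one could alternatively phrase the whole thing by noting that the antisymmetric-case proof never genuinely used antisymmetry beyond the bare fact that $\sigma(Y) = \pm Y$, so a single unified lemma would suffice, but presenting the $\sp_\infty$ version separately keeps the exposition transparent.
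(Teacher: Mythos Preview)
There is a genuine gap. You are right that Lemma \ref{sointersection} carries over verbatim and yields $\beta_i \geq \alpha_i$, and you are right that Case (2) of the contradiction argument survives the sign change. But Case (1) does \emph{not}: the sign is precisely what drives that case. When $\alpha_1 = \beta = \beta_1$, both $v_1$ and $w_1$ lie in $F''_\beta \setminus F'_\beta$, and since $\dim F''_\beta / F'_\beta = 1$ we have $w_1 = c v_1 + f$ with $c \neq 0$ and $f \in F'_\beta$. In the $\so$ case the left-hand side is $v_1 \otimes w_1 + w_1 \otimes v_1 = 2c\, v_1 \otimes v_1 + (v_1 \otimes f + f \otimes v_1)$, and the diagonal term $2c\, v_1 \otimes v_1$ visibly fails to lie in $F'_\beta \otimes M^\bot + M^\bot \otimes F'_\beta$. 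In the $\sp$ case the left-hand side is $v_1 \otimes w_1 - w_1 \otimes v_1 = v_1 \otimes f - f \otimes v_1$, which \emph{does} lie in $F'_\beta \otimes M^\bot + M^\bot \otimes F'_\beta$, and no contradiction follows. So your two-case argument collapses.

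The paper repairs this not by salvaging Case (1) but by eliminating it: it replaces Lemma \ref{sointersection} with the sharper Lemma \ref{spintersection}, which asserts $F''_{\alpha} \cap (F'_{\alpha})^\bot = M$ for each codimension-one pair $F'_\alpha \subset F''_\alpha$. This immediately forces the \emph{strict} inequality $\beta_i > \alpha_i$, so $\alpha_1 = \beta$ is impossible and only Case (2) remains. The extra strength of Lemma \ref{spintersection} comes from exactly the feature of $\sp(V)$ you glossed over: the form is antisymmetric, so $\langle x, x \rangle = 0$ automatically and $M \oplus \C x$ is isotropic whenever $x \in M^\bot$, which is what upgrades $G' \cap (G')^\bot = M$ to $G'' \cap (G')^\bot = M$. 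In short, the simplification the authors hint at is not that antisymmetry is irrelevant, but that antisymmetry buys you strict inequality and hence lets you skip Case (1) entirely.
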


\begin{lemma} \label{spintersection}
Suppose $M \subset V$ is a maximal $\b$-stable isotropic subspace.  If $G'$ and $G''$ are $\s$-stable subspaces with $M \subset G' \subset G'' \subset M^\bot$ and $\dim G'' / G' = 1$, then $G'' \cap (G')^\bot = M$.
\end{lemma}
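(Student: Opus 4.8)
The plan is to exhibit $K := G'' \cap (G')^{\bot}$ as a $\b$-stable isotropic subspace with $M \subseteq K \subseteq M^{\bot}$, so that the maximality of $M$ forces $K = M$; I read the hypothesis as saying that $G'$ and $G''$ are stable under $\b$.

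First I would collect the elementary containments. The subspace $(G')^{\bot}$ is $\b$-stable because $G'$ is and $\langle \cdot, \cdot \rangle$ is $\g$-invariant, so $K$ is $\b$-stable. Since $G' \subseteq M^{\bot}$, applying $\bot$ gives $M \subseteq M^{\bot\bot} \subseteq (G')^{\bot}$, and since $M \subseteq G' \subseteq G''$ we have $M \subseteq G''$; hence $M \subseteq K$. The containment $K \subseteq G'' \subseteq M^{\bot}$ is immediate.

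The crux is a codimension count that uses the hypothesis $\dim G''/G' = 1$. Because $G' \subseteq G''$, one has $K \cap G' = G' \cap (G')^{\bot}$, and this subspace is $\b$-stable, isotropic (it lies in $G'$, so its perpendicular contains $(G')^{\bot}$), and contains $M$; by the maximality of $M$ it equals $M$. This is precisely the assertion of Lemma \ref{sointersection}, whose proof made no use of the symmetry of the form. Consequently $\dim K/M = \dim (K+G')/G' \leq \dim G''/G' = 1$. If $K \neq M$, I would write $K = M + \C v$ with $v \in K \setminus M$; then $v \in (G')^{\bot}$ and $M \subseteq G'$ give $\langle v, m \rangle = \langle m, v \rangle = 0$ for all $m \in M$, the antisymmetry of the form gives $\langle v, v \rangle = 0$, and $M$ is isotropic, so $K = M + \C v$ is isotropic. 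This contradicts the maximality of $M$, since $K$ is $\b$-stable and strictly larger than $M$. Therefore $K = M$.

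I expect the one point that needs care is this middle step: it is the codimension-one hypothesis that forces $K/M$ to have dimension at most one, which is exactly what lets antisymmetry close the argument in the last step. No closedness of $M$ is needed, and the argument is genuinely shorter than the $\so$ case precisely because for a symmetric form $\langle v, v \rangle$ need not vanish — this is the extra difficulty dealt with in Proposition \ref{somaximalisotropic}.
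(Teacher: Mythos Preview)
Your proof is correct and follows essentially the same approach as the paper: first reduce to $G' \cap (G')^\bot = M$ via the maximality of $M$ (as in Lemma~\ref{sointersection}), then use $\dim G''/G' = 1$ to see that $G'' \cap (G')^\bot$ exceeds $M$ by at most a line, and finally use antisymmetry to show any such line would yield a larger $\b$-stable isotropic subspace. Your reading of ``$\s$-stable'' as ``$\b$-stable'' is the intended one.
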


\begin{proof}[Proof of Lemma \ref{spintersection}.]
Observe that $M \subset (G')^\bot \subset M^\bot$.  Since $G' \cap (G')^\bot$ is $\b$-stable and isotropic, and moreover it contains $M$, the maximality of $M$ implies $G' \cap (G')^\bot =M$.  The inclusion $M = G' \cap (G')^\bot \subset G'' \cap (G')^\bot$ has codimension $0$ or $1$.  Suppose, for the sake of a contradiction, that $G'' \cap (G')^\bot = M \oplus \C x$.  Then $x \in M^\bot$, and $\langle x , x \rangle = 0$ since the pairing $\langle \cdot , \cdot \rangle$ is antisymmetric.  Hence $\langle M \oplus \C x , M \oplus \C x \rangle = 0$, and $M \oplus \C x$ is isotropic.  It is also $\b$-stable.  This contradicts the maximality of $M$.  Hence $G'' \cap (G')^\bot = M$.
\end{proof}

\begin{proof}[Proof of Proposition \ref{spmaximalisotropic}.]
Let $\F = \{F'_\alpha, F''_\alpha \}_\alpha$ be defined in the same fashion as in the proof of Proposition \ref{somaximalisotropic}.  Suppose $Y \in \b$.  Again, $Y \in M \otimes V + \sum_\alpha F''_\alpha \otimes (F'_\alpha)^\bot + V \otimes M$, so $Y =  \sum_i v_i \otimes w_i + Z$, for some $v_i \in F''_{\alpha_i} \setminus F'_{\alpha_i}$ and $w_i \in (F'_{\alpha_i})^\bot \setminus M$ and $Z \in M \otimes V +  V \otimes M$.  One may safely assume that the set $\{ v_i \}$ is linearly independent modulo $M$ and modulo $F'_\beta$ for all $\beta$.
  
Since $Y \in \Sym^2(V)$, we calculate $Y = \sigma(Y) =  \sum_i w_i \otimes v_i + \sigma(Z)$.  Hence
\begin{equation} \label{spmadness}
\sum_i v_i \otimes w_i - w_i \otimes v_i = Z - \sigma(Z).
\end{equation}
As in the proof of Proposition \ref{somaximalisotropic}, the right hand side of (\ref{spmadness}) is an element of $M \otimes M^\bot + M^\bot \otimes M$.

For each $i$ we have $w_i \in F''_{\beta_i} \setminus F'_{\beta_i}$, where $M \subset F'_{\beta_i} \subset F''_{\beta_i} \subset M^\bot$.  Since $w_i \in F''_{\beta_i} \cap (F'_{\alpha_i})^\bot$, we obtain from Lemma \ref{spintersection} that $\beta_i > \alpha_i$.  The rest of the proof follows the same outline as the proof of Proposition \ref{somaximalisotropic}, with the simplification that Case (\ref{one}) has already been ruled out.
\end{proof}

\begin{prop} \label{spchain}
There exists a maximal isotropic subspace $M \subset V$ which is stable under $\b$.  Furthermore, there exists a maximal chain $\Ch$ of closed $\b$-stable subspaces in $V$ containing $M$, with the additional property that $\Ch^\bot \subset \Ch$.
\end{prop}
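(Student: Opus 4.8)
The plan is to follow, essentially verbatim, the two-step strategy used for Proposition \ref{sochain}, now invoking Proposition \ref{spmaximalisotropic} in place of Proposition \ref{somaximalisotropic}. The only structural difference is that in the symplectic setting a maximal isotropic subspace $M$ satisfies $M = M^\bot$ exactly, which if anything streamlines the bookkeeping.

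First I would produce the $\b$-stable maximal isotropic subspace by a Zorn's lemma argument: the zero subspace is $\b$-stable and isotropic, and the union of any chain of $\b$-stable isotropic subspaces is again $\b$-stable (by the $\g$-invariance of the form) and isotropic, so there exists a maximal $\b$-stable isotropic subspace $M$. By Proposition \ref{spmaximalisotropic} this $M$ is in fact maximal isotropic in $V$, hence $M = M^\bot$, and in particular $M$ is closed.

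Second, to construct $\Ch$, I would apply Zorn's lemma to the poset of chains of closed $\b$-stable subspaces of $V$ that contain $M$ and satisfy $\Ch^\bot \subseteq \Ch$; the union of an increasing family of such chains is again such a chain, so a maximal element $\Ch$ exists. The key point, exactly as for Proposition \ref{sochain}, is that such a maximal $\Ch$ is automatically a maximal chain of closed $\b$-stable subspaces: if $D$ is a closed $\b$-stable subspace comparable with every member of $\Ch$, then $\Dh := \Ch \cup \{ D, D^\bot \}$ is still a chain with $M \in \Dh$ and $\Dh^\bot \subseteq \Dh$, so $D, D^\bot \in \Ch$ already. To see $\Dh$ is a chain, one uses that $M = M^\bot \in \Ch$ forces $D$ to be comparable with $M$, hence isotropic or coisotropic; and for any $C \in \Ch$, comparability of $D^\bot$ with $C$ follows from comparability of $D$ with $C^\bot \in \Ch$ upon taking perpendiculars, using that $C$ is closed.

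I do not anticipate a genuine obstacle here: the symplectic hypothesis enters only through Proposition \ref{spmaximalisotropic}, which is already in hand, and through the identity $M = M^\bot$, which merely simplifies the comparability argument relative to the orthogonal case. If there is a subtlety to be careful about, it is the verification that $\Dh$ remains a chain (rather than merely a set of mutually comparable-with-$\Ch$ subspaces), but this is precisely the routine perpendicular-taking manipulation carried out in the proof of Proposition \ref{sochain}.
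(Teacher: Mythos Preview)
Your proposal is correct and follows essentially the same approach as the paper, which simply states that the proof is identical to that of Proposition~\ref{sochain}. Your observation that $M = M^\bot$ in the symplectic case slightly streamlines the comparability check, but otherwise the Zorn's-lemma-plus-perpendicular-closure argument you outline is exactly the paper's method.
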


The proof is identical to that of Proposition \ref{sochain}.

\subsection{Maximal closed generalized flags in the standard representation}

The following proposition is an improvement of Theorem \ref{main} in the special cases of the infinite-dimensional simple root-reductive Lie algebras.

\begin{prop} \label{special}
Any Borel subalgebra of $\sl(V,V_*)$ is the stabilizer of a maximal closed generalized flag in $V$.  Any Borel subalgebra of $\so(V)$ or $\sp(V)$ is the stabilizer of a maximal closed generalized flag $\F$ in $V$ with $\F \cup \F^\bot \cup \{M , M^\bot \}$ a chain for some maximal isotropic subspace $M \subset V$.
\end{prop}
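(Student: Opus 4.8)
The plan is to reduce the three cases to Theorem \ref{main} together with the structural results already established for $\so$ and $\sp$. By Theorem \ref{main}, a Borel subalgebra $\b$ of any of $\sl(V,V_*)$, $\so(V)$, $\sp(V)$ is the stabilizer of a Borel generalized flag $\G$ in the standard representation $V$ (for these simple algebras there is only one standard representation up to the obvious duality). So the task is to show that $\G$ can be taken to be a maximal closed generalized flag, and in the isotropic cases that it can be chosen compatibly with a $\b$-stable maximal isotropic subspace.

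For $\sl(V,V_*)$ I would argue as follows. Start with the bivalent closed generalized flag $\F$ and its Borel refinement $\G$ produced in the proof of Theorem \ref{main}, coming from a maximal chain $\Ch$ of closed $\b$-stable subspaces of $V$. The key point is that a \emph{good pair} $F' \subset F''$ of infinite codimension cannot actually occur: if it did, then $\b = \St_\G$ would, by the formula $\St_\G = \sum_\alpha G''_\alpha \otimes (G'_\alpha)^\bot$, contain elements acting on $F''/\overline{G'}$, and one could enlarge $\Ch$ (e.g. by adjoining $F''^{\bot\bot}$-type subspaces or by exploiting that $\St_\F$ strictly contains $\St_\G$ in the $\gl$ picture but the $\sl$-parts still differ) to get a strictly larger locally solvable subalgebra — contradicting maximality of $\b$. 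Once every good pair has codimension $1$, $\F$ itself is a maximal closed generalized flag, $\G = \F$, and $\b = \St_\F$. The cleanest way to run this is to show directly that if $\Ch$ is a maximal chain of closed $\b$-stable subspaces then $\fl(\Ch)$ is already maximal closed, because otherwise the stabilizer of a refinement would be a strictly larger locally solvable subalgebra stabilizing a strictly smaller chain of closed subspaces, contradicting maximality of $\Ch$ (all $\b$-stable since $\b \subset \St$).

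For $\so(V)$ and $\sp(V)$ the extra input is Propositions \ref{sochain} and \ref{spchain}: there is a maximal isotropic $\b$-stable subspace $M \subset V$ and a maximal chain $\Ch$ of closed $\b$-stable subspaces with $M \in \Ch$ and $\Ch^\bot \subset \Ch$. Set $\F := \fl(\Ch)$. By the same argument as in the $\sl$ case, maximality of $\Ch$ forces every good pair of $\F$ to have codimension $1$, so $\F$ is a maximal closed generalized flag; and since $M, M^\bot \in \Ch$ and $\Ch^\bot \subset \Ch$, the set $\F \cup \F^\bot \cup \{M, M^\bot\}$ is a chain (one checks $\F^\bot$ arises from $\Ch^\bot \subset \Ch$, so everything sits inside the one totally ordered family $\Ch$). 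It remains to identify $\b$ with $\St_\F$ rather than merely to contain it in $\St_\F$: here I would invoke that $\St_\F \cap \so(V)$ (resp. $\cap\, \sp(V)$) is locally solvable — which follows from Proposition \ref{locsolvable} applied to any maximal refinement of $\F$, exactly as in the proof of Theorem \ref{main} — and then maximality of $\b$ gives $\b = \St_\F \cap \so(V)$.

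The main obstacle I anticipate is ruling out infinite-codimension good pairs, i.e. showing $\G = \F$ rather than a proper refinement. The subtlety is that passing from $\gl(V,V_*)$ to $\sl(V,V_*)$ (or to $\so$, $\sp$) could in principle kill the ``extra'' elements that distinguish $\St_\G$ from $\St_\F$, so one cannot simply quote the $\gl_\infty$ computation of \cite{DP3}; one must check that even the traceless (resp. (co)isotropic) part of $\St_\F$ strictly exceeds that of $\St_\G$ when a good pair has infinite codimension. I expect this to come down to exhibiting an explicit nilpotent element of $\St_\F \cap \s$ not lying in $\St_\G$ — built from vectors $v \in F''$, $\varphi \in (F')^\bot$ with $\langle v, \varphi\rangle = 0$ (using the codimension to arrange tracelessness / the symmetry or antisymmetry to stay in $\so$ or $\sp$) — and concluding via maximality of $\b$ (equivalently of $\Ch$). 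Everything else — closedness, bivalence, the chain condition with $M$, and local solvability of the stabilizer — is either immediate from the definitions or already packaged in Proposition \ref{locsolvable}, Theorem \ref{main}, and Propositions \ref{sochain}--\ref{spchain}.
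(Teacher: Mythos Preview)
Your overall architecture is correct and matches the paper: invoke Theorem~\ref{main} together with Propositions~\ref{sochain}/\ref{spchain} to get a maximal chain $\Ch$ of closed $\b$-stable subspaces (with $M\in\Ch$ and $\Ch^\bot\subset\Ch$ in the isotropic cases), set $\F=\fl(\Ch)$, verify the chain condition $\F\cup\F^\bot\cup\{M,M^\bot\}$, and conclude $\b=\St_\F$ by local solvability and maximality. The only substantive step is ruling out infinite-codimension good pairs, and here your mechanism differs from the paper's.

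You propose to exhibit an element of $\St_\F\cap\s$ not in $\St_\G=\b$, contradicting maximality of $\b$. This can be made to work, but it is exactly the obstacle you flag: one must produce such an element separately in $\sl$, $\so$, $\sp$ and check tracelessness or (anti)symmetry by hand. The paper sidesteps this entirely. It observes that for every refinement pair $G'_\beta\subset G''_\beta$ inside the offending good pair one has $\overline{G'_\beta}=F''$, hence $(G'_\beta)^\bot=(F'')^\bot$; the stabilizer in $\gl(V,V_*)$ of $0\subset F'\subset\G\subset F''\subset V$ therefore collapses to $F'\otimes V_* + V\otimes (F'')^\bot$. Since $\b$ sits inside this, one gets $\b\cdot F''\subset F'$ uniformly, with no reference to $\sl/\so/\sp$. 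Then every subspace between $F'$ and $F''$ is $\b$-stable, and in particular $F'\oplus\C x$ for $x\in F''\setminus F'$ is a closed $\b$-stable subspace strictly between $F'$ and $F''$, contradicting maximality of $\Ch$. So the contradiction is with maximality of the \emph{chain}, not of $\b$, and the computation lives entirely in $\gl$.

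One genuine slip: your ``cleanest way'' paragraph has the inequality backwards. Refining a generalized flag \emph{shrinks} its stabilizer, it does not enlarge it; so ``the stabilizer of a refinement would be a strictly larger locally solvable subalgebra'' is false as written, and the sentence about ``a strictly smaller chain of closed subspaces'' does not parse either (the refinement $\G$ adds only non-closed subspaces inside the good pair). The workable version of your idea is the one you state later---produce an explicit element of $\St_\F\setminus\St_\G$---but the paper's $\b\cdot F''\subset F'$ argument is both shorter and case-free.
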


\begin{proof}
If $\g = \sl(V,V_*)$, let $\Ch$ be a maximal chain of closed $\b$-stable subspaces in $V$.  If $\g = \so(V)$, let $M$ be a $\b$-stable maximal isotropic subspace in $V$, and let $\Ch$ be a maximal chain of closed $\b$-stable subspaces in $V$, with $M \in \Ch$ and $\Ch^\bot \subset \Ch$, as in Proposition \ref{sochain}.  If $\g = \sp(V)$,  let $M$ be a $\b$-stable maximal isotropic subspace in $V$, and let $\Ch$ be a maximal chain of closed $\b$-stable subspaces in $V$, with $M \in \Ch$ and $\Ch^\bot \subset \Ch$, as in Proposition \ref{spchain}.

Let $\F : = \fl(\Ch)$, as in the proof of Theorem \ref{main}.  Observe that if $\g$ is one of $\so(V)$ and $\sp(V)$, then $\F \cup \F^\bot \cup \{M , M^\bot \}$ is a chain.  That is, the maximality of $\Ch$ implies that $\F \cup \Ch$ is a chain, and that $\F^\bot \subset \Ch$.  Since $M , M^\bot \in \Ch$, indeed $\F \cup \F^\bot \cup \{ M , M^\bot \}$ is a chain.

We will show that $\F$ is a maximal closed generalized flag.  By the proof of Theorem \ref{main}, $\F$ is a bivalent closed generalized flag, so it remains to show that every good pair of $\F$ has codimension $1$.

Suppose, for the sake of a contradiction, that there exists a good pair $F' \subset F''$ of $\F$ with $\dim(F''/F') = \infty$.  Let $\Dh$ be a maximal chain of $\b$-stable subspaces between $F'$ and $F''$, and let $\G := \fl(\Dh)$.  Consider $\G = \{ G'_\beta, G''_\beta \}_{\beta}$.  It was seen in the proof of Theorem \ref{main} that $\overline{G'_\beta} = F''$ for all $\beta$.  That is, $(G'_\beta)^\bot = (F'')^\bot$ for all $\beta$.

Of course $\b$ stabilizes the generalized flag $0 \subset F' \subset \G \subset F'' \subset V$.  Now consider $\g \subset \gl(V, V_*)$, where in the isotropic cases $V_* = V$.  The stabilizer in $\gl(V,V_*)$ of the generalized flag $0 \subset F' \subset \G \subset F'' \subset V$ is 
\begin{eqnarray*}
F'  \otimes V_* + \sum_\beta G''_\beta \otimes (G'_\beta)^\bot + V \otimes (F'')^\bot & =  & F' \otimes V_* + \sum_\beta G''_\beta \otimes (F'')^\bot + V \otimes (F'')^\bot \\
& =  & F' \otimes V_* + V \otimes (F'')^\bot.
\end{eqnarray*}
Hence $\b \cdot F'' \subset F'$, i.e. $\b$ stabilizes any subspace between $F'$ and $F''$.  This contradicts the fact that there are no closed $\b$-stable subspaces between $F'$ and $F''$.

This concludes the proof that $\F$ is a maximal closed generalized flag in $V$.  It was previously noted that if  $\g$ is $\so(V)$ or $\sp(V)$, then $\F \cup \F^\bot \cup \{M, M^\bot \}$ is a chain.  Moreover, the proof of Theorem \ref{main} gives that $\b = \St_\F$.
\end{proof}

\section{Borel subalgebras}\label{borels}

\subsection{Borel subalgebras of $\sl_\infty$}\label{slsection}

In this section it is shown that Borel subalgebras of $\sl_\infty$ correspond to maximal closed generalized flags in the standard representation.  Let $\b \subset \sl(V,V_*)$ be a Borel subalgebra.  Here we denote by $\St_\F$ the stabilizer in $\sl(V,V_*)$ of any generalized flag $\F$ in $V$.

\begin{lemma} \label{slcyclic}
Let $\F$  be a maximal closed generalized flag in $V$.  For any $u \in V$, 
$$\St_\F \cdot u = 
\begin{cases}
F'_u & \overline{F'_u} = F''_u ;\\
F'_u & F'_u \subset F''_u \textrm{ is the only good pair of } \F; \\
F''_u & \textrm{otherwise}.
\end{cases}$$
\end{lemma}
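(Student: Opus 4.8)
The plan is to compute $\St_\F \cdot u$ directly from the formula $\St_\F = \sum_\alpha F''_\alpha \otimes (F'_\alpha)^\bot$ for the stabilizer in $\gl(V,V_*)$, then intersect with $\sl(V,V_*)$. First I would observe that an element $\phi = \sum_\alpha x_\alpha \otimes y_\alpha \in \St_\F$ sends $u$ to $\sum_\alpha \langle u, y_\alpha\rangle x_\alpha$, and since each $x_\alpha$ may be taken to lie in $F''_\alpha$, every element of $\St_\F \cdot u$ lies in $F''_u$: indeed if $\langle u, y_\alpha \rangle \neq 0$ then $u \notin (F'_\alpha)^{\bot\bot} = \overline{F'_\alpha}$, so $u \notin F'_\alpha$ (as $F'_\alpha \subset \overline{F'_\alpha}$), hence $F'_\alpha \subsetneq F''_\alpha$ forces, by maximality/closedness, $F''_\alpha \subseteq F''_u$. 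So in all cases $\St_\F \cdot u \subseteq F''_u$, and it remains to pin down exactly which subspace we get.

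The second step is to get the containment $F'_u \subseteq \St_\F \cdot u$ in all cases, and then decide whether we can reach all of $F''_u$ or only $F'_u$. For the lower bound: pick any $w \in F'_u$; I want $\phi \in \St_\F$ with $\phi(u) = w$. Choose $y \in V_*$ with $\langle u, y\rangle = 1$ and $y \in (F'_w)^\bot$ — this is possible because $u \notin \overline{F'_w}$ (since $w \in F''_w$ means $w \notin F'_w$, and $F'_w \subsetneq F'_u$-ish ordering: more carefully, $F'_w \subsetneq F''_w \subseteq F'_u$, and $u \notin F'_u$... one has to track the chain positions), so then $w \otimes y \in F''_w \otimes (F'_w)^\bot \subseteq \St_\F$ and maps $u \mapsto w$. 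To land in $\sl$ rather than $\gl$, note $w \otimes y$ is already traceless when $w \notin \langle \cdot, y\rangle$-eigendirection, i.e. when $\langle w, y\rangle = 0$, which we can arrange; otherwise correct by an element of the nilpotent radical that kills $u$. This gives $F'_u \subseteq \St_\F \cdot u$ always.

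The third and decisive step is the dichotomy: when is $\St_\F \cdot u$ all of $F''_u$ versus only $F'_u$? The element $u$ itself lies in $F''_u \setminus F'_u$, so the question is whether some $\phi \in \St_\F$ has $\phi(u) \notin F'_u$, equivalently $\phi(u) = u + (\text{something in } F'_u)$, i.e. whether the "diagonal" rank-one operator on the line $F''_u/F'_u$ survives the traceless constraint. Here the two exceptional cases enter. If $\overline{F'_u} = F''_u$, then for any $y$ with $\langle u, y \rangle \neq 0$ one cannot have $y \in (F'_u)^\bot$, so the only pieces of $\St_\F$ that act nontrivially on $u$ come from $F''_\alpha \otimes (F'_\alpha)^\bot$ with $F''_\alpha \subsetneq F''_u$, hence land in $F'_u$: so $\St_\F \cdot u = F'_u$. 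If instead $\overline{F'_u} \subsetneq F''_u$ but $F'_u \subset F''_u$ is the \emph{only} good pair, then the rank-one "diagonal" operator $u \otimes y$ (with $\langle u,y\rangle = 1$, $y \in (F'_u)^\bot$) has nonzero trace $1$, and there is no other good pair to compensate against it within $\sl$ — any traceless combination must cancel that trace using $F'_u$-valued or $(F'_u)^\bot$-annihilated pieces, leaving the image inside $F'_u$; so again $\St_\F \cdot u = F'_u$. In the remaining (generic) case there is a second good pair $F'_\gamma \subsetneq F''_\gamma$, giving a second diagonal operator $x \otimes z$ with trace $1$; then $u \otimes y - x \otimes z \in \sl$, acts on $u$ as $u \mapsto u - \langle u, z \rangle x$; choosing $z \in (F'_\gamma)^\bot$ with appropriate value and $x \in F''_\gamma$, and combining with the already-available maps onto $F'_u$, one produces an element of $\St_\F \cap \sl(V,V_*)$ sending $u$ to $u$ plus an arbitrary element of $F'_u$, whence $\St_\F \cdot u = F''_u$.

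The main obstacle I expect is the bookkeeping in the third step: carefully verifying, in the two exceptional cases, that the traceless condition genuinely confines the image to $F'_u$ (one must argue about \emph{all} elements of $\St_\F$ simultaneously, using that the trace of $\sum_\alpha x_\alpha \otimes y_\alpha$ is $\sum_\alpha \langle x_\alpha, y_\alpha \rangle$ and that the only way to get $\langle u, y_\alpha \rangle \neq 0$ with $x_\alpha$ contributing outside $F'_u$ is via the pair at $u$ itself), and dually, in the generic case, constructing the compensating element using a second good pair while respecting that $z$ must lie in $(F'_\gamma)^\bot$ and be paired correctly against $u$. The positions of $F'_\gamma, F''_\gamma$ relative to $F''_u$ split into subcases (the second good pair could be below or above $u$), and each needs the right choice of $x$ and $z$; this is routine but requires care with the chain ordering.
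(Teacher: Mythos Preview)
Your approach is correct and complete in outline, though organized differently from the paper's proof.  The paper does not proceed via upper/lower bounds on $\St_\F \cdot u$ followed by a dichotomy; instead it splits into three cases according to whether there exists $\alpha$ with $(F'_\alpha)^\bot \cap u^\bot \nsubseteq (F''_\alpha)^\bot$.  When such an $\alpha$ exists, the paper picks $y \in (F'_\alpha)^\bot \cap u^\bot$ with $y \notin (F''_\alpha)^\bot$ and $x \in F''_\alpha$ with $\langle x,y\rangle=1$, so that $x\otimes y$ has trace~$1$ and kills $u$; then $\St_\F$ is spanned by $v\otimes w - \langle v,w\rangle\, x\otimes y$, and the action on $u$ is computed in one stroke as $\bigcup_{u\notin\overline{F'_\alpha}} F''_\alpha$.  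The remaining cases are (i) no good pairs at all, where $\St_{\F,\gl}$ is already traceless, and (ii) the complementary situation, where the paper deduces a posteriori that $\F$ has exactly one good pair, namely at $u$.  Your route instead matches the case split of the statement directly, first establishing $F'_u\subseteq \St_\F\cdot u\subseteq F''_u$ and then deciding whether the gap is filled by producing (or obstructing) an element hitting $u$ modulo $F'_u$; the compensation via a second good pair is exactly the same mechanism as the paper's correction element, just discovered from the other end.

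The one place your sketch is genuinely thin is the ``only good pair'' case: you assert that the traceless constraint forces the image into $F'_u$, but do not give the argument.  A clean way to fill this is to fix $z\in (F'_u)^\bot$ with $\langle u,z\rangle=1$ and define $\pi\colon F''_u\to\C$ by $\pi(v)=\langle v,z\rangle$; then for a pure tensor $v\otimes w$ with $v\in F''_\alpha$, $w\in (F'_\alpha)^\bot$ one checks that $\pi((v\otimes w)\cdot u)$ vanishes unless $\alpha$ is the pair at $u$, in which case it equals $\langle v,w\rangle$.  Since the pair at $u$ is the unique good pair, the same holds for the trace contribution, and one obtains $\pi(\phi\cdot u)=\tr(\phi)$ for all $\phi\in\St_{\F,\gl}$, whence $\phi\cdot u\in F'_u$ for $\phi\in\St_\F$.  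With this filled in, your argument is complete; what the paper's organization buys is that cases~(1) and~(2) are handled uniformly by a single formula for $\St_\F\cdot u$, whereas your organization buys a case split that mirrors the statement and makes the role of the trace constraint more explicit.
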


\begin{proof}
Fix $u \in V$.  Consider $\F = \{ F'_\alpha, F''_\alpha \}_{\alpha \in A}$.  There are three cases to consider.

Suppose first that there exists $\alpha \in A$ for which $(F'_\alpha)^\bot \cap u^\bot \nsubseteq (F''_\alpha)^\bot$.  Then there exists $y \in (F'_\alpha)^\bot \cap u^\bot$ such that $y \notin (F''_\alpha)^\bot$.  Hence there exists $x \in F''_\alpha$ such that $\langle x , y \rangle = 1$.  Then $\St_\F = \Span_{\alpha \in A} \{ v \otimes w - \langle v , w \rangle x \otimes y : v \in F''_\alpha, w \in (F'_\alpha)^\bot \}$.   Let $v \in F''_\alpha$ and $w \in (F'_\alpha)^\bot$.  Since $(v \otimes w - \langle v , w \rangle x \otimes y )\cdot u = \langle u , w \rangle v - \langle u , y \rangle x = \langle u , w \rangle v$, indeed $\St_\F \cdot u = \bigcup_{u \notin \overline{F'_\alpha}} F''_\alpha$.  It is easy to check that $$\St_\F \cdot u = 
\begin{cases}
F'_u & \overline{F'_u} = F''_u ;\\
F''_u & \textrm{otherwise}.
\end{cases}$$

Suppose second that $(F'_\alpha)^\bot = (F''_\alpha)^\bot$ for all $\alpha \in A$.  Then $\St_\F = \sum_\alpha F''_\alpha \otimes (F'_\alpha)^\bot$, which is to say that the stabilizer of $\F$ in $\gl(V,V_*)$ is already traceless.  Let $v \in F''_\alpha$ and $w \in (F'_\alpha)^\bot$.  Since $(v \otimes w)\cdot u = \langle u , w \rangle v$, indeed $\St_\F \cdot u = \bigcup_{u \notin \overline{F'_\alpha}} F''_\alpha$.  Again $$\St_\F \cdot u = 
\begin{cases}
F'_u & \overline{F'_u} = F''_u ;\\
F''_u & \textrm{otherwise}.
\end{cases}$$

Suppose third that $(F'_\alpha)^\bot \cap u^\bot \subset (F''_\alpha)^\bot$ for all $\alpha \in A$ and that there exists $\gamma \in A$ for which $(F'_\gamma)^\bot \neq (F''_\gamma)^\bot$.  Then $(F'_\gamma)^\bot \cap u^\bot \subset (F''_\gamma)^\bot \subsetneq (F'_\gamma)^\bot$ implies that $(F'_\gamma)^\bot \cap u^\bot = (F''_\gamma)^\bot$.  Thus $(F''_\gamma)^\bot \subset u^\bot$, and hence $u \in \overline{F''_\gamma} = F''_\gamma$.  If $u \in F'_\gamma$, then $u^\bot \cap (F'_\gamma)^\bot = (F'_\gamma)^\bot$.  
Hence $u \in F''_\gamma \setminus F'_\gamma$.  This argument implies that $\F$ has exactly one good pair.  One may check that $\St_\F = (\sum_{\alpha \in A} F''_\alpha \otimes (F'_\alpha)^\bot) \cap \sl(V,V_*) = \sum_{\gamma \neq \alpha \in A} F''_\alpha \otimes (F'_\alpha)^\bot + F''_\gamma \otimes (F''_\gamma)^\bot$.  In this case, $\St_\F \cdot u = F'_u$.
\end{proof}

\begin{lemma} \label{slinjective}
If $\F$ and $\G$ are maximal closed generalized flags in $V$ with $\St_\F \subset \St_\G$, then $\F = \G$.
\end{lemma}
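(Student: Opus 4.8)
The plan is to reconstruct each flag from the module-theoretic data of its stabilizer, and to show that the inclusion $\St_\F \subset \St_\G$ forces the two reconstructions to coincide. The natural tool is the cyclic-span function computed in Lemma \ref{slcyclic}: for a maximal closed generalized flag $\F$ and any $u \in V$, the subspace $\St_\F \cdot u$ is essentially $F''_u$, with the two exceptional cases (when $\overline{F'_u} = F''_u$, or when $F'_u \subset F''_u$ is the unique good pair of $\F$) giving $F'_u$ instead. The key observation is that from the family of subspaces $\{\St_\F \cdot u : u \in V\}$ one can recover the chain $\F^{\bot\bot}$, hence $\F$ itself, since $\F$ is closed and thus $\F = \fl(\F^{\bot\bot})$; and if $\St_\F \subset \St_\G$, then $\St_\F \cdot u \subseteq \St_\G \cdot u$ for every $u$, which should pin down $\G$ in terms of $\F$.

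Concretely, I would argue as follows. First, handle the generic behavior: for each nonzero $u$, one has $F''_u = \overline{\St_\F \cdot u}$ except in the exceptional cases, and similarly for $\G$. From $\St_\F \cdot u \subseteq \St_\G \cdot u$ one gets, after taking closures, $\overline{F''_u} \subseteq \overline{G''_u}$ in the generic situation, i.e. $F''_u \subseteq G''_u$ (both successors are closed). Running over all $u$ this shows every closed subspace appearing as a successor in $\F$ is contained in one appearing in $\G$; since both are maximal closed generalized flags (so the closed subspaces in each form a maximal chain of closed subspaces), I would then argue that the two maximal chains of closed subspaces $\F^{\bot\bot}$ and $\G^{\bot\bot}$ must in fact be equal — a strict containment of chains would contradict maximality of one of them, or would produce a closed $\St_\G$-stable subspace strictly between consecutive closed members of $\F^{\bot\bot}$, which is impossible since those members are consecutive in a \emph{maximal} chain. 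Once $\F^{\bot\bot} = \G^{\bot\bot}$, applying $\fl$ gives $\F = \G$.

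The step I expect to be the main obstacle is the careful bookkeeping around the two exceptional cases in Lemma \ref{slcyclic}, where $\St_\F \cdot u$ drops from $F''_u$ to $F'_u$. In particular, the ``unique good pair'' case is genuinely global (it depends on $\F$ as a whole, not just on $u$), so I cannot read off $F''_u$ pointwise in a uniform way; I would need to separate the analysis according to how many good pairs $\F$ and $\G$ have. If $\F$ has at least two good pairs, then for all but possibly finitely much ambiguity $\overline{\St_\F \cdot u}$ recovers $F''_u$, and the comparison with $\G$ goes through; the case where $\F$ (or $\G$) has exactly one good pair has to be treated by hand, but there the flag is almost determined already (it is a maximal flag outside one infinite-codimension jump), so the containment $\St_\F \subset \St_\G$ combined with both being maximal closed flags should force equality directly. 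I would also lean on the explicit formulas $\St_\F = \sum_\alpha F''_\alpha \otimes (F'_\alpha)^\bot$ (corrected for tracelessness as in the proof of Lemma \ref{slcyclic}) to rule out, by a direct dimension/annihilator argument, the possibility that $\St_\F \subset \St_\G$ with $\F \neq \G$: if some successor $G''_\beta$ of $\G$ were strictly larger than the corresponding closed subspace of $\F$, then $\St_\G$ would contain an element moving a vector outside everything $\St_\F$ can reach, contradicting the inclusion.
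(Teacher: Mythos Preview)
Your opening moves are exactly right and coincide with the paper's: compute $\St_\F \cdot u$ via Lemma~\ref{slcyclic}, use $\St_\F \cdot u \subset \St_\G \cdot u$, and take closures to get $F''_u \subset G''_u$ away from the exceptional cases. But the argument stalls at the point where you try to promote this containment to equality.

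The claim that ``the closed subspaces in a maximal closed generalized flag form a maximal chain of closed subspaces'' is false. Take $V$ with basis $\{e_q : q \in \Q\}$, $V_*$ spanned by the dual vectors $e_q^*$, and $F''_q = \Span\{e_r : r \leq q\}$, $F'_q = \Span\{e_r : r < q\}$. Every pair is good and of codimension $1$, so $\F$ is a maximal closed generalized flag; but $C := \Span\{e_r : r < \sqrt{2}\}$ is a closed subspace comparable to every member of $\F^{\bot\bot}$ yet not in $\F^{\bot\bot}$. So the maximality argument you sketch cannot work, and the fallback in your last paragraph has the inclusion the wrong way round: $\St_\F \subset \St_\G$ is entirely consistent with $\St_\G$ moving vectors farther than $\St_\F$ can.

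What you are missing is the use of the inclusion in the \emph{other} direction, via stable subspaces rather than cyclic spans. Since $\St_\F \subset \St_\G$, every $\St_\G$-stable subspace --- in particular each $G'_{u_\alpha}$ --- is $\St_\F$-stable. Now apply Lemma~\ref{slcyclic} to elements of $G'_{u_\alpha}$: if some $u \in G'_{u_\alpha}$ lay outside $F'_\alpha$ (resp.\ $F''_\alpha$ in the non-good case), then $\St_\F \cdot u \supset F''_\alpha \ni u_\alpha$, forcing $u_\alpha \in G'_{u_\alpha}$, a contradiction. This pins $G'_{u_\alpha}$ below $F'_\alpha$ (or $F''_\alpha$), and then the codimension-$1$ condition on good pairs of $\G$ forces $G''_{u_\alpha} = F''_\alpha$. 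The paper's proof is exactly this two-sided squeeze, followed by a short separate treatment of the unique-good-pair index $\gamma$ using that $F'_\gamma$ and $F''_\gamma$ are already determined by the other successors.
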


\begin{proof}
Let $\F = \{ F'_\alpha , F''_\alpha \}_{\alpha \in A}$ and $\G = \{G'_\beta, G''_\beta \}_{\beta \in B}$. For each $\alpha \in A$ choose $u_\alpha \in F''_\alpha \setminus F'_\alpha$.  If there is exactly one $\gamma \in A$ such that $\overline{G'_\gamma} = G'_\gamma$, define $A' := A\setminus \{\gamma \}$. Otherwise, let $A':=A$.

Since $\St_\F \subset \St_\G$, it follows that $$\overline{\St_\F \cdot u_\alpha} \subset \overline{\St_\G \cdot u_\alpha}.$$  For any $\alpha \in A'$, Lemma \ref{slcyclic} implies that $\overline{\St_\F \cdot u_\alpha} = F''_\alpha$.  Therefore for any $\alpha \in A'$, indeed $F''_\alpha = F''_{u_\alpha} = \overline{\St_\F \cdot u_\alpha} \subset \overline{\St_\G \cdot u_\alpha} \subset G''_{u_\alpha}$. 

We will show that $F''_\alpha = G''_{u_\alpha}$ for all $\alpha \in A'$.  There are two cases to consider.

\begin{enumerate}
\item  Suppose $\overline{F'_\alpha} = F''_\alpha$.  Then Lemma \ref{slcyclic} implies that for any $u \notin F''_\alpha$, indeed $F''_\alpha \subset \St_\F \cdot u$.  Observe that $G'_{u_\alpha}$ is stable under $\St_\F$ and $u_\alpha \notin G'_{u_\alpha}$.  It follows that $G'_{u_\alpha} \subset F''_\alpha$.  Thus $G'_{u_\alpha} \subset F''_\alpha \subset G''_{u_\alpha}$.  Since $u_\alpha \in F''_\alpha$, it must be that $G'_{u_\alpha} \subsetneq F''_\alpha$.  Since $\G$ is a maximal closed generalized flag, necessarily $F''_\alpha = G''_{u_\alpha}$.

\item  Suppose $F'_\alpha$ is closed.   Then Lemma \ref{slcyclic} implies that for any $u \notin F'_\alpha$, indeed $F''_\alpha \subset \St_\F \cdot u$.  Observe that $G'_{u_\alpha}$ is stable under $\St_\F$ and $u_\alpha \notin G'_{u_\alpha}$.  It follows that $G'_{u_\alpha} \subset F'_\alpha$.  Thus $G'_{u_\alpha} \subset F'_\alpha \subset F''_\alpha \subset G''_{u_\alpha}$.  Since $\G$ is a maximal closed generalized flag, necessarily $F''_\alpha = G''_{u_\alpha}$.
\end{enumerate}

If $A = A'$, then the proof is done, since a generalized flag is determined by its set of successors.  Assume therefore that $A \neq A'$, in which case it remains to show that $F''_\gamma = G''_{u_\gamma}$.  Observe first that $F'_\gamma =  \bigcup_{\alpha < \gamma} F''_\alpha = \bigcup_{\alpha < \gamma} G''_{u_\alpha}$.  Since $\dim F'_\alpha / F''_\gamma = \infty$ for any $\alpha > \gamma$, it must hold that $F''_\gamma = \bigcap_{\alpha > \gamma} F'_\alpha = \bigcap_{\alpha > \gamma} F''_\alpha = \bigcap_{\alpha > \gamma} G''_{u_\alpha}$.  Since $\dim F''_\gamma / F'_\gamma = 1$, it follows that $F'_\gamma \subset F''_\gamma$ is a pair in $\G$.  Thus $F''_\gamma = G''_{u_\gamma}$, and consequently $\F = \G$.
\end{proof}

The following result fully describes Borel subalgebras of $\sl(V,V_*)$.

\begin{thm} \label{slmain}
A subalgebra of $\sl(V,V_*)$ is a Borel subalgebra if and only if it is the stabilizer of a maximal closed generalized flag in $V$.  
Furthermore, the map $\F \mapsto \St_\F$ is a bijection between maximal closed generalized flags in $V$ and Borel subalgebras of $\sl(V,V_*)$.
\end{thm}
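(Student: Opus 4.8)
The plan is to prove Theorem \ref{slmain} by assembling the pieces already in place: Proposition \ref{special} (every Borel subalgebra of $\sl(V,V_*)$ is $\St_\F$ for some maximal closed generalized flag $\F$), Proposition \ref{locsolvable} together with the fact that $\St_\F$ in $\sl$ is contained in the stabilizer in $\gl$ of a maximal refinement (hence locally solvable), and Lemma \ref{slinjective} (the map $\F \mapsto \St_\F$ is injective on maximal closed generalized flags). What remains to be shown is the one implication not yet explicitly recorded: if $\F$ is a maximal closed generalized flag in $V$, then $\St_\F$ is a \emph{maximal} locally solvable subalgebra, i.e.\ an actual Borel subalgebra — local solvability alone is not enough.

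First I would record that $\St_\F$ is locally solvable: any maximal generalized flag $\widetilde{\F}$ refining $\F$ has $\St_{\widetilde{\F}}^{\gl} \supseteq \St_\F$ (stabilizing a finer flag is more restrictive, so in fact $\St_{\widetilde\F}^{\gl} \cap \sl \subseteq \St_\F$; more to the point the relevant containment is $\St_\F \subseteq \St_{\F}^{\gl}$ which one checks is locally solvable via Proposition \ref{locsolvable} applied to a maximal refinement), so $\St_\F$ is locally solvable. Next, the heart of the argument: let $\b$ be any Borel subalgebra of $\sl(V,V_*)$ with $\St_\F \subseteq \b$. By Proposition \ref{special}, $\b = \St_\G$ for some maximal closed generalized flag $\G$. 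Thus $\St_\F \subseteq \St_\G$, and Lemma \ref{slinjective} forces $\F = \G$, hence $\St_\F = \b$. This shows $\St_\F$ is not properly contained in any locally solvable subalgebra (since any such would be contained in a maximal one, i.e.\ a Borel, by a Zorn's lemma argument on the union of a chain of locally solvable subalgebras — which is again locally solvable), so $\St_\F$ is maximal locally solvable. Combined with Proposition \ref{special}, this gives the "if and only if."

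For the bijectivity statement: surjectivity of $\F \mapsto \St_\F$ onto Borel subalgebras is exactly Proposition \ref{special}. Injectivity follows from Lemma \ref{slinjective}: if $\St_\F = \St_\G$ then in particular $\St_\F \subseteq \St_\G$, so $\F = \G$. Hence the map is a bijection.

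I expect the main obstacle to be the Zorn's lemma step — verifying that every locally solvable subalgebra of $\sl(V,V_*)$ is contained in a maximal one, which requires that the union of a chain of locally solvable subalgebras is locally solvable. This is routine (a finite subset of the union lies in one member of the chain, hence in a finite-dimensional solvable subalgebra), but it is the one place where we use that "Borel = maximal locally solvable" is a nonvacuous notion; alternatively one can sidestep it by arguing directly that if $\St_\F \subsetneq \s$ with $\s$ locally solvable, then $\s$ stabilizes some closed subspace of $V$ not stabilized by $\St_\F$, contradicting maximality of the chain $\Ch$ used to build $\F$ — but going through Proposition \ref{special} as above is cleaner and avoids re-deriving structural facts about stabilizers.
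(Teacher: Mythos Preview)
Your proposal is correct and follows essentially the same route as the paper: show $\St_\F$ is locally solvable via Proposition~\ref{locsolvable} applied to a maximal refinement, embed it in a Borel $\b$, invoke Proposition~\ref{special} to write $\b=\St_\G$, then use Lemma~\ref{slinjective} to conclude $\F=\G$, with surjectivity and injectivity of $\F\mapsto\St_\F$ coming from Proposition~\ref{special} and Lemma~\ref{slinjective} respectively. The only difference is cosmetic: the paper silently assumes the existence of a Borel containing $\St_\F$ where you spell out the (routine) Zorn argument, and your parenthetical about the direction of containment under refinement is a bit tangled---the clean statement, which the paper uses, is that for a maximal closed $\F$ the $\gl$-stabilizer actually \emph{equals} that of any maximal refinement, so Proposition~\ref{locsolvable} applies directly.
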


\begin{proof}
Let $\F$ be an arbitrary maximal closed generalized flag in $V$.  Because $\St_\F$ equals the stabilizer of any maximal generalized flag refining $\F$, Proposition \ref{locsolvable} yields that $\St_\F$ is locally solvable.  Hence there exists a Borel subalgebra $\b$ with $\St_\F \subset \b$.  By Proposition \ref{special}, there is a maximal closed generalized flag $\G$ in $V$ with $\b = \St_\G$.  It follows from Lemma \ref{slinjective} that $\F = \G$.  As a result, $\St_\F = \b$ is a Borel subalgebra.  Hence $\F \mapsto \St_\F$ gives a map from maximal closed generalized flags in $V$ to Borel subalgebras of $\sl(V,V_*)$.  Proposition \ref{special} implies that the map is surjective, and Lemma \ref{slinjective} implies that it is injective.
\end{proof}

\subsection{Borel subalgebras of $\so_\infty$} \label{sosection}

In this section it is shown that Borel subalgebras of $\so_\infty$ almost correspond to maximal closed isotropic generalized flags in the standard representation.  Let $\b \subset \so(V)$ be a Borel subalgebra.  Here we denote by $\St_\F$ the stabilizer in $\so(V)$ of any generalized flag $\F$ in $V$, and we denote by $\St_{\F,\gl}$ the stabilizer of $\F$ in $\gl(V,V)$.  Of course, $\St_\F = \St_{\F,\gl} \cap \so(V)$.  



\begin{defn}
Let $\F = \{ F'_\alpha , F''_\alpha \}_{\alpha \in A}$ and $\G = \{ G'_\beta , G''_\beta \}_{\beta \in B}$ be maximal closed isotropic generalized flags.  We say that $\F$ and $\G$ are \emph{twins} if $A$ and $B$ have maximal elements, denoted $\infty$, such that:
\begin{enumerate}
\item \label{C1} $\{ F'_\alpha , F''_\alpha \}_{\alpha \in A \setminus \{\infty\}} =  \{ G'_\beta , G''_\beta \}_{\beta \in B \setminus \{ \infty \}}$;
\item \label{C2} $F'_\infty$ is closed and $\dim (F'_\infty)^\bot / F'_\infty = 2$; and
\item \label{C3} $F''_\infty \neq G''_\infty$ are the two maximal isotropic subspaces containing $F'_\infty$.
\end{enumerate}
\end{defn}

Condition (\ref{C1}) of this definition forces $F'_\infty = G'_\infty$.  As for condition (\ref{C3}), it makes sense after condition (\ref{C2}) because whenever $L \subset V$ is a closed isotropic subspace with $\dim L^\bot / L = 2$, there are exactly two maximal isotropic subspaces containing $L$.  We say that $\F$ \emph{has a twin} if $\F =  \{ F'_\alpha , F''_\alpha \}_{\alpha \in A}$ is a maximal closed isotropic generalized flag with a maximal element $\infty$, such that $F'_\infty$ is closed and $(F''_\infty)^\bot = F''_\infty$.  If $\F$ has a twin, let $\tw(\F)$ denote the twin of $\F$.  That is, $\tw(\F)$ is obtained from $\F$ by replacing $F''_\infty$ with the other maximal isotropic subspace containing $F'_\infty$.  Note that $\tw$ is an involution on the set of maximal closed isotropic generalized flags that have twins.  Generalizing a phenomenon already present in the case of $\so_{2n}$, the maximal closed isotropic generalized flags $\F$ and $\tw(\F)$ have the same stabilizer in $\so(V)$. 

\begin{lemma} \label{sosamestabilizer}
Let $\F$ be a maximal generalized flag in $V$ such that $\F \cup \F^\bot \cup \{M , M^\bot \}$ is a chain for some maximal isotropic subspace $M \subset V$.  Then $\St_{\F_{iso}} = \St_\F$.
\end{lemma}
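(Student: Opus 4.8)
The plan is to establish the nontrivial inclusion $\St_{\F_{iso}} \subseteq \St_\F$, the reverse being automatic because $\F_{iso}$ is a subfamily of $\F$. So I would fix $Y \in \St_{\F_{iso}}$, which means $Y \in \so(V)$ and $YG \subseteq G$ for every subspace $G$ occurring in $\F_{iso}$, and would lean throughout on the identity $\langle YG^\bot, G\rangle = -\langle G^\bot, YG\rangle$ valid for $Y \in \so(V)$, which shows that $YG \subseteq G$ implies $YG^\bot \subseteq G^\bot$. The first step is to describe how $\F$ sits relative to $M$. Since $M$ is maximal isotropic, hence closed, and $\F \cup \{M, M^\bot\}$ is a chain, a member $F \in \F$ is isotropic precisely when $F \subseteq M$: if $M \subsetneq F$ then $F \subseteq F^\bot \subseteq M^\bot$ together with $F \ne M^\bot$ (as $M^\bot$ is not isotropic) would force a subspace strictly between $M$ and $M^\bot$, contradicting $\dim M^\bot/M \leq 1$. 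Because every good pair of $\F$ has codimension $1$, no pair of $\F$ straddles $M$; hence $\bigcup\{F \in \F : F \subseteq M\} = M$ and $\F_{iso}$ is a maximal generalized flag in $M$. Consequently $Y$ stabilizes $M$, and then $M^\bot$; and each $F \in \F$, being comparable with both $M$ and $M^\bot$, satisfies $F \subseteq M$ or $M^\bot \subseteq F$.

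For $F \subseteq M$ we have $YF \subseteq F$ by hypothesis, so the work is in the case $M^\bot \subseteq F$. Then $F^\bot \subseteq M$ and, because $\F \cup \F^\bot$ is a chain, $F^\bot$ is comparable with every member of $\F_{iso}$; a codimension count then yields $F^\bot = \bigcup\{G \in \F_{iso} : G \subseteq F^\bot\}$ (otherwise some good pair of $\F_{iso}$ would have codimension $\geq 2$), so that $Y$ stabilizes $F^\bot$ and hence $F^{\bot\bot} = \overline F$. If $F$ is closed, this already gives $YF \subseteq F$. If $F$ is not closed, then since finite-codimensional extensions of closed subspaces are closed and $\F$ is maximal, either $F$ is the immediate successor in $\F$ of a (necessarily non-closed) member $F^-$ with $M^\bot \subseteq F^-$ and $\overline{F^-} = F$ — so the preceding argument applied to $F^-$ shows $Y$ stabilizes $(F^-)^{\bot\bot} = F$ — or $F$ has no immediate predecessor and is therefore the union of the closed members of $\F$ strictly below it, each of which $Y$ stabilizes. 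In all cases $YF \subseteq F$, so $Y \in \St_\F$.

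The step I expect to be the main obstacle is reaching $YF \subseteq F$ for non-isotropic $F$: the symmetric-form identity only delivers stabilization of $F^{\bot\bot} = \overline F$, which may be strictly larger than $F$ when $F$ sits inside an infinite-codimensional good pair of the perp chain; recovering $F$ itself requires a careful analysis of the closure structure of the maximal generalized flag $\F$ (whether a non-closed member is a predecessor with closure equal to its successor, or a union of closed members), and one must check that the chain hypothesis $\F \cup \F^\bot \cup \{M, M^\bot\}$ does force the relevant closures to behave. The supporting facts that $\{F \in \F : F \subseteq M\}$ is a maximal generalized flag in $M$ and that every subspace of $M$ comparable with all of it is a union of an initial segment of it are routine once the chain hypothesis is in hand, but are exactly what make the "union of stabilized subspaces" reconstruction legitimate.
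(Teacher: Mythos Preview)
Your subspace-level strategy---show $YF\subseteq F$ for each $F\in\F$ by reducing to $\F_{iso}$ via the $\so$-identity $YG\subseteq G\Rightarrow YG^\bot\subseteq G^\bot$---is genuinely different from the paper's proof, which never tries to stabilize individual non-isotropic members of $\F$. Instead the paper computes $\St_\F=\sum_{F''_\alpha\subseteq M}F''_\alpha\wedge(F'_\alpha)^\bot$ explicitly (via the map $x\otimes y\mapsto x\otimes y-y\otimes x$ on $\St_{\F,\gl}$), writes an arbitrary $Z\in\St_{\F_{iso}}$ as $X+Y$ with $X\in\sum F''_\alpha\otimes(F'_\alpha)^\bot$ and $Y\in V\otimes M^\bot$, and uses antisymmetry to show $Z=(X-\sigma X)+\eta$ with $\eta\in\bigwedge^2 M^\bot\subseteq\St_\F$. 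Your argument is correct and pleasant for $F\subseteq M$ and for closed $F\supseteq M^\bot$.

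The error is in the non-closed coisotropic case, and it is not just a missing justification: the claim $\overline{F^-}=F$ is false. Suppose $F\supseteq M^\bot$ is not closed and has immediate predecessor $F^-$ (necessarily non-closed, as you note). Your own perp argument shows $(F^-)^\bot=\bigcup\{G\in\F_{iso}:G\subseteq(F^-)^\bot\}$. If some $G\in\F_{iso}$ satisfied $F^\bot\subsetneq G\subseteq(F^-)^\bot$, then $G^\bot\subsetneq F$ by the chain hypothesis, whence $\overline{F^-}\subseteq G^\bot\subsetneq F$; but $F^-\subseteq\overline{F^-}$ and $\dim F/F^-=1$ would force $\overline{F^-}=F^-$, contradicting that $F^-$ is not closed. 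Hence every such $G$ already lies in $F^\bot$, so $(F^-)^\bot=F^\bot$ and therefore $\overline{F^-}=\overline{F}\supsetneq F$. Thus ``the preceding argument applied to $F^-$'' yields only $Y\overline F\subseteq\overline F$, which you already had. The alternative branch (``$F$ is the union of the closed members of $\F$ strictly below it'') fails for the same reason: the members immediately below $F$ are again non-closed with the same closure $\overline F$, and there is no well-founded descent available. This is exactly why the paper abandons the subspace-by-subspace approach and works with the tensor description, where closedness of the individual $F$'s plays no role.
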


\begin{proof}
Clearly $\St_\F \subset \St_{\F_{iso}} $.  Let $Z \in \St_{\F_{iso}}$ be arbitrary.  

Let $\F = \{ F'_\alpha, F''_\alpha \}_{\alpha \in A}$.  First one must show that $\St_\F = \sum_{\alpha \in A, F''_\alpha \subset M} F''_\alpha \wedge (F'_\alpha)^\bot$.

For any $x \in F''_\alpha$ and $y \in (F'_\alpha)^\bot$, on the one hand $x \otimes y \in \St_{\F,\gl}$, but on the other hand since $\F \cup \F^\bot$ is a chain, in fact $y \otimes x \in \St_{\F,\gl}$.  In detail, there exists $\beta \in A$ for which $y \in F''_\beta \setminus F'_\beta$.  Since $\F \cup \F^\bot$ is a chain, and $y \in (F'_\alpha)^\bot$ and $y \notin F'_\beta$, it follows that $F'_\beta \subsetneq (F'_\alpha)^\bot$.  Moreover since $\dim (F'_\alpha)^\bot / (F''_\alpha)^\bot \leq 1$, it must be that $F'_\beta \subset (F''_\alpha)^\bot$, and thus $F''_\alpha = (F''_\alpha)^{\bot \bot} \subset (F'_\beta)^\bot$.  So $x \in (F'_\beta)^\bot$, and hence $y \otimes x \in F''_\beta \otimes (F'_\beta)^\bot \subset \St_{\F , \gl}$.
Thus the map of vector spaces (which is \emph{not} a map of Lie algebras):
\begin{eqnarray*}
\varphi: \sum_{\alpha \in A} F''_\alpha \otimes (F'_\alpha)^\bot & \rightarrow & {\bigwedge}^2 V \\
x \otimes y & \mapsto & x \otimes y - y \otimes x
\end{eqnarray*}
in fact has its image in $\St_\F$.  As $\varphi |_{\St_\F} = 2 \cdot \mathrm{Id}$, indeed $\varphi$ maps surjectively onto $\St_\F$.  Because $\sum F''_\alpha \otimes (F'_\alpha)^\bot$ is spanned by elements of the form $x \otimes y$, with $x \in F''_\alpha$ and $y \in (F'_\alpha)^\bot$ for some $\alpha \in A$, likewise $\St_\F$ is spanned by elements of the form $x \otimes y - y \otimes x$, with $x \in F''_\alpha$ and $y \in (F'_\alpha)^\bot$ for some $\alpha \in A$.

Suppose $M \neq M^\bot$.  Observe that $M \subset M^\bot$ is a pair in the generalized flag $\F$.  In this case, $\St_\F$ is in fact spanned by elements of the form $x \otimes y - y \otimes x$, with $x \in F''_\alpha$ and $y \in (F'_\alpha)^\bot$ for $\alpha \in A$ which are not equal to the pair $M \subset M^\bot$.  To see this, consider that the term in $\St_\F$ corresponding to the pair $M \subset M^\bot$ is $M^\bot \otimes M^\bot$.  Let $m \in M^\bot \setminus M$.  Observe that $M^\bot \otimes M^\bot = M^\bot \otimes M + M \otimes M^\bot + \C(m \otimes m)$.  Now $M^\bot \otimes M + M \otimes M^\bot \subset \sum_{(M \subset M^\bot) \neq \alpha \in A} F''_\alpha \otimes (F'_\alpha)^\bot$ and $m \otimes m \in \Sym^2(V)$. Since $\sigma$ fixes $M^\bot \otimes M + M \otimes M^\bot$ and $m \otimes m$, it follows that $\St_\F =  \big( \sum_{(M \subset M^\bot) \neq \alpha \in A} F''_\alpha \otimes (F'_\alpha)^\bot \big) \cap {\bigwedge}^2 V$.

Moreover, $\St_\F$ is spanned by elements of the form $x \otimes y - y \otimes x$, with $x \in F''_\alpha \subset M$ and $y \in (F'_\alpha)^\bot$.  (Explicitly, if $M^\bot \subset F'_\alpha$, then $y \in (F'_\alpha)^\bot \subset M^{\bot \bot} = M$.)  This concludes the proof that $\St_\F = \sum_{\alpha \in A, F''_\alpha \subset M} F''_\alpha \wedge (F'_\alpha)^\bot$.

Now $$\St_{\F_{iso} , \gl} =  \sum_{F''_\alpha \subset M} F''_\alpha \otimes (F'_\alpha)^\bot + V \otimes M^\bot,$$ because it is the stabilizer of  $\F_{iso} \cup \{M \subset V \}$, which is a generalized flag in $V$.  Then $Z = X + Y$ for some $X \in \sum_{F''_\alpha \subset M} F''_\alpha \otimes (F'_\alpha)^\bot$ and $Y \in V \otimes M^\bot$.

Note that $Z = -\sigma(Z)$, i.e. $X + Y = -\sigma(X) - \sigma(Y)$.  That is, $Y + \sigma(X) = -\sigma(Y) - X$, and the left hand side of this equation is clearly an element of $V \otimes M^\bot$, while the righthand side is clearly an element of $M^\bot \otimes V$.  So $Y + \sigma(X) \in (V \otimes M^\bot) \cap (M^\bot \otimes V) = M^\bot \otimes M^\bot$.  Now $\sigma(Y + \sigma(X)) = \sigma(Y) + X = -(Y + \sigma(X))$, and therefore $Y + \sigma(X) \in (M^\bot \otimes M^\bot) \cap \bigwedge^2(V) =  \bigwedge^2 M^\bot$.  Let $\eta := Y + \sigma(X) \in \bigwedge^2 M^\bot$.  So $Z = X - \sigma(X) + \eta$.  Clearly $X - \sigma(X) \in \St_\F$.  Observe that either $M \subset M^\bot$ is a pair in $\F$, in which case $M^\bot \otimes M^\bot \subset \St_{\F,\gl}$, or else $M = M^\bot$, in which case $M \otimes M \subset \St_{\F,\gl}$.  Since $\eta \in \bigwedge^2 M^\bot \subset \St_\F$, it follows that $Z \in \St_\F$.
\end{proof}

\begin{lemma} \label{soiso}
Let $\F = \{ F'_\alpha , F''_\alpha \}_{\alpha \in A}$ be a maximal closed isotropic generalized flag in $V$.  Then $\St_\F = \sum_{\alpha \in A} F''_\alpha \wedge (F'_\alpha)^\bot$, and moreover $\St_\F$ is locally solvable.
\end{lemma}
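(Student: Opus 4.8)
The plan is to identify $\St_\F$ with the stabilizer in $\so(V)$ of a genuine \emph{maximal} generalized flag in $V$, so that Lemma~\ref{sosamestabilizer} and Proposition~\ref{locsolvable} become available, and then to read off the displayed formula.

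First I would set $M:=\bigcup_{\alpha\in A}F''_\alpha$. Since $\F$ is a maximal closed isotropic generalized flag, $M$ is a maximal isotropic subspace, so $M$ is closed with $\dim M^\bot/M\le 1$, every $F''_\alpha$ is closed, and $F''_\alpha\subseteq M\subseteq M^\bot\subseteq (F'_\beta)^\bot$ for all $\alpha,\beta\in A$. Hence $\Ch:=\{F'_\alpha,F''_\alpha\}_{\alpha\in A}\cup\{(F'_\alpha)^\bot,(F''_\alpha)^\bot\}_{\alpha\in A}\cup\{0,M,M^\bot,V\}$ is a chain, and $\Ch^\bot\subseteq\Ch$ because each $F''_\alpha$ and each perpendicular is closed while $\overline{F'_\alpha}\in\{F'_\alpha,F''_\alpha\}$. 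Put $\Eh:=\fl(\Ch)$, a generalized flag in $V$. Because every element of $\so(V)$ is skew for the form, it stabilizes $L^\bot$ whenever it stabilizes $L$, so $\St_\F=\St_\Ch$ inside $\so(V)$; and since each $C\in\Ch$ is a union of members of $\Eh$ while $\F\subseteq\Ch$, also $\St_\Ch=\St_\Eh$. Thus $\St_\F=\St_\Eh$.

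Next I would exploit the maximality of $\F$ to see that every good pair of $\Eh$ has codimension $1$: below $M$ these are the good pairs of $\F$; above $M^\bot$ they are the mirror pairs $(F''_\alpha)^\bot\subset(F'_\alpha)^\bot$, of codimension $1$ because taking perpendiculars preserves finite codimension of closed subspaces; and the middle pair $M\subset M^\bot$ has codimension $\le 1$. The remaining pairs of $\Eh$, those of codimension $>1$, are precisely the non-good pairs $F'\subsetneq F''=\overline{F'}$ arising from $\F$. Refining each such gap by a maximal chain $F'=Q_0\subsetneq Q_1\subsetneq\cdots\subsetneq F''$ produces a maximal generalized flag $\G$ in $V$; each $Q_i$ lies between $F'$ and $F''$ and $Q_i^\bot=(F')^\bot=(F'')^\bot$ introduces no new perpendicular, so $\G\cup\G^\bot\cup\{M,M^\bot\}$ is again a chain. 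Moreover, since $\overline{F'}=F''$ forces $(F')^\bot=(F'')^\bot$, one checks $\sum_i Q_i\otimes Q_{i-1}^\bot=F''\otimes(F')^\bot$ in $\gl(V,V)$; summing over the refined gaps gives $\St_{\Eh,\gl}=\St_{\G,\gl}$, and intersecting with $\so(V)$ yields $\St_\F=\St_\Eh=\St_\G$.

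Finally, Lemma~\ref{sosamestabilizer} applies to $\G$ and gives $\St_\G=\sum_{G''_\gamma\subseteq M}G''_\gamma\wedge(G'_\gamma)^\bot$, the sum taken over pairs of $\G$; and since $\St_\G=\St_{\G,\gl}\cap\so(V)$ with $\St_{\G,\gl}$ locally solvable by Proposition~\ref{locsolvable}, the subalgebra $\St_\G$ is locally solvable. The pairs of $\G$ with successor contained in $M$ are the good pairs of $\F$ together with the subdivisions $Q_{i-1}\subsetneq Q_i$ of its non-good pairs, and applying the linear map $x\otimes y\mapsto x\otimes y-y\otimes x$ to the identity $\sum_i Q_i\otimes Q_{i-1}^\bot=F''\otimes(F')^\bot$ collapses each such family of terms back to $F''\wedge(F')^\bot$. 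Hence $\St_\F=\sum_{\alpha\in A}F''_\alpha\wedge(F'_\alpha)^\bot$ and $\St_\F$ is locally solvable. I expect the main difficulty to lie in the bookkeeping of the flag construction: verifying that $\Ch$ and then $\G\cup\G^\bot\cup\{M,M^\bot\}$ really are chains, that the pairs of $\Eh$ of codimension exceeding $1$ are exactly the non-good pairs of $\F$, and that none of the refinement steps alters the stabilizer in $\gl(V,V)$.
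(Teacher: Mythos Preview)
Your strategy matches the paper's: build a maximal generalized flag $\G$ in $V$ with $\St_\F=\St_\G$ and with $\G\cup\G^\bot\cup\{M,M^\bot\}$ a chain, then invoke Lemma~\ref{sosamestabilizer} and Proposition~\ref{locsolvable}. The paper, however, starts directly from a \emph{maximal} chain $\Ch\supseteq\F\cup\F^\bot$ and sets $\H=\fl(\Ch)$, which is automatically a maximal generalized flag; its hard step is then the case analysis showing $\H\cup\H^\bot\cup\{M,M^\bot\}$ is a chain. Your route instead builds $\Ch=\F\cup\F^\bot\cup\{0,M,M^\bot,V\}$, forms $\Eh=\fl(\Ch)$, and refines. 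This is legitimate, and your identification $\St_\F=\St_\Ch=\St_\Eh$ via skewness is a clean shortcut.

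The one genuine soft spot is your description of the pairs of $\Eh$ above $M^\bot$. The assertion that these are exactly the mirror pairs $(F''_\alpha)^\bot\subset(F'_\alpha)^\bot$ need not hold: for $x\notin M^\bot$ the $\Eh$-pair is governed by the ``transition'' index $\gamma$ with $F'_\gamma\perp x$ but $F''_\gamma\not\perp x$, and when the downward-closed set $\{\gamma:F''_\gamma\perp x\}$ and the upward-closed set $\{\gamma:F'_\gamma\not\perp x\}$ meet at a Dedekind cut of $A$ with no max on one side and no min on the other, no such $\gamma$ exists. In that case the $\Eh$-pair is $\bigl(\bigcup_{\gamma\in A_+}(F'_\gamma)^\bot,\;P^\bot\bigr)$ with $P=\bigcup_{\gamma\in A_-}F''_\gamma$, a new non-good pair above $M^\bot$ that is not a mirror pair and may have codimension exceeding $1$. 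Your refinement, which only touches the non-good pairs of $\F$ below $M$, would then leave $\G$ non-maximal, so Lemma~\ref{sosamestabilizer} does not apply as stated. The fix is easy---these extra pairs are non-good (one checks $(E'_x)^\bot=P$, so $\overline{E'_x}=E''_x$), so refining them too costs nothing on the $\gl$-stabilizer and their refinements all have perp $\overline P$, keeping the chain condition---but it is exactly the bookkeeping you flagged, and the paper's choice to begin with a maximal chain sidesteps it entirely.
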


\begin{proof}
Let $M$ denote $\bigcup_{\alpha \in A} F''_\alpha$, which is a maximal isotropic subspace of $V$.  Let $\Ch$ be any maximal chain in $V$ containing $\F \cup \F^\bot$, and let $\H := \fl(\Ch)$.  Note that $\H_{iso}$ is a refinement of $\F$.  We will show that $\H \cup \H^\bot \cup \{M , M^\bot \}$ is a chain.  Then Lemma \ref{sosamestabilizer} gives that $\St_\H = \St_{\H_{iso}}$.  Since $\H_{iso}$ is a refinement of $\F$, and moreover since $\F$ is a maximal closed isotropic generalized flag in $M$, it holds that $\St_{\H_{iso}} = \St_\F$.  By Proposition \ref{locsolvable}, $\St_{\H, \gl}$ is locally solvable.  Hence $\St_\F = \St_{\H_{iso}} = \St_\H \subset \St_{\H, \gl}$ is locally solvable.  The formula for $\St_\F$ is seen in the proof of Lemma \ref{sosamestabilizer} to be a formula for $\St_\H = \St_{\H_{iso}}$.

It remains to show that $\H \cup \H^\bot \cup \{M , M^\bot \}$ is a chain.  Clearly $M$ and $M^\bot = \bigcap_{F \in \F} F^\bot$ are automatically compatible with $\Ch$, and they remain compatible with $\H$, and consequently also with $\H^\bot$.  Now suppose $H, I \in \H$, and one must show that either $H^\bot \subset I$ or $I \subset H^\bot$.  If $H$ and $I$ are both isotropic, then $I \subset M \subset M^\bot \subset H^\bot$.  If $H$ and $I$ are both coisotropic, then $H^\bot \subset M \subset M^\bot \subset I$.  It remains to deal with the cases
\begin{itemize}
\item $H \subset M \subset M^\bot \subset I$;
\item $I \subset M \subset M^\bot \subset H$.
\end{itemize}

In the first case, $F' \subset H \subset F''$ for some immediate predecessor-successor pair $F' \subset F''$ in $\F$.  Thus $(F'')^\bot \subset H^\bot \subset (F')^\bot$.  Since $\dim (F')^\bot / (F'')^\bot \leq 1$, it must be the case that $H^\bot$ is either $(F')^\bot$ or $(F'')^\bot$.  Since $\F^\bot \cup \{I\}$ is a chain, $H^\bot$ either contains or is contained in $I$.

In the second case, $F' \subset I \subset F''$ for some immediate predecessor-successor pair $F' \subset F''$ in $\F$.  Since $\dim (F')^\bot / (F'')^\bot \leq 1$, either $H \subset (F'')^\bot$ or $(F')^\bot \subset H$.  If $H \subset (F'')^\bot$, then $I \subset F'' = \overline{F''} \subset H^\bot$, i.e. $I \subset H^\bot$.  

Now assume that $(F')^\bot \subset H$.  Suppose there exists $F \in \F$ with $F^\bot \subset H$ and $F \subsetneq F'$.  Then $H \subset \overline{F} \subset F' \subset I$, and there is nothing left to show.  It remains to treat the case when $H \subset F^\bot$ for all $F \in \F$ with $F \subsetneq F'$.  Of course $F' = \bigcup_{F \subsetneq F'} F$.  Hence $(F')^\bot = ( \bigcup_{F \subsetneq F'} F)^\bot =  \bigcap_{F \subsetneq F'} F^\bot$.  So $H \subset \bigcap_{F \subsetneq F'} F^\bot = (F')^\bot \subset H$.  Hence $H = (F')^\bot$, i.e. $H^\bot = \overline{F'}$ which is either $F'$ or $F''$.  Since $\F \cup \{I\}$ is a chain, $H^\bot$ either contains or is contained in $I$.  
\end{proof}

\begin{lemma} \label{socyclic}
Let $\F$ be a maximal closed isotropic generalized flag in $V$.  If $u \in \bigcup_{F \in \F} F$, then 
$$\St_\F \cdot u = 
\begin{cases}
F''_u & \overline{F'_u} = F'_u \\
F'_u & \overline{F'_u} = F''_u.
\end{cases}$$
Thus $\overline{\St_\F \cdot u} = F''_u$.
\end{lemma}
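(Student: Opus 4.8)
The plan is to compute $\St_\F \cdot u$ directly from the formula $\St_\F = \sum_{\alpha \in A} F''_\alpha \wedge (F'_\alpha)^\bot$ provided by Lemma \ref{soiso}, where $F''_\alpha \wedge (F'_\alpha)^\bot$ is spanned by the elements $x \otimes y - y \otimes x$ with $x \in F''_\alpha$ and $y \in (F'_\alpha)^\bot$. Recalling that $x \otimes y \in \gl(V,V)$ acts on $V$ by $v \mapsto \langle v, y \rangle x$, such a generator sends $u$ to $\langle u, y\rangle x - \langle u, x\rangle y$. Since $\St_\F \cdot u$ is a subspace of $V$, it is enough to track the vectors produced by these generators.

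The decisive observation is that both $u$ and every $x \in F''_\alpha$ lie in the maximal isotropic subspace $M = \bigcup_\alpha F''_\alpha$, so $\langle u, x\rangle = 0$ and the generator acts by $u \mapsto \langle u, y \rangle x \in F''_\alpha$. Fix $\alpha$: if there exists $y_0 \in (F'_\alpha)^\bot$ with $\langle u, y_0 \rangle \neq 0$ --- equivalently, if $u \notin (F'_\alpha)^{\bot\bot} = \overline{F'_\alpha}$ --- then, rescaling $y_0$ so that $\langle u, y_0\rangle = 1$ and letting $x$ run over $F''_\alpha$, one recovers all of $F''_\alpha$ inside $\St_\F \cdot u$; if instead $u \in \overline{F'_\alpha}$, this $\alpha$ contributes nothing. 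Hence $\St_\F \cdot u = \sum_{\alpha :\, u \notin \overline{F'_\alpha}} F''_\alpha$.

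The remaining task is to identify this sum. Let $\alpha_u$ be the immediate predecessor-successor pair of $u$, so $F'_{\alpha_u} = F'_u$ and $F''_{\alpha_u} = F''_u$. Since $\F$ is a closed generalized flag, $\overline{F'_\alpha}$ is either $F'_\alpha$ or $F''_\alpha$ for each $\alpha$. For $\alpha < \alpha_u$ one has $\overline{F'_\alpha} \subseteq F''_\alpha \subseteq F'_u$, and $u \notin F'_u$, so $u \notin \overline{F'_\alpha}$ and the contribution is $F''_\alpha$; the union of these over $\alpha < \alpha_u$ is $F'_u$, by the standard description of $F'_u$ used already in the proof of Lemma \ref{slinjective}. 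For $\alpha > \alpha_u$ one has $u \in F''_u \subseteq F'_\alpha \subseteq \overline{F'_\alpha}$, so there is no contribution. For the pair $\alpha_u$ itself, the contribution is $F''_u$ exactly when $u \notin \overline{F'_u}$, i.e.\ when $\overline{F'_u} = F'_u$. Assembling the cases gives $\St_\F \cdot u = F''_u$ when $\overline{F'_u} = F'_u$ and $\St_\F \cdot u = F'_u$ when $\overline{F'_u} = F''_u$; since every immediate successor of a closed generalized flag is closed, $F''_u$ is closed and so $\overline{\St_\F \cdot u} = F''_u$ in both cases.

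I do not anticipate a real obstacle: the only substantive input, the formula for $\St_\F$, is already established in Lemma \ref{soiso}, and everything else is the bookkeeping above. The one place demanding care is the borderline pair $\alpha = \alpha_u$, where one must use closedness of $\F$ to decide whether $u$ lies in $\overline{F'_u}$ --- and it is precisely this dichotomy that produces the two branches in the statement.
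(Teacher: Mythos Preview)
Your proposal is correct and follows essentially the same approach as the paper: both use the formula $\St_\F = \sum_\alpha F''_\alpha \wedge (F'_\alpha)^\bot$ from Lemma \ref{soiso}, compute $(x\otimes y - y\otimes x)\cdot u = \langle u,y\rangle x$ via isotropy of $M$, and arrive at $\St_\F \cdot u = \bigcup_{u \notin \overline{F'_\alpha}} F''_\alpha$. The only difference is that the paper stops there with ``the lemma follows easily,'' whereas you spell out the case analysis on $\alpha$ relative to $\alpha_u$ explicitly.
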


\begin{proof}
Let $\F = \{ F'_\alpha ,F''_\alpha \}_{\alpha \in A}$, and let $u \in  \bigcup_\alpha F''_\alpha$.  Lemma \ref{soiso} states that $\St_\F = \sum_\alpha F''_\alpha \wedge (F'_\alpha)^\bot$.  Fix $\beta \in A$, and let $x \in F''_\beta$ and $y \in (F'_\beta)^\bot$.  Then $(x \otimes y - y \otimes x)\cdot u = \langle u , y \rangle x - \langle u , x \rangle y = \langle u , y \rangle x$, since $\langle \bigcup_\alpha F''_\alpha, \bigcup_\alpha F''_\alpha \rangle = 0$.  Hence $\St_\F \cdot u = \bigcup_{u \notin \overline{F'_\beta}} F''_\beta$.  The lemma follows easily.
\end{proof}

\begin{lemma} \label{controlmax}
Suppose $\F = \{F'_\alpha , F''_\alpha \}_{\alpha \in A}$ and $\G =\{G'_\beta , G''_\beta \}_{\beta \in B}$ are maximal closed isotropic generalized flags in $V$ with $\St_\F \subset \St_\G$.  If $\bigcup_{\alpha \in A} F''_\alpha \neq \bigcup_{\beta \in B} G''_\beta$, then $A$ and $B$ have maximal elements $\infty$ with $F'_\infty = G'_\infty$ closed and $\dim (F'_\infty)^\bot / F'_\infty = 2$. 
\end{lemma}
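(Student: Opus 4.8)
Write $M_\F := \bigcup_{\alpha \in A} F''_\alpha$ and $M_\G := \bigcup_{\beta \in B} G''_\beta$; by hypothesis these are maximal isotropic subspaces of $V$ and $M_\F \neq M_\G$. The plan is to compare the two flags via the cyclic-vector computation of Lemma \ref{socyclic}. For each $\alpha \in A$ pick $u_\alpha \in F''_\alpha \setminus F'_\alpha$; then $\overline{\St_\F \cdot u_\alpha} = F''_\alpha$, and since $\St_\F \subset \St_\G$ we get $F''_\alpha = \overline{\St_\F \cdot u_\alpha} \subset \overline{\St_\G \cdot u_\alpha}$. The first step is to show that $u_\alpha \in M_\G$ for all but possibly one $\alpha$, so that Lemma \ref{socyclic} applies on the $\G$ side and gives $\overline{\St_\G \cdot u_\alpha} = G''_{u_\alpha}$; I would argue that if $u_\alpha \notin M_\G$ then $\St_\G \cdot u_\alpha$ is not contained in the isotropic subspace $M_\G$ (using the explicit formula $\St_\G = \sum_\beta G''_\beta \wedge (G'_\beta)^\bot$ from Lemma \ref{soiso}, together with $\langle M_\G, M_\G\rangle = 0$), contradicting $F''_\alpha \subset \overline{\St_\G \cdot u_\alpha}$ once $F''_\alpha$ is isotropic and large enough. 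This forces $M_\F \subseteq \overline{M_\G} = M_\G$, and by symmetry (the hypothesis is not symmetric, but the conclusion $M_\F \subsetneq M_\G$ and $M_\G \setminus M_\F \neq \emptyset$ will follow) we land in the situation where $M_\F$ is a closed isotropic subspace strictly contained in $M_\G$, with $M_\G \subseteq M_\F^\bot$ since $M_\G$ is isotropic and contains $M_\F$.

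The second step is to identify $F'_\infty$. Having shown $F''_\alpha = G''_{u_\alpha}$ for the relevant $\alpha$, the flag $\F$ (minus at most one pair) embeds into $\G$ as a sub-chain of successors; since $M_\F = \bigcup_\alpha F''_\alpha$ and $M_\F \subsetneq M_\G$, there is a largest $G''_\beta$ with $G''_\beta \subseteq M_\F$, and I claim $A$ has a maximal element $\infty$ with $F''_\infty = M_\F$. Indeed, $M_\F$ is closed (it is $\bigcap_{\alpha} (F'_\alpha)^\bot$-type, being the increasing union — here one uses that $M_\F = \overline{M_\F}$, which holds because $M_\F$ is a maximal isotropic subspace that is closed by Proposition \ref{somaximalisotropic}-type reasoning, or directly because $M_\F^{\bot\bot}$ is isotropic and $\supseteq M_\F$). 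So $M_\F \in \F$, and since every successor $F''_\alpha \subseteq M_\F$, the subspace $M_\F$ is the top of $\F$; thus $\infty \in A$ with $F''_\infty = M_\F$, and $F'_\infty = \bigcup_{\alpha < \infty} F''_\alpha$. Similarly $F'_\infty = G'_\infty$: the pairs below $\infty$ in $\F$ coincide with pairs in $\G$ below the corresponding spot, so $F'_\infty$ is a predecessor in $\G$ as well, namely $G'_\infty$ where $G''_\infty \supseteq M_\F$ is the next successor; and $G'_\infty = F'_\infty$ since both are the union of all the common smaller successors.

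The third step computes the codimension. We have $F'_\infty \subseteq M_\F = F''_\infty$ with $\dim F''_\infty / F'_\infty = 1$ (good pair of the maximal closed isotropic flag $\F$, since $F'_\infty = \overline{F'_\infty}$ because there is a successor $G''$ of $\G$ strictly between — actually $F'_\infty$ is closed because $F'_\infty = G'_\infty$ and below it the flags agree, and the relevant predecessors in a maximal closed isotropic flag that are "good" are closed; alternatively, $F'_\infty$ closed follows because $\overline{F'_\infty}$ is isotropic, $\b$-stable in spirit, and sits between $F'_\infty$ and a proper successor). Now $M_\F$ is a maximal isotropic subspace, so $\dim M_\F^\bot / M_\F \leq 1$ in the symmetric case, giving $\dim (F'_\infty)^\bot / F'_\infty = \dim (F'_\infty)^\bot / M_\F + \dim M_\F / F'_\infty \leq 1 + 1 = 2$; and it is exactly $2$ because $M_\G \neq M_\F$ is another isotropic subspace containing $F'_\infty$, so $(F'_\infty)^\bot / F'_\infty$ contains two distinct lines worth of isotropic vectors, forcing dimension $\geq 2$. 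Hence $\dim (F'_\infty)^\bot / F'_\infty = 2$, as required.

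The main obstacle I expect is the first step: rigorously ruling out $u_\alpha \notin M_\G$ and thereby pinning down $M_\F \subseteq M_\G$. The cyclic computation $\overline{\St_\G \cdot u} = F''_u$ in Lemma \ref{socyclic} is stated only for $u \in \bigcup_{F \in \F} F = M_\G$, so for $u \notin M_\G$ one must directly analyze $\St_\G \cdot u = \{\langle u, y\rangle x - \langle u, x\rangle y : x \in G''_\beta, y \in (G'_\beta)^\bot\}$-span and show it cannot contain the isotropic subspace $F''_\alpha$ (e.g. it contains the vector $u$ itself scaled, or vectors outside any maximal isotropic), contradicting $F''_\alpha \subset \overline{\St_\G \cdot u_\alpha}$. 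Handling the lone possible exceptional index (the unique $\gamma \in A$ with $F'_\gamma$ closed but $u_\gamma$ possibly problematic) and confirming it does not obstruct the conclusion will require the same care as in the proof of Lemma \ref{slinjective}. Everything else is bookkeeping with chains and closures.
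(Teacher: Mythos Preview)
Your first step aims at a conclusion that is impossible, and the rest of the argument is built on it. Since $\F$ and $\G$ are \emph{maximal} closed isotropic generalized flags, both $M_\F$ and $M_\G$ are maximal isotropic subspaces of $V$. Two distinct maximal isotropic subspaces can never satisfy $M_\F \subsetneq M_\G$; the containment you are trying to establish would immediately force $M_\F = M_\G$, contrary to hypothesis. So the sentence ``we land in the situation where $M_\F$ is a closed isotropic subspace strictly contained in $M_\G$'' describes a situation that cannot occur, and the bookkeeping in your second and third steps (which repeatedly uses $M_\F \subsetneq M_\G$) collapses. Relatedly, your dimension count in step three would give $\dim (F'_\infty)^\bot / F'_\infty \leq 3$, not $\leq 2$, once you track the chain $F'_\infty \subset M_\F \subset M_\F^\bot \subset (F'_\infty)^\bot$ correctly; the paper has to do real work to rule out dimension $3$.

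The correct geometric picture, and the one the paper establishes, is that $M_\F$ and $M_\G$ are incomparable, and the relevant common subspace is $M_\F \cap M_\G$. The paper fixes $m \in M_\F \setminus M_\G$, takes the pair $F'_\alpha \subset F''_\alpha$ with $m \in F''_\alpha \setminus F'_\alpha$, and uses that every $m \wedge y$ with $y \in (F'_\alpha)^\bot$ lies in $\St_\G$ to force $(F'_\alpha)^\bot \subset M_\G \oplus \C m$; this gives $M_\F = (M_\F \cap M_\G) \oplus \C m$ in one stroke, and symmetrically $M_\G = (M_\F \cap M_\G) \oplus \C n$. A separate explicit computation with a well-chosen element of $\St_\F$ then excludes $\dim (M_\F \cap M_\G)^\bot / (M_\F \cap M_\G) = 3$, leaving $2$. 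Finally one shows directly that any $F''_\alpha$ not contained in $M_\F \cap M_\G$ must already equal $M_\F$ (and likewise for $\G$), which produces the maximal indices $\infty$ with $F'_\infty = G'_\infty = M_\F \cap M_\G$. None of this goes through Lemma~\ref{socyclic}; that lemma only computes $\St_\G \cdot u$ for $u \in M_\G$, and the crucial vectors here lie outside $M_\G$.
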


\begin{proof}
Let $M := \bigcup_{F \in \F} F$ and $N := \bigcup_{G \in \G} G$, and suppose $M \neq N$. The maximality of $\F$ and $\G$ implies that both $M$ and $N$ are maximal isotropic subspaces.  Thus neither $M$ nor $N$ contains the other, and there exist $m \in M \setminus N$ and $n \in N \setminus M$.  There exists $\alpha \in A$ for which $m \in F''_\alpha \setminus F'_\alpha$.  For any $y \in (F'_\alpha)^\bot$, it holds that $m \otimes y - y \otimes m \in \St_\F \subset \St_\G$.  Since $m \notin N$, indeed $y - cm \in N$ for some $c \in \C$.  Hence $(F'_\alpha)^\bot \subset N \oplus \C m$.  As a result $M \subset M^\bot \subset (F'_\alpha)^\bot \subset N \oplus \C m$.  Hence $M = (M \cap N) \oplus \C m$.  Of course $M \cap N$ is necessarily closed, being the intersection of two closed subspaces of $V$. 

Consider the chain $M \cap N \subset M \subset M^\bot \subset (M \cap N)^\bot$.  Since $\dim M^\bot / M \leq 1$, and $ \dim M / (M \cap N) = \dim (M \cap N)^\bot / M^\bot =1$, it must be that $\dim (M \cap N)^\bot / (M \cap N)$ is either $2$ or $3$.

Also note that $n \in (M \cap N)^\bot$, since $N$ is isotropic.  Observe that $M \cap N \subset N \subset N^\bot \subset (M \cap N)^\bot$, and since $\dim N^\bot / N \leq 1$, and $\dim N / (M \cap N) = \dim (M \cap N)^\bot / N^\bot $, it must be the case that $\dim N / (M \cap N) = 1$.  Thus $N = (M \cap N) \oplus \C n$.

Suppose, for the sake of a contradiction, that $\dim (M \cap N)^\bot / (M \cap N) = 3$.  Then there exist $u \in M^\bot \setminus M$ and $v \in (M \cap N)^\bot \setminus M^\bot$ such that $\langle u , u \rangle = \langle m ,v \rangle = 1$, and $\langle u , v \rangle = \langle v , v \rangle = 0$.  So $n = am + bu + cv + x$ for some $a,b,c \in \C$ and $x \in M \cap N$.  Now $m \otimes u - u \otimes m \in \St_\F \subset \St_\G$, and $\St_\G \cdot N \subset N$, so 
\begin{eqnarray*}
(m \otimes u - u \otimes m ) \cdot n & = & (m \otimes u - u \otimes m) \cdot ( am + bu + cv + x) \\
& = & \langle  am + bu + cv + x , u \rangle m - \langle  am + bu + cv + x , m \rangle u \\
& = & bm - cu \\
& \in & N = (M \cap N) \oplus \C(am + bu + cv + x).
\end{eqnarray*}
Therefore $(bm - cu) - \lambda ( am + bu + cv) \in M \cap N$ for some $\lambda \in \C$.  So $(b - \lambda a)m - (c + \lambda b) u - \lambda c v = 0$, which implies $b = c = 0$.  It follows that $N = M$.  This contradicts the hypothesis that $M \neq N$.  It follows that $\dim (M \cap N)^\bot / (M \cap N) = 2$.  

Since $n \in (M \cap N)^\bot \setminus M^\bot$, necessarily $\langle m , n \rangle \neq 0$.  Assume without loss of generality that $\langle m , n  \rangle = 1$.  It remains to show that $A$ and $B$ have elements $\infty$ such that $F'_\infty = G'_\infty = M \cap N$.

We will show that if $F''_\alpha \nsubseteq M \cap N$, then $F''_\alpha = M$.  Suppose there exists $z \in F''_\alpha \setminus M \cap N$.  Rescaling $z$, it holds that $z = m + w$ for some $w \in M \cap N$.  For any $x \in M \cap N$, observe that $x \otimes n - n \otimes x \in \St_\F$, and $(x \otimes n - n \otimes x) \cdot z = (x \otimes n - n \otimes x) \cdot (m + w) = \langle m + w , n \rangle x - \langle m + w , x \rangle n = x $.  Hence $M \cap N \subset \St_\F \cdot F''_\alpha \subset F''_\alpha \subset M$.  Since $z \in F''_\alpha$ and $z \notin M \cap N$, it must be that $F''_\alpha =M$.

Likewise, if $G''_\beta \nsubseteq M \cap N$ then $G''_\beta = N$.  Suppose there exists $z \in G''_\beta \setminus M \cap N$.  The proof that $M \cap N \subset \St_\F \cdot G''_\beta$ is analogous to the argument in the above paragraph.  So $M \cap N \subset \St_\F \cdot G''_\beta \subset \St_\G \cdot G''_\beta \subset G''_\beta \subset N$.  Since $z \in G''_\beta$ and $z \notin M \cap N$, it must be that $G''_\beta = N$.

Thus each of $A$ and $B$ has a maximal element $\infty$ such that $F'_\infty = G'_\infty = M \cap N$.  It was already observed that $M \cap N$ is closed, and also that $\dim (M \cap N)^\bot / (M \cap N) = 2$. 
\end{proof}

\begin{lemma} \label{soinjective}
Suppose $\F$ and $\G$ are maximal closed isotropic generalized flags in $V$ with $\St_\F \subset \St_\G$.  If $\F \neq \G$, then $\F$ and $\G$ are twins; in either case, $\St_\F = \St_\G$.
\end{lemma}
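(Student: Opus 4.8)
The plan is to split into two cases according to whether the maximal isotropic subspaces $M := \bigcup_{F \in \F} F$ and $N := \bigcup_{G \in \G} G$ coincide; both are maximal isotropic by the maximality of $\F$ and $\G$. Write $\mathcal{S}_\F := \{ F''_\alpha \}_{\alpha \in A}$ and $\mathcal{S}_\G := \{ G''_\beta \}_{\beta \in B}$ for the sets of immediate successors. Recall that a generalized flag is determined by its set of successors, the immediate predecessor of $F''_\alpha$ being $\bigcup \{ F''_\gamma \in \mathcal{S}_\F : F''_\gamma \subsetneq F''_\alpha \}$; so to get $\F = \G$ it suffices to prove $\mathcal{S}_\F = \mathcal{S}_\G$.

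\emph{The case $M = N$.} Here I would prove the stronger statement $\F = \G$, which of course gives $\St_\F = \St_\G$. The first half of the argument follows the proof of Lemma \ref{slinjective}, with Lemma \ref{socyclic} in place of Lemma \ref{slcyclic} (and it is cleaner, since Lemma \ref{socyclic} has no analogue of the exceptional ``only good pair'' clause): for each $\alpha \in A$ one picks $u_\alpha \in F''_\alpha \setminus F'_\alpha \subseteq M = N$, gets $F''_\alpha = \overline{\St_\F \cdot u_\alpha} \subseteq \overline{\St_\G \cdot u_\alpha} = G''_{u_\alpha}$ from Lemma \ref{socyclic}, and then, using that $G'_{u_\alpha}$ is $\St_\G$-stable hence $\St_\F$-stable while $u_\alpha \notin G'_{u_\alpha}$, together with the fact from Lemma \ref{socyclic} that $F''_\alpha \subseteq \St_\F \cdot v$ for every $v \in M \setminus \overline{F'_\alpha}$, deduces $G'_{u_\alpha} \subseteq F'_\alpha$ if $F'_\alpha$ is closed and $G'_{u_\alpha} \subseteq F''_\alpha$ if $\overline{F'_\alpha} = F''_\alpha$ --- whence, in either case, squeezing $F''_\alpha$ between $G'_{u_\alpha}$ and $G''_{u_\alpha}$ and using that $F'_\alpha$, respectively $F''_\alpha$, is closed forces $F''_\alpha = G''_{u_\alpha}$. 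So $\mathcal{S}_\F \subseteq \mathcal{S}_\G$. The reverse inclusion is the genuinely new point. Let $C := G''_\beta \in \mathcal{S}_\G$ and pick $x \in G''_\beta \setminus G'_\beta$; then the immediate predecessor $F'_x = \bigcup \{ F''_\alpha \in \mathcal{S}_\F : x \notin F''_\alpha \}$ lies, by $\mathcal{S}_\F \subseteq \mathcal{S}_\G$, inside $\bigcup \{ G''_\gamma \in \mathcal{S}_\G : x \notin G''_\gamma \} = G'_\beta \subsetneq C$, and $C$ is closed, being an immediate successor of the closed flag $\G$. Since $\F$ is maximal, the pair $F'_x \subset F''_x$ is either good of codimension $1$, so $F''_x = F'_x + \C x \subseteq C$, or has $\overline{F'_x} = F''_x$, so $F''_x = \overline{F'_x} \subseteq \overline{C} = C$; on the other hand $F''_x \in \mathcal{S}_\F \subseteq \mathcal{S}_\G$ contains $x$, so $F''_x \supseteq G''_x = C$. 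Hence $F''_x = C \in \mathcal{S}_\F$, so $\mathcal{S}_\F = \mathcal{S}_\G$ and $\F = \G$.

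\emph{The case $M \neq N$.} By Lemma \ref{controlmax}, $A$ and $B$ have maximal elements, both written $\infty$, with $L := F'_\infty = G'_\infty$ closed and $\dim L^\bot / L = 2$; since $\infty$ is maximal one has $F''_\infty = M$ and $G''_\infty = N$, and from the proof of Lemma \ref{controlmax} also $L = M \cap N$, $F''_\alpha \subseteq L$ for $\alpha \neq \infty$ and $G''_\beta \subseteq L$ for $\beta \neq \infty$. As $\dim L^\bot / L = 2$ there are exactly two maximal isotropic subspaces containing $L$, namely $M$ and $N$, so conditions (\ref{C2}) and (\ref{C3}) of the definition of twins hold. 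It remains to verify (\ref{C1}): that the truncated flags $\{ F'_\alpha , F''_\alpha \}_{\alpha \neq \infty}$ and $\{ G'_\beta , G''_\beta \}_{\beta \neq \infty}$ coincide; both are closed generalized flags in $L$ all of whose good pairs have codimension $1$. I would rerun the argument of the first case inside $L$: for $u \in L \subseteq M$, Lemma \ref{socyclic} gives $\St_\F \cdot u \subseteq \overline{\St_\F \cdot u} = F''_u \subseteq L$ (the index $\infty$ never contributes, since $u \in \overline{F'_\infty} = L$), and likewise for $\G$; so from $\St_\F \subseteq \St_\G$ the same reasoning shows the immediate successors of the truncated $\F$ lie among those of the truncated $\G$, and the finishing argument above then gives equality. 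Thus $\G = \tw(\F)$ and $\F$, $\G$ are twins.

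Finally, $\St_\F = \St_\G$ also in this case: by Lemma \ref{soiso}, $\St_\F = \sum_{\alpha \neq \infty} F''_\alpha \wedge (F'_\alpha)^\bot + M \wedge L^\bot$, and since $M, N \subseteq L^\bot$ with $M \cap N = L$ and $\dim L^\bot / L = 2$ one has $L^\bot = M + N$ and $M \wedge L^\bot = \bigwedge^2 L^\bot = N \wedge L^\bot$, so the two stabilizers agree once (\ref{C1}) is known. I expect the main difficulty to be precisely this last case: the truncated flags live in the isotropic subspace $L$, which is itself not maximal isotropic, so Lemmas \ref{soiso} and \ref{socyclic} are not directly available for them, and one must carry out the cyclic-vector computations on $\F$ and $\G$ themselves --- restricting attention to $u \in L$ so that the top pair $F'_\infty \subset F''_\infty$ drops out --- while checking that closedness and the codimension-$1$ condition on good pairs pass to the truncations.
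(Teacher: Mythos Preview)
Your proof is correct and follows essentially the same route as the paper's: split according to whether $M=N$, use Lemma~\ref{socyclic} together with $\St_\F$-stability of $G'_{u_\alpha}$ to get $F''_\alpha = G''_{u_\alpha}$, and in the unequal case invoke Lemma~\ref{controlmax} to isolate the top pair and then argue on the truncations.

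You are actually more careful than the paper at two points. First, the paper, having shown $\{F''_\alpha : \alpha \in A'\} \subseteq \{G''_\beta : \beta \in B'\}$, simply asserts equality of the flags ``since a generalized flag is determined by its successors''; you supply the missing reverse inclusion $\mathcal{S}_\G \subseteq \mathcal{S}_\F$ by the squeezing argument using maximality and closedness. Second, the paper explicitly omits the verification that $\St_\F = \St_{\tw(\F)}$, which you carry out via the formula of Lemma~\ref{soiso} and the observation $M \wedge L^\bot = \bigwedge^2 L^\bot = N \wedge L^\bot$. Your closing remark about why one must compute cyclic vectors in $\F$ and $\G$ themselves (rather than in the truncated flags, for which Lemma~\ref{socyclic} is not stated) is also apt, and your handling of it is correct.
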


\begin{proof}
Let $\F = \{ F''_\alpha , F'_\alpha \}_{\alpha \in A}$ and $\G = \{G''_\beta, G''_\beta \}_{\beta \in B}$.  If $\bigcup_{\alpha \in A} F''_\alpha \neq \bigcup_{\beta \in B} G''_\beta$, then let $\infty \in A , B$ be as in Lemma \ref{controlmax}, and define $A' := A \setminus \{ \infty \}$ and $B' := B \setminus \{ \infty \}$.  Otherwise, let $A':=A$ and $B' := B$.  For each $\alpha \in A'$ choose $u_\alpha \in F''_\alpha \setminus F'_\alpha$.  

Since $\St_\F \subset \St_\G$, of course $$\overline{\St_\F \cdot u_\alpha} \subset \overline{\St_\G \cdot u_\alpha}.$$  
Lemma \ref{controlmax} implies that $u_\alpha \in \bigcup_{\beta \in B'} G''_\beta$, so by Lemma \ref{socyclic} that is $F''_\alpha = F''_{u_\alpha} \subset G''_{u_\alpha}$. 

We will show that $F''_\alpha = G''_{u_\alpha}$ for all $\alpha \in A'$.  There are two cases to consider.

\begin{enumerate}
\item  Suppose $\overline{F'_\alpha} = F''_\alpha$.  Then Lemma \ref{slcyclic} implies that for any $u \notin F''_\alpha$, indeed $F''_\alpha \subset \St_\F \cdot u$.  Observe that $G'_{u_\alpha}$ is stable under $\St_\F$ and $u_\alpha \notin G'_{u_\alpha}$.  It follows that $G'_{u_\alpha} \subset F''_\alpha$.  Thus $G'_{u_\alpha} \subset F''_\alpha \subset G''_{u_\alpha}$.  Since $u_\alpha \in F''_\alpha$, it must be that $G'_{u_\alpha} \subsetneq F''_\alpha$.  Since $\G$ is a maximal closed generalized flag, necessarily $F''_\alpha = G''_{u_\alpha}$.

\item  Suppose $F'_\alpha$ is closed.   Then Lemma \ref{slcyclic} implies that for any $u \notin F'_\alpha$, indeed $F''_\alpha \subset \St_\F \cdot u$.  Observe that $G'_{u_\alpha}$ is stable under $\St_\F$ and $u_\alpha \notin G'_{u_\alpha}$.  It follows that $G'_{u_\alpha} \subset F'_\alpha$.  Thus $G'_{u_\alpha} \subset F'_\alpha \subset F''_\alpha \subset G''_{u_\alpha}$.  Since $\G$ is a maximal closed generalized flag, necessarily $F''_\alpha = G''_{u_\alpha}$.
\end{enumerate}

On the one hand, if $\bigcup_{\alpha \in A} F''_\alpha = \bigcup_{\beta \in B} G''_\beta$, then $\F = \G$, since a generalized flag is determined by its successors. If, on the other hand, $\bigcup_{\alpha \in A} F''_\alpha \neq \bigcup_{\beta \in B} G''_\beta$, then we have shown that $\{ F'_\alpha, F''_\alpha : \alpha \in A' \} = \{G'_\beta, G''_\beta : \beta \in B' \}$.  Lemma \ref{controlmax} implies that $F'_\infty = G'_\infty$ is a closed isotropic subspace with $\dim (F'_\infty)^\bot / F'_\infty = 2$.  There are precisely two maximal isotropic subspaces containing $F'_\infty$, and they must be $F''_\infty$ and $G''_\infty$, respectively.  Therefore $\G = \tw(\F)$.  We omit the proof of the fact that $\St_\F = \St_{\tw(\F)}$.
\end{proof}

The following result fully describes Borel subalgebras of $\so(V)$.

\begin{thm} \label{somain}
A subalgebra of $\so(V)$ is a Borel subalgebra if and only if it is the stabilizer of a maximal closed isotropic generalized flag in $V$.  Furthermore, a fiber of the map $$\F \mapsto \St_\F$$ from maximal closed isotropic generalized flags in $V$ to Borel subalgebras of $\so(V)$ is either a single maximal closed isotropic generalized flag which has no twin, or a pair of twins.
\end{thm}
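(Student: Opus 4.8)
The plan is to assemble the theorem from the lemmas of this section; the three claims to be verified are that every stabilizer of a maximal closed isotropic generalized flag is a Borel subalgebra, that every Borel subalgebra arises this way, and that the fibers are as described.

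I would first treat the ``only if'' direction. Let $\b \subset \so(V)$ be a Borel subalgebra. Proposition \ref{special} supplies a maximal closed generalized flag $\F$ in $V$ with $\b = \St_\F$ and with $\F \cup \F^\bot \cup \{ M , M^\bot \}$ a chain for some maximal isotropic $M \subset V$. Then $\F_{iso}$ is a maximal closed isotropic generalized flag: the union of its terms is the maximal isotropic subspace $M$ (the isotropic terms of $\F$ are exactly those contained in $M$), its good pairs all have codimension $1$ since $\F$ is maximal closed, and it inherits closedness from $\F$. It remains to see that $\St_\F = \St_{\F_{iso}}$. For this I would pass, as in the proof of Lemma \ref{soiso}, to a maximal generalized flag $\H$ refining $\F$ and still compatible with $M$, $M^\bot$ and the orthogonality operation, so that $\H_{iso}$ refines $\F_{iso}$. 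Lemma \ref{sosamestabilizer} gives $\St_\H = \St_{\H_{iso}}$; since the stabilizer of a maximal closed (respectively maximal closed isotropic) generalized flag coincides with the stabilizer of any generalized flag refining it — as already used in the proofs of Theorem \ref{slmain} and Lemma \ref{soiso} — we get $\St_\F = \St_\H = \St_{\H_{iso}} = \St_{\F_{iso}}$. Hence $\b$ is the stabilizer of the maximal closed isotropic generalized flag $\F_{iso}$.

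For the ``if'' direction, let $\F$ be a maximal closed isotropic generalized flag in $V$. By Lemma \ref{soiso} the subalgebra $\St_\F$ is locally solvable, hence is contained in a Borel subalgebra $\b$. By the previous paragraph $\b = \St_\G$ for some maximal closed isotropic generalized flag $\G$, so $\St_\F \subseteq \St_\G$; Lemma \ref{soinjective} then forces $\St_\F = \St_\G = \b$. Thus $\St_\F$ is a Borel subalgebra, and the map $\F \mapsto \St_\F$ is onto the set of Borel subalgebras. Finally, fix a Borel subalgebra $\b$ and consider its fiber. If $\F$ and $\G$ both have stabilizer $\b$ then $\St_\F \subseteq \St_\G$, so by Lemma \ref{soinjective} either $\F = \G$ or $\F$ and $\G$ are twins; since each maximal closed isotropic generalized flag has at most one twin, the fiber has at most two elements. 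If the fiber is a single flag $\F$, then $\F$ has no twin, for otherwise $\tw(\F)$ would also lie in the fiber, its stabilizer being $\St_\F$; and if the fiber has two elements, they are twins by Lemma \ref{soinjective}. This is exactly the asserted dichotomy.

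The only genuinely delicate point is the refinement step in the second paragraph: one must choose the maximal generalized flag $\H$ refining $\F$ so that $\H \cup \H^\bot \cup \{ M, M^\bot \}$ remains a chain (this is handled exactly as in the proofs of Propositions \ref{sochain} and \ref{spchain} and of Lemma \ref{soiso}) and then verify that $\H_{iso}$ refines $\F_{iso}$ while $\F_{iso}$ is maximal closed isotropic. Everything else is direct bookkeeping on top of Lemmas \ref{soiso}, \ref{sosamestabilizer} and \ref{soinjective}, the last of which (via Lemma \ref{controlmax}) carries the real content, including the appearance of twins.
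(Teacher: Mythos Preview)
Your proposal is correct and follows essentially the same route as the paper: use Proposition \ref{special} and Lemma \ref{sosamestabilizer} to show every Borel subalgebra is $\St_{\F_{iso}}$ for some maximal closed isotropic $\F_{iso}$, then use Lemma \ref{soiso} plus Lemma \ref{soinjective} to show every such stabilizer is a Borel subalgebra and to pin down the fibers.

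The one point where you differ from the paper is that you insert a refinement step before invoking Lemma \ref{sosamestabilizer}: you pass from the maximal \emph{closed} generalized flag $\F$ supplied by Proposition \ref{special} to a maximal generalized flag $\H$ with $\H \cup \H^\bot \cup \{M,M^\bot\}$ still a chain, apply the lemma to $\H$, and then argue $\St_\F = \St_\H$ and $\St_{\H_{iso}} = \St_{\F_{iso}}$. The paper instead applies Lemma \ref{sosamestabilizer} directly to $\F$. Your extra care is justified, since the lemma is stated for \emph{maximal} generalized flags; in fact its proof goes through verbatim for maximal closed ones (for every pair one still has $\dim (F'_\alpha)^\bot/(F''_\alpha)^\bot \leq 1$, since non-good pairs satisfy $(F'_\alpha)^\bot = (F''_\alpha)^\bot$), which is presumably what the paper has in mind. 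Either way the argument is the same in substance.
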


\begin{proof}
Let $\b$ be a Borel subalgebra of $\so(V)$.  Proposition \ref{special} states that $\b$ is the stabilizer of a maximal closed generalized flag $\F$ in $V$ with $\F \cup \F^\bot \cup \{M , M^\bot \}$ being a chain for some maximal isotropic subspace $M \subset V$.  By Lemma \ref{sosamestabilizer}, $\b = \St_{\F_{iso}}$.  Observe that $\F_{iso}$ is a maximal closed isotropic generalized flag in $V$, since the union of the isotropic subspaces in $\F$ must be $M$.  Hence every Borel subalgebra of $\so(V)$ is the stabilizer of a maximal closed isotropic generalized flag in $V$.

Let $\F$ be an arbitrary maximal closed isotropic generalized flag in $V$.  By Lemma \ref{soiso}, $\St_\F$ is locally solvable.  Hence there exists a Borel subalgebra $\b$ with $\St_\F \subset \b$.  We have seen that there is a maximal closed isotropic generalized flag $\G$ with $\b = \St_\G$.  It follows from Lemma \ref{soinjective} that $\St_\F = \St_\G$.  This means that $\St_\F$ is a Borel subalgebra.  Hence $\F \mapsto \St_\F$ gives a map from maximal closed isotropic generalized flags in $V$ to Borel subalgebras of $\so(V)$.  Proposition \ref{special} implies that the map is surjective.  Lemma \ref{soinjective} implies that if $\St_\F = \St_\G$, then either $\F = \G$, or $\F$ and $\G$ are twins.  Since $\St_\F = \St_{\tw(\F)}$ whenever $\F$ has a twin, we have shown that a fiber of the map is either a single maximal closed isotropic generalized flag which has no twin, or a pair of twins.
\end{proof}

\subsection{Borel subalgebras of $\sp_\infty$} \label{spsection}

In this section it is shown that Borel subalgebras of $\sp_\infty$ correspond to maximal closed isotropic generalized flags in the standard representation.  Let $\b \subset \sp(V)$ be a Borel subalgebra.  Here we denote by $\St_\F$ the stabilizer in $\sp(V)$ of any generalized flag $\F$ in $V$, and we denote by $\St_{\F,\gl}$ the stabilizer of $\F$ in $\gl(V,V)$.  Of course, $\St_\F = \St_{\F,\gl} \cap \sp(V)$.  If $X$ and $Y$ are subspaces of $V$, we denote their symmetrizer by $X \& Y := \{ x \otimes y + y \otimes x : x \in Y , y \in Y \} \subset \Sym^2(V)$.

\begin{lemma} \label{spsamestabilizer}
Let $\F$ be a maximal generalized flag in $V$ such that $\F \cup \F^\bot \cup \{ M \}$ is a chain for some maximal isotropic subspace $M \subset V$.  Then $\St_{\F_{iso}} = \St_\F$.
\end{lemma}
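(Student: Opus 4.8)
The plan is to follow the template of the proof of Lemma \ref{sosamestabilizer}, making the simplifications afforded by working in $\Sym^2(V)$ rather than $\bigwedge^2 V$. First I would establish the structural identity
$\St_\F = \sum_{\alpha \in A,\ F''_\alpha \subset M} F''_\alpha \& (F'_\alpha)^\bot$.
As before, the key point is that when $x \in F''_\alpha$ and $y \in (F'_\alpha)^\bot$, the hypothesis that $\F \cup \F^\bot$ is a chain (together with $\dim (F'_\alpha)^\bot / (F''_\alpha)^\bot \le 1$) forces $y \otimes x \in \St_{\F,\gl}$ as well, so the symmetrization map $x \otimes y \mapsto x \otimes y + y \otimes x$ carries $\sum_\alpha F''_\alpha \otimes (F'_\alpha)^\bot$ into $\St_\F$ and is $2 \cdot \mathrm{Id}$ on $\St_\F$, hence surjective onto it. Thus $\St_\F$ is spanned by elements $x \otimes y + y \otimes x$; one then reduces the index set to those $\alpha$ with $F''_\alpha \subset M$ exactly as in the symplectic analogue, noting that since $M = M^\bot$ here there is no exceptional diagonal term $m \otimes m$ to worry about (this is the promised simplification over the $\so$ case).

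Next I would write down
$\St_{\F_{iso},\gl} = \sum_{F''_\alpha \subset M} F''_\alpha \otimes (F'_\alpha)^\bot + V \otimes M$,
since this is the stabilizer in $\gl(V,V)$ of the generalized flag $\F_{iso} \cup \{ M \subset V \}$ (using $M = M^\bot$). Given $Z \in \St_{\F_{iso}}$, decompose $Z = X + Y$ with $X \in \sum_{F''_\alpha \subset M} F''_\alpha \otimes (F'_\alpha)^\bot$ and $Y \in V \otimes M$. Now use $Z = \sigma(Z)$: rearranging gives $Y - \sigma(X) = \sigma(Y) - X$, where the left side lies in $V \otimes M$ and the right side in $M \otimes V$, so $Y - \sigma(X) \in (V \otimes M) \cap (M \otimes V) = M \otimes M$; applying $\sigma$ shows it is fixed by $\sigma$, hence lies in $\Sym^2 M = M \& M$. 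Call this element $\eta$. Then $Z = X + \sigma(X) + \eta$, and $X + \sigma(X) \in \St_\F$ by the first paragraph. Since $M = M^\bot$ is itself (the union of) a member of $\F$ if $M \in \F$, or more simply since $M \otimes M \subset \St_{\F,\gl}$ and $M \& M \subset \Sym^2(V)$, we get $\eta \in \St_\F$, whence $Z \in \St_\F$. Combined with the trivial inclusion $\St_\F \subset \St_{\F_{iso}}$, this gives equality.

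The main obstacle, such as it is, is purely bookkeeping: verifying carefully that $y \otimes x$ lands in $\St_{\F,\gl}$ under the chain hypothesis, and tracking the reduction of the spanning set to indices with $F''_\alpha \subset M$. Both of these are line-for-line parallels of the $\so$ argument in Lemma \ref{sosamestabilizer}, with the symmetric form replacing the antisymmetric one and with $M = M^\bot$ eliminating the case distinction on whether $M \subsetneq M^\bot$; I do not expect any genuinely new difficulty.
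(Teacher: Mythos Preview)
Your proposal is correct and follows essentially the same approach as the paper's own proof: the paper likewise establishes $\St_\F = \sum_{F''_\alpha \subset M} F''_\alpha \& (F'_\alpha)^\bot$ via the symmetrization map, computes $\St_{\F_{iso},\gl}$, decomposes $Z = X + Y$, and uses $Z = \sigma(Z)$ to extract $\eta := Y - \sigma(X) \in \Sym^2(M) \subset \St_\F$, concluding $Z = X + \sigma(X) + \eta \in \St_\F$. The simplifications you note over the $\so$ case (no $M \subsetneq M^\bot$ pair, no stray $m \otimes m$ term) are exactly those exploited in the paper.
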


\begin{proof}
Clearly $\St_\F \subset \St_{\F_{iso}} $.  Let $Z \in \St_{\F_{iso}}$ be arbitrary.  

Let $\F = \{ F'_\alpha, F''_\alpha \}_{\alpha \in A}$.  We first show that $\St_\F = \sum_{\alpha \in A , F''_\alpha \subset M} F''_\alpha \& (F'_\alpha)^\bot$.

For any $x \in F''_\alpha$ and $y \in (F'_\alpha)^\bot$, we know on the one hand that $x \otimes y \in \St_{\F,\gl}$, but on the other hand from the fact that $\F \cup \F^\bot$ is a chain, we find that also $y \otimes x \in \St_{\F,\gl}$.  In detail, we have $y \in F''_\beta \setminus F'_\beta$ for some $\beta \in A$.  Since $\F \cup \F^\bot$ is a chain, and $y \in (F'_\alpha)^\bot$ and $y \notin F'_\beta$, we have $F'_\beta \subsetneq (F'_\alpha)^\bot$.  Moreover since $\dim (F'_\alpha)^\bot / (F''_\alpha)^\bot \leq 1$, we have $F'_\beta \subset (F''_\alpha)^\bot$, and thus $F''_\alpha \subset (F''_\alpha)^{\bot \bot} \subset (F'_\beta)^\bot$.  So $x \in (F'_\beta)^\bot$, and we see that $y \otimes x \in F''_\beta \otimes (F'_\beta)^\bot \subset \St_{\F , \gl}$.
Hence the map of vector spaces (which is \emph{not} a map of Lie algebras):
\begin{eqnarray*}
\varphi: \sum_{\alpha \in A} F''_\alpha \otimes (F'_\alpha)^\bot & \rightarrow & \Sym^2(V) \\
x \otimes y & \mapsto & x \otimes y + y \otimes x
\end{eqnarray*}
in fact has its image in $\b$.  From the fact that $\varphi |_\b = 2 \cdot \mathrm{Id}$, we find that $\varphi$ maps surjectively onto $\b$.  Since $\sum F''_\alpha \otimes (F'_\alpha)^\bot$ is spanned by elements of the form $x \otimes y$, with $x \in F''_\alpha$ and $y \in (F'_\alpha)^\bot$, we see that $\b$ is spanned by elements of the form $x \otimes y + y \otimes x$, with $x \in F''_\alpha$ and $y \in (F'_\alpha)^\bot$.

In fact, $\b$ is spanned by elements of the form $x \otimes y + y \otimes x$, with $x \in F''_\alpha \subset M$ and $y \in (F'_\alpha)^\bot$.  (Explicitly, if $M \subset F'_\alpha$, then $y \in (F'_\alpha)^\bot \subset M^{\bot \bot} = M$.) 

Now $$\St_{\F_{iso} , \gl} =  \sum_{F''_\alpha \subset M} F''_\alpha \otimes (F'_\alpha)^\bot + V \otimes M,$$ because it is the stabilizer of  $\F_{iso} \cup \{M \subset V \}$, which is a generalized flag in $V$.  Then $Z = X + Y$ for some $X \in \sum_{F''_\alpha \subset M} F''_\alpha \otimes (F'_\alpha)^\bot$ and $Y \in V \otimes M$.

Since $Z = \sigma(Z)$, we have $X + Y = \sigma(X) + \sigma(Y)$.  So $Y - \sigma(X) = \sigma(Y) - X$, and the left hand side of the equation is clearly an element of $V \otimes M$, while the righthand side is clearly an element of $M \otimes V$.  So $Y - \sigma(X) \in (V \otimes M) \cap (M \otimes V) = M \otimes M$.  Now $\sigma(Y - \sigma(X)) = \sigma(Y) - X = Y - \sigma(X)$, and therefore $Y - \sigma(X) \in (M \otimes M) \cap \Sym^2(V) = \Sym^2(M)$.  Let $\eta := Y - \sigma(X) \in \Sym^2(M)$.  So $Z = X + \sigma(X) + \eta$.  Clearly $X + \sigma(X) \in \St_\F$.  Since $\eta \in \Sym^2(M) \subset \St_\F$, we have $Z \in \St_\F$.
\end{proof}

Lemmas \ref{spiso}, \ref{spcyclic}, and \ref{spinjective} may be proved in the same manner as the analogous statements in Section \ref{sosection} with only straightforward modifications needed, so the proofs are omitted.

\begin{lemma} \label{spiso}
Let $\F = \{F'_\beta, F''_\beta \}_{\beta \in B}$ be a maximal closed isotropic generalized flag in $V$.  Then $\St_\F = \sum_\beta F''_\beta \& (F'_\beta)^\bot$, and moreover $\St_\F$ is locally solvable.
\end{lemma}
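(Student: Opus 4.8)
The plan is to mimic the proof of Lemma \ref{soiso} in the symplectic setting, replacing the wedge product $\wedge$ by the symmetrizer $\&$ and Lemma \ref{sosamestabilizer} by Lemma \ref{spsamestabilizer}. Let $M := \bigcup_{\beta \in B} F''_\beta$, which is a maximal isotropic subspace of $V$, hence $M = M^\bot$ since the form is antisymmetric. First I would choose a maximal chain $\Ch$ in $V$ containing $\F \cup \F^\bot$, and set $\H := \fl(\Ch)$. Since $\F$ is a maximal closed isotropic generalized flag in $M$, the generalized flag $\H_{iso}$ refines $\F$, and because $\F$ is already maximal as a generalized flag in $M$, one gets $\St_{\H_{iso}} = \St_\F$ together with the asserted formula $\St_\F = \sum_\beta F''_\beta \& (F'_\beta)^\bot$ from the computation in the proof of Lemma \ref{spsamestabilizer}.

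The key step is then to verify that $\H \cup \H^\bot \cup \{ M \}$ is a chain, so that Lemma \ref{spsamestabilizer} applies and gives $\St_\H = \St_{\H_{iso}}$. Here $M = M^\bot$ is automatically compatible with $\Ch$, hence with $\H$ and $\H^\bot$. Given $H, I \in \H$, I must show one of $H^\bot \subset I$, $I \subset H^\bot$. If both $H$ and $I$ are isotropic then $I \subset M = M^\bot \subset H^\bot$; if both are coisotropic then $H^\bot \subset M = M^\bot \subset I$. The remaining cases are $H \subset M \subset I$ and $I \subset M \subset H$ (using $M = M^\bot$), which I would handle exactly as in the proof of Lemma \ref{soiso}: in the first case $F' \subset H \subset F''$ for an immediate predecessor-successor pair in $\F$, so $H^\bot$ is either $(F')^\bot$ or $(F'')^\bot$ by $\dim (F')^\bot / (F'')^\bot \le 1$, and compatibility with the chain $\F^\bot \cup \{I\}$ finishes it; in the second case one splits according to whether $H \subset (F'')^\bot$ or $(F')^\bot \subset H$, and in the latter subcase one either finds some $F \in \F$ with $F^\bot \subset H$ and $F \subsetneq F'$ (giving $H \subset \overline{F} \subset F' \subset I$) or else $H \subset F^\bot$ for all such $F$, forcing $H = (F')^\bot$ and hence $H^\bot = \overline{F'} \in \{F', F''\}$, which is compatible with $I$.

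Once $\H \cup \H^\bot \cup \{M\}$ is known to be a chain, Lemma \ref{spsamestabilizer} yields $\St_\H = \St_{\H_{iso}} = \St_\F$. Since $\St_{\H,\gl}$ is locally solvable by Proposition \ref{locsolvable} (being the stabilizer of a maximal generalized flag, or of any maximal generalized flag refining $\H$), and $\St_\F = \St_\H \subset \St_{\H,\gl}$, it follows that $\St_\F$ is locally solvable. The explicit formula $\St_\F = \sum_\beta F''_\beta \& (F'_\beta)^\bot$ is precisely the formula for $\St_\H = \St_{\H_{iso}}$ extracted from the proof of Lemma \ref{spsamestabilizer}, restricted to the isotropic pairs (those with $F''_\beta \subset M$), which are all of them here.

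I expect the main obstacle to be the careful case analysis showing $\H \cup \H^\bot \cup \{M\}$ is a chain, specifically the subcase $(F')^\bot \subset H$ with $H \subset F^\bot$ for every $F \in \F$ strictly below $F'$; this requires the identity $(F')^\bot = (\bigcup_{F \subsetneq F'} F)^\bot = \bigcap_{F \subsetneq F'} F^\bot$ and then pinning down $\overline{F'}$ as either $F'$ or $F''$ using that $\F$ is closed. Everything else is a routine transcription of the orthogonal case with $\wedge \rightsquigarrow \&$, $\bigwedge^2 \rightsquigarrow \Sym^2$, and $M^\bot \rightsquigarrow M$.
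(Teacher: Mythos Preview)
Your proposal is correct and follows exactly the approach the paper intends: the paper itself omits the proof, stating only that it ``may be proved in the same manner as the analogous statements in Section~\ref{sosection} with only straightforward modifications needed,'' and your transcription of the proof of Lemma~\ref{soiso} with $\wedge \rightsquigarrow \&$, $\bigwedge^2 \rightsquigarrow \Sym^2$, Lemma~\ref{sosamestabilizer} $\rightsquigarrow$ Lemma~\ref{spsamestabilizer}, and the simplification $M = M^\bot$ is precisely that. The case analysis for showing $\H \cup \H^\bot \cup \{M\}$ is a chain, including the delicate subcase you flag, carries over verbatim.
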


\begin{lemma} \label{spcyclic}
Let $\F$ be a maximal closed isotropic generalized flag in $V$.  If $u \in \bigcup_{F \in \F} F$, then 
$$\St_\F \cdot u = 
\begin{cases}
F''_u & \overline{F'_u} = F'_u \\
F'_u & \overline{F'_u} = F''_u.
\end{cases}$$
Thus $\overline{\St_\F \cdot u} = F''_u$.
\end{lemma}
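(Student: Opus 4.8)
The plan is to mirror the proof of Lemma~\ref{socyclic}, replacing the antisymmetrizer by the symmetrizer and invoking Lemma~\ref{spiso} in place of Lemma~\ref{soiso}. Write $\F = \{F'_\beta, F''_\beta\}_{\beta \in B}$ and set $M := \bigcup_{\beta \in B} F''_\beta$; by maximality of $\F$ this is a maximal isotropic subspace of $V$, so $\langle M, M\rangle = 0$ and $F''_\beta \subset M$ for every $\beta$. Fix a nonzero $u \in M$. By Lemma~\ref{spiso} the algebra $\St_\F$ is spanned by the elements $x \otimes y + y \otimes x$ with $x \in F''_\beta$ and $y \in (F'_\beta)^\bot$ for some $\beta \in B$.

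First I would compute the action of such a generator:
\[
(x \otimes y + y \otimes x) \cdot u = \langle u, y \rangle\, x + \langle u, x \rangle\, y .
\]
Since $u$ and $x$ both lie in the isotropic subspace $M$, we have $\langle u, x\rangle = 0$, so this equals $\langle u, y\rangle\, x$. For a fixed $\beta$, one can choose $y \in (F'_\beta)^\bot$ with $\langle u, y\rangle \neq 0$ precisely when $u \notin (F'_\beta)^{\bot\bot} = \overline{F'_\beta}$, and otherwise the contribution is $0$; since the $F''_\beta$ form a chain (so the union of those that occur is again a subspace), this yields
\[
\St_\F \cdot u \;=\; \bigcup_{\,u \notin \overline{F'_\beta}\,} F''_\beta .
\]

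It then remains to identify this union with the right-hand side of the stated formula. Let $\gamma \in B$ be the pair with $u \in F''_\gamma \setminus F'_\gamma$, so that $F'_u = F'_\gamma$ and $F''_u = F''_\gamma$; recall that $F'_\gamma = \bigcup_{\beta < \gamma} F''_\beta$, since in a generalized flag every member is the union of the members strictly below it. For $\beta < \gamma$ one has $\overline{F'_\beta} \subset F'_\gamma$ while $u \notin F'_\gamma$, so every such $\beta$ contributes; for $\beta > \gamma$ one has $u \in F''_\gamma \subset F'_\beta \subset \overline{F'_\beta}$, so no such $\beta$ contributes; and $\gamma$ itself contributes iff $u \notin \overline{F'_\gamma}$, which, since $\F$ is closed (so $\overline{F'_\gamma}$ is $F'_\gamma$ or $F''_\gamma$) and $u \in F''_\gamma$, happens iff $\overline{F'_\gamma} = F'_\gamma$. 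Hence the union is $F''_u$ when $\overline{F'_u} = F'_u$ and is $F'_u$ when $\overline{F'_u} = F''_u$, which is the claimed formula; the final equality $\overline{\St_\F \cdot u} = F''_u$ is then immediate, since $F''_u$ is closed in the first case and $\overline{F'_u} = F''_u$ in the second.

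There is essentially no genuine obstacle here beyond organizing the chain combinatorics, which is identical to that in Lemma~\ref{socyclic}. The only point deserving care is the vanishing $\langle u, x\rangle = 0$, which is exactly where isotropy of $M$ (equivalently, of each successor $F''_\beta$) is used; this is the same mechanism that drives the $\so$-analogue, and it is what allows the $\Sym^2$-computation to collapse to the single term $\langle u,y\rangle\,x$, just as the $\bigwedge^2$-computation did there.
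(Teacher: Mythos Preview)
Your proof is correct and follows exactly the approach indicated in the paper: the paper omits the proof of Lemma~\ref{spcyclic}, stating only that it is obtained from the proof of Lemma~\ref{socyclic} by straightforward modifications, and your argument carries out precisely those modifications (replacing $\wedge$ by $\&$ and Lemma~\ref{soiso} by Lemma~\ref{spiso}). The extra detail you supply in unpacking the union $\bigcup_{u \notin \overline{F'_\beta}} F''_\beta$ is fine and matches what the paper leaves implicit under ``the lemma follows easily.''
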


\begin{prop} \label{samemax}
If $\F$ and $\G$ are maximal closed isotropic generalized flags in $V$ with $\St_\F \subset \St_\G$, then $\bigcup_{F \in \F} F = \bigcup_{G \in \G} G$.
\end{prop}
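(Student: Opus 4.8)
The plan is to mirror the symplectic analogue of Lemma \ref{controlmax}, but to show that in the $\sp$ case the ``exceptional'' situation that produced twins in the $\so$ case cannot occur, forcing $M = N$ outright. Let $M := \bigcup_{F \in \F} F$ and $N := \bigcup_{G \in \G} G$; by maximality of $\F$ and $\G$ and the characterization of maximal isotropic subspaces in the antisymmetric case, $M = M^\bot$ and $N = N^\bot$. Suppose for contradiction that $M \neq N$; since neither contains the other, pick $m \in M \setminus N$ and $n \in N \setminus M$. First I would exploit $\St_\F \subset \St_\G$ exactly as in the beginning of Lemma \ref{controlmax}: choose $\alpha \in A$ with $m \in F''_\alpha \setminus F'_\alpha$, and for every $y \in (F'_\alpha)^\bot$ use that $m \otimes y + y \otimes m \in \St_\F \subset \St_\G$ together with $\St_\G \cdot N \subset N$ to conclude $\langle n, y \rangle m - \langle m, y\rangle n$ — wait, more carefully: apply $m \otimes y + y \otimes m$ to a suitable vector of $N$ — to deduce that $(F'_\alpha)^\bot \subset N \oplus \C m$. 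Since $M \subset M^\bot = M \subset \overline{F'_\alpha}^{\,\bot} = \dots$, actually since $M = M^\bot \subset (F'_\alpha)^\bot$, we get $M \subset N \oplus \C m$, hence $M = (M \cap N) \oplus \C m$.

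Next I would run the parallel argument with the roles of $\F$ and $\G$ partially swapped, or rather use the $\b$-stability directly, to get $N = (M \cap N) \oplus \C n$. Now form the chain $M \cap N \subset M = M^\bot \subset (M \cap N)^\bot$. Since $\dim M / (M \cap N) = \dim (M\cap N)^\bot / M^\bot = 1$, we find $\dim (M \cap N)^\bot / (M \cap N) = 2$, and $(M \cap N)^\bot = (M \cap N) \oplus \C m \oplus \C n$ with $\langle m, n \rangle \neq 0$ (rescale so $\langle m, n\rangle = 1$). This is the point where the symplectic case diverges from $\so$: in the antisymmetric setting one cannot enlarge $M \cap N$ to a new isotropic subspace of dimension one more by adjoining a self-orthogonal vector, because \emph{every} vector is self-orthogonal, so $M\cap N \oplus \C m$ and $M \cap N \oplus \C n$ are the only two maximal isotropics over $M \cap N$ — but now I claim even this dichotomy collapses. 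The key step: take any $x \in M \cap N$ and consider $x \otimes n + n \otimes x \in \St_\F$ (since $x \in F''_\alpha$ for suitable $\alpha$ and $n \in M^\bot \subset (F'_\alpha)^\bot$… this needs checking that $n \in (F'_\alpha)^\bot$ for the relevant $\alpha$, which follows because $F'_\alpha \subset M = M^\bot$ so $(F'_\alpha)^\bot \supset M \ni$ — no, $n \notin M$; rather $F'_\alpha$ isotropic $\subset M$, so $M \subset (F'_\alpha)^\bot$, and $n \in (M\cap N)^\bot$; one needs $F'_\alpha \subset M \cap N$, true when $F'_\alpha \subsetneq M$). Applying such operators to $m$ shows $M \cap N \subset \St_\F \cdot m \subset \St_\G \cdot m$, and since $m \notin N$ while $\St_\G$ stabilizes $N$, tracking $\St_\G \cdot m$ against the generalized flag $\G$ forces a contradiction with $m \notin \bigcup_{G} G = N$ unless $M \cap N$ is not contained in any $G''_\beta$ — producing the desired contradiction.

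Let me restate the endgame more cleanly, since that is where care is needed. Having established $(M\cap N)^\bot = (M \cap N) \oplus \C m \oplus \C n$ with $\langle m, n \rangle = 1$ and all other pairings among $m, n, M\cap N$ zero, I would show: for each $x \in M \cap N$, the element $x \otimes n + n \otimes x$ lies in $\St_\F$ (choosing $\alpha$ with $x \in F''_\alpha \subsetneq M$ and noting $n \in (F'_\alpha)^\bot$ since $F'_\alpha \subset M \cap N \subset (M\cap N)^\bot$ and $\langle F'_\alpha, n\rangle \subseteq \langle M \cap N, n \rangle = 0$). Then $(x \otimes n + n \otimes x) \cdot m = \langle m, n \rangle x + \langle m, x \rangle n = x$, so $M \cap N \subseteq \St_\F \cdot m \subseteq \St_\G \cdot m$. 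But $\St_\G \cdot m \subseteq G''_m$ where $G''_m$ is the successor in $\G$ of the pair containing $m$; since $m \notin N = \bigcup_\beta G''_\beta$, there is no such pair — i.e. $m$ is not in $\bigcup_{G \in \G} G$, so $\St_\G$ does not ``see'' $m$ as living in any flag member, yet $\St_\G \cdot m$ must still be a $\St_\G$-stable subspace. The contradiction is extracted by observing $\St_\G \cdot m \subseteq N \oplus \C m$ (since $(F'_\alpha)^\bot \subset N \oplus \C m$ from the first step and $\St_\G \subseteq \St_\F$-image lands there), combined with $\St_\G \cdot N \subseteq N$, forcing $M \cap N \subseteq N$ to be proper inside something of the form $N \cap (\text{proper stable subspace})$, which I would push to $M \cap N = M$, i.e. $m \in M \cap N$, contradicting $m \notin N$.

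\textbf{Main obstacle.} The delicate part is the last paragraph: correctly bookkeeping which immediate predecessor–successor pair of $\F$ contains a given auxiliary vector, and verifying the ``$n \in (F'_\alpha)^\bot$'' type membership claims so that the tensors $x \otimes n + n \otimes x$ genuinely lie in $\St_\F$ as described by Lemma \ref{spiso}. I expect that, as in Lemma \ref{spintersection}, the antisymmetry of the form is exactly what kills the dimension-$3$ alternative and the twin phenomenon, so the proof should be strictly shorter than that of Lemma \ref{controlmax}; the paper itself flags that ``Lemmas \ref{spiso}, \ref{spcyclic}, and \ref{spinjective} may be proved in the same manner as the analogous statements in Section \ref{sosection} with only straightforward modifications,'' and the present proposition is the place where ``straightforward modification'' means ``the exceptional branch is vacuous.''
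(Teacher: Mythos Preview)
Your approach misses the key simplification available in $\sp(V)$ and, as you yourself sense, the endgame is not airtight. The paper's proof is a four-line argument exploiting a feature of $\Sym^2(V)$ that has no analogue in $\bigwedge^2 V$: for any $m \in M$ the element $m \otimes m$ lies in $\Sym^2(M) \subset \St_\F$. One shows directly that $\langle M, N\rangle = 0$: if $m \in M$, $n \in N$ satisfy $\langle n, m\rangle \neq 0$, then $(m \otimes m)\cdot n = \langle n, m\rangle m$, so $m \in \St_\F \cdot N \subset \St_\G \cdot N \subset N$, contradicting isotropy of $N$. Hence $N \subset M^\bot = M$, and maximality gives $M = N$. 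No reduction to $\dim (M\cap N)^\bot / (M\cap N) = 2$ is needed; the whole apparatus you import from Lemma \ref{controlmax} is unnecessary here.

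Your proposed endgame also has real gaps. First, the passing remark that ``$M\cap N \oplus \C m$ and $M\cap N \oplus \C n$ are the only two maximal isotropics over $M\cap N$'' is false in the antisymmetric setting: every line in $(M\cap N)^\bot / (M\cap N)$ gives a maximal isotropic, since any vector is self-orthogonal. Second, to place $x \otimes n + n \otimes x$ in $\St_\F$ you need $n \in (F'_\alpha)^\bot$, i.e.\ $F'_\alpha \subset n^\bot$; since $\langle m, n\rangle \neq 0$, this requires $F'_\alpha \subset M \cap N$, which you have not established (it would follow from the ``$F''_\alpha \not\subset M\cap N \Rightarrow F''_\alpha = M$'' step in Lemma \ref{controlmax}, but you skipped it). Third, once $m \notin N$, there is no pair $G'_m \subset G''_m$ in $\G$ and no a priori bound on $\St_\G \cdot m$; your attempt to trap $\St_\G \cdot m$ inside $N \oplus \C m$ is not justified by anything you have shown. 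The clean way out is precisely the $m \otimes m$ trick, which in fact completes your argument too: once you have $m \in M \setminus N$ and $n \in N$ with $\langle m, n\rangle \neq 0$, apply $m \otimes m \in \St_\F \subset \St_\G$ to $n$ and you are done.
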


\begin{proof}
Let $M := \bigcup_{F \in \F} F$ and $N := \bigcup_{G \in \G} G$.  The maximality of $\F$ and $\G$ implies that both $M$ and $N$ are maximal isotropic subspaces.  We will show that $\langle M , N \rangle = 0$.  Suppose, for the sake of a contradiction, that there exist $m \in M$ and $n \in N$ such that $\langle n , m \rangle \neq 0$.  Then $(m \otimes m) \cdot n = \langle n , m \rangle m$.  Since $\Sym^2(M) \subset \St_\F$, we have shown that $m \in \St_\F \cdot N \subset \St_\G \cdot N \subset N$.  But $\langle n , m \rangle \neq 0$, which contradicts the fact that $N$ is isotropic.  Hence $\langle M , N \rangle = 0$, and $N \subset M^\bot  = M$.  By the maximality of $N$, we have $M = N$.
\end{proof}

\begin{lemma} \label{spinjective}
If $\F$ and $\G$ are maximal closed isotropic generalized flags in $V$ with $\St_\F \subset \St_\G$, then $\F = \G$.
\end{lemma}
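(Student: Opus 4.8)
The plan is to transcribe the proof of Lemma \ref{soinjective}, substituting Proposition \ref{samemax} for Lemma \ref{controlmax} and using the symplectic versions Lemma \ref{spiso}, Lemma \ref{spcyclic} in place of their $\so$ counterparts. The key conceptual point is that this substitution is exactly what makes the conclusion strictly stronger than in the orthogonal case: Proposition \ref{samemax} forces $\bigcup_{F\in\F}F=\bigcup_{G\in\G}G$ outright, so the branch of the $\so$ argument that produces a twin by allowing these maximal isotropic subspaces to differ by one dimension simply does not arise.

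Concretely, I would write $\F=\{F'_\alpha,F''_\alpha\}_{\alpha\in A}$ and $\G=\{G'_\beta,G''_\beta\}_{\beta\in B}$, and set $M:=\bigcup_\alpha F''_\alpha=\bigcup_\beta G''_\beta$ by Proposition \ref{samemax}. For each $\alpha\in A$ choose $u_\alpha\in F''_\alpha\setminus F'_\alpha$; since $u_\alpha\in F''_\alpha\subset M=\bigcup_{F\in\G}F$, Lemma \ref{spcyclic} applies to $u_\alpha$ in both flags, giving $\overline{\St_\F\cdot u_\alpha}=F''_{u_\alpha}=F''_\alpha$ and $\overline{\St_\G\cdot u_\alpha}=G''_{u_\alpha}$. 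The hypothesis $\St_\F\subset\St_\G$ then yields $F''_\alpha\subset G''_{u_\alpha}$.

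Next I would upgrade this to $F''_\alpha=G''_{u_\alpha}$ for every $\alpha$, by the same two-case split as in Lemma \ref{soinjective}: whether $\overline{F'_\alpha}=F''_\alpha$ or $F'_\alpha$ is closed, Lemma \ref{spcyclic} shows $F''_\alpha\subset\St_\F\cdot u$ for every $u$ lying outside the relevant predecessor of $\alpha$, while $G'_{u_\alpha}$ is an $\St_\F$-stable subspace not containing $u_\alpha$; hence $G'_{u_\alpha}\subsetneq F''_\alpha\subset G''_{u_\alpha}$, and since $\F$ has closed successors and $\G$ is a maximal closed (isotropic) generalized flag, the codimension-one gap forces $F''_\alpha=G''_{u_\alpha}$. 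Thus every successor of $\F$ is a successor of $\G$, and as both flags have union $M$ and a generalized flag is determined by its set of successors, $\F=\G$ (so in particular $\St_\F=\St_\G$).

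I expect no genuine obstacle here — by design this is a simplification of Section \ref{sosection} in which the symmetrizer $X\mathbin{\&}Y$ and Lemma \ref{spcyclic} replace $X\wedge Y$ and Lemma \ref{socyclic}. The one place to be careful is the appeal to Proposition \ref{samemax}: it is what eliminates the exceptional top pair that, in the orthogonal setting, would otherwise survive the cyclic-vector comparison and yield only twins rather than equality. Everything else is routine bookkeeping.
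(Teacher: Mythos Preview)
Your proposal is correct and is precisely the approach the paper intends: it explicitly states that Lemma \ref{spinjective} is proved ``in the same manner as the analogous statements in Section \ref{sosection} with only straightforward modifications,'' and your substitution of Proposition \ref{samemax} for Lemma \ref{controlmax} (together with Lemma \ref{spcyclic} for Lemma \ref{socyclic}) is exactly that modification. Your observation that Proposition \ref{samemax} collapses the twin branch of the $\so$ argument, so that $A'=A$ throughout and the conclusion is equality rather than ``equal or twins,'' is the key point and is handled correctly.
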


The following result fully describes Borel subalgebras of $\sp(V)$.

\begin{thm} \label{spmain}
A subalgebra of $\sp(V)$ is a Borel subalgebra if and only if it is the stabilizer of a maximal closed isotropic generalized flag in $V$.  Futhermore, the map from maximal closed isotropic generalized flags of $V$ to Borel subalgebras of $\sp(V)$ $$\F \mapsto \St_\F$$ is bijective.
\end{thm}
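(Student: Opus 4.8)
The plan is to mirror exactly the argument given for $\so(V)$ in Theorem~\ref{somain}, using the symplectic analogues that have already been assembled. First I would take an arbitrary Borel subalgebra $\b \subset \sp(V)$ and invoke Proposition~\ref{special}, which produces a maximal closed generalized flag $\F$ in $V$ with $\F \cup \F^\bot \cup \{M, M^\bot\}$ a chain for some maximal isotropic $M$; since $M = M^\bot$ in the symplectic case (by the characterization of maximal isotropic subspaces for an antisymmetric form recalled in the Preliminaries), this is the hypothesis of Lemma~\ref{spsamestabilizer}, so $\b = \St_\F = \St_{\F_{iso}}$. Then I would observe that $\F_{iso}$ is a maximal closed isotropic generalized flag, because the union of the isotropic subspaces appearing in $\F$ is forced to be $M$ itself. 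This shows every Borel subalgebra of $\sp(V)$ is the stabilizer of a maximal closed isotropic generalized flag.

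For the reverse direction, I would start from an arbitrary maximal closed isotropic generalized flag $\F$ in $V$. By Lemma~\ref{spiso}, $\St_\F$ is locally solvable, so it sits inside some Borel subalgebra $\b$; by the paragraph above, $\b = \St_\G$ for some maximal closed isotropic generalized flag $\G$, and $\St_\F \subseteq \St_\G$. Lemma~\ref{spinjective} then forces $\F = \G$, whence $\St_\F = \b$ is itself a Borel subalgebra. This simultaneously shows the map $\F \mapsto \St_\F$ lands in the set of Borel subalgebras and is surjective (via Proposition~\ref{special} plus the identification $\St_\F = \St_{\F_{iso}}$), and injectivity is precisely Lemma~\ref{spinjective}.

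The only point requiring slight care is the identification of the fiber structure: in the orthogonal case the fibers had size one or two because of twins, arising from the possibility $\dim(F'_\infty)^\bot/F'_\infty = 2$. Here the relevant input is Proposition~\ref{samemax} together with Lemma~\ref{spinjective}, which together say that $\St_\F \subseteq \St_\G$ already implies $\F = \G$ outright --- there is no symplectic analogue of twins, essentially because a maximal isotropic subspace for an antisymmetric form satisfies $M = M^\bot$ rather than $\dim M^\bot/M \le 1$, so there is no room to produce a second maximal isotropic subspace over a closed isotropic $L$ with $\dim L^\bot/L = 2$. Hence the map is a genuine bijection. I expect no real obstacle: the work has all been done in the cited lemmas, and this theorem is a short assembly of them; the main thing is simply to check that Lemma~\ref{spsamestabilizer}'s hypothesis (a chain $\F \cup \F^\bot \cup \{M\}$) is exactly what Proposition~\ref{special} delivers in the $\sp$ case, which it is since $M = M^\bot$ makes $\{M, M^\bot\}$ and $\{M\}$ the same condition.

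\begin{proof}
Let $\b$ be a Borel subalgebra of $\sp(V)$. By Proposition~\ref{special}, $\b = \St_\F$ for a maximal closed generalized flag $\F$ in $V$ with $\F \cup \F^\bot \cup \{M, M^\bot\}$ a chain for some maximal isotropic subspace $M \subset V$. Since $\langle \cdot, \cdot \rangle$ is antisymmetric, $M = M^\bot$, so $\F \cup \F^\bot \cup \{M\}$ is a chain, and Lemma~\ref{spsamestabilizer} gives $\b = \St_\F = \St_{\F_{iso}}$. The union of the isotropic subspaces in $\F$ is $M$, so $\F_{iso}$ is a maximal closed isotropic generalized flag in $V$. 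Thus every Borel subalgebra of $\sp(V)$ is the stabilizer of a maximal closed isotropic generalized flag in $V$.

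Conversely, let $\F$ be an arbitrary maximal closed isotropic generalized flag in $V$. By Lemma~\ref{spiso}, $\St_\F$ is locally solvable, so there exists a Borel subalgebra $\b$ with $\St_\F \subset \b$. We have seen that $\b = \St_\G$ for some maximal closed isotropic generalized flag $\G$ in $V$, so that $\St_\F \subset \St_\G$. Lemma~\ref{spinjective} then yields $\F = \G$, and hence $\St_\F = \b$ is a Borel subalgebra. Therefore $\F \mapsto \St_\F$ is a well-defined map from maximal closed isotropic generalized flags in $V$ to Borel subalgebras of $\sp(V)$. Proposition~\ref{special} together with Lemma~\ref{spsamestabilizer} shows this map is surjective, and Lemma~\ref{spinjective} shows it is injective. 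Hence it is a bijection.
\end{proof}
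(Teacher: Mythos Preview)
Your proof is correct and follows essentially the same approach as the paper's own proof, invoking Proposition~\ref{special}, Lemma~\ref{spsamestabilizer}, Lemma~\ref{spiso}, and Lemma~\ref{spinjective} in the same roles. If anything, you are slightly more explicit than the paper in noting that $M = M^\bot$ reconciles the chain hypothesis of Lemma~\ref{spsamestabilizer} with the output of Proposition~\ref{special}, and you correctly cite the symplectic lemmas where the paper (by apparent typo) cites the orthogonal ones.
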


\begin{proof}
Let $\b$ be a Borel subalgebra of $\sp(V)$.  Proposition \ref{special} states that $\b$ is the stabilizer of a maximal closed generalized flag $\F$ in $V$ with $\F \cup \F^\bot \cup \{M \}$ being a chain for some maximal isotropic subspace $M \subset V$.  By Lemma \ref{sosamestabilizer}, $\b = \St_{\F_{iso}}$.  Observe that $\F_{iso}$ is a maximal closed isotropic generalized flag in $V$, since the union of the isotropic subspaces in $\F$ must be $M$.  Hence every Borel subalgebra of $\sp(V)$ is the stabilizer of a maximal closed isotropic generalized flag in $V$.  

Now let $\F$ be an arbitrary maximal closed isotropic generalized flag in $V$.  By Lemma \ref{spiso}, $\St_\F$ is locally solvable.  Hence there exists a Borel subalgebra $\b$ with $\St_\F \subset \b$.  We have seen that there is a maximal closed isotropic generalized flag $\G$ with $\b = \St_\G$.  It follows from Lemma \ref{soinjective} that $\F = \G$.  As a result, $\St_\F = \b$ is a Borel subalgebra.  Hence $\F \mapsto \St_\F$ gives a map from maximal closed isotropic generalized flags in $V$ to Borel subalgebras of $\sp(V)$.  Proposition \ref{special} gives that the map is surjective, and Lemma \ref{spinjective} implies that it is injective.
\end{proof}

\subsection{Explicit formulas}\label{formulas}

One can find a nice kind of toral subalgebra in a Borel subalgebra $\b$ of one of the simple infinite-dimensional root-reductive Lie algebras.  In each case, there exist toral subalgebras $\t \subset \b$ such that $\b = \t + \n$, where $\n$ denotes the nilpotent subalgebra of $\b$.   Hence irreducible representations of $\b$ are given by characters of $\t$.  The relevant formulas are shown in Figure \ref{fig1}.  A similar analysis is seen in the case of $\gl_\infty$ in \cite{DP2}.  For more about toral subalgebras, see \cite{DPS}.

If $\b \subset \sl(V,V_*)$ is a Borel subalgebra, then $\b$ is the stabilizer in $\sl(V,V_*)$ of a unique maximal closed generalized flag $\F = \{ F'_\alpha, F''_\alpha \}_{\alpha \in A}$ in $V$.  It is also the stabilizer in $\sl(V,V_*)$ of a unique maximal closed generalized flag $\G = \{G'_\beta, G''_\beta \}_{\beta \in B}$ in $V_*$.  Let $C$ denote the good pairs of $A$, and we may also identify $C$ with the subset of good pairs of $B$.  There exist $1$-dimensional subspaces $L_\gamma \subset V$ and $M_\gamma \subset V_*$ for $\gamma \in C$ such that $\langle L_\gamma ,  M_c \rangle = \delta_{\gamma c} \C$, and such that $F''_\gamma = F'_\gamma \oplus L_\gamma$ and $(F'_\gamma)^\bot = (F''_\gamma)^\bot \oplus M_\gamma$.  In fact, one can go so far as to require that there exist vector space complements $X_\alpha$ of $F'_\alpha$ in $F''_\alpha$ for $\alpha \in A \setminus C$, and vector space complements $Y_\beta$ of $G'_\beta$ in $G''_\beta$ for $\beta \in B \setminus C$, such that $V = (\bigoplus_{\gamma \in C} L_\gamma) \oplus (\bigoplus_{\alpha \in A \setminus C} X_\alpha)$ and $V_* = (\bigoplus_{\gamma \in C} M_\gamma) \oplus (\bigoplus_{\beta \in B \setminus C} Y_\beta)$.  The associated toral subalgebra of $\b$ is $$\t = (\bigoplus_{\gamma \in C} L_\gamma \otimes M_\gamma) \cap \sl(V,V_*).$$  The same construction produces toral subalgebras inside of Borel subalgebras of $\gl(V,V_*)$ as well, and the formulas for $\gl(V,V_*)$ are also given in Figure \ref{fig1}.

If $\b \subset \so(V)$ is a Borel subalgebra, then $\b$ is the stabilizer in $\so(V)$ of a maximal closed isotropic generalized flag $\F = \{ F'_\alpha, F''_\alpha \}_{\alpha \in A}$ in $V$.  Let $\G := \fl(\F^\bot \cup \{V\}) = \{ G'_\beta , G''_\beta \}_{\beta \in B}$.  There exist $1$-dimensional subspaces $L_\gamma \subset V$ and $M_\gamma \subset V$ for $\gamma \in C$ such that $\langle M_\gamma,M_c \rangle = 0$ and $\langle L_\gamma , M_c \rangle = \delta_{\gamma c} \C$, and such that $F''_\gamma = F'_\gamma \oplus L_\gamma$ and $(F'_\gamma)^\bot = (F''_\gamma)^\bot \oplus L_{-\gamma}$ for all $\gamma \in C$.  In fact, one can go so far as to require that there exist vector space complements $X_\alpha$ of $F'_\alpha$ in $F''_\alpha$ for $\alpha \in A \setminus C$, and vector space complements $Y_\beta$ of $G'_\beta$ in $G''_\beta$ for $\beta \in B \setminus C$, as well as a vector space complement $S$ (necessarily of dimension $0$ or $1$) of $\bigcup_\alpha F''_\alpha$ in $(\bigcup_\alpha F''_\alpha)^\bot$, such that $V = (\bigoplus_{\gamma \in C} L_\gamma \oplus M_\gamma) \oplus (\bigoplus_{\alpha \in A \setminus C} X_\alpha) \oplus (\bigoplus_{\beta \in B \setminus C} Y_\beta) \oplus S$.  The associated toral subalgebra is $$\t = \bigoplus_{\gamma \in C} L_\gamma \wedge M_\gamma.$$

If $\b \subset \sp(V)$ is a Borel subalgebra, then $\b$ is the stabilizer in $\sp(V)$ of a unique maximal closed isotropic generalized flag $\F = \{ F'_\alpha, F''_\alpha \}_{\alpha \in A}$ in $V$.  Let $\G := \fl(\F^\bot \cup \{V\}) = \{ G'_\beta , G''_\beta \}_{\beta \in B}$.  Let $C$ denote the good pairs of $A$, and we may also consider $C$ as a subset of $B$.  There exist $1$-dimensional subspaces $L_\gamma \subset V$ and $M_\gamma \subset V$ for $\gamma \in C$ such that $\langle L_\gamma , M_c \rangle = \delta_{\gamma c} \C$ and $\langle M_\gamma , M_c \rangle = 0$, and such that $F''_\gamma = F'_\gamma \oplus L_\gamma$ and $(F'_\gamma)^\bot = (F''_\gamma)^\bot \oplus M_\gamma$.  In fact, one can go so far as to require that there exist vector space complements $X_\alpha$ of $F'_\alpha$ in $F''_\alpha$ for $\alpha \in A \setminus C$, and vector space complements $Y_\beta$ of $G'_\beta$ in $G''_\beta$ for $\beta \in B \setminus C$, such that $V = (\bigoplus_{\gamma \in C} L_\gamma \oplus M_\gamma) \oplus (\bigoplus_{\alpha \in A \setminus C} X_\alpha) \oplus (\bigoplus_{\beta \in B \setminus C} Y_\beta)$.  The associated toral subalgebra is $$\t = \bigoplus_{\gamma \in C} L_\gamma \& M_\gamma.$$ 

\begin{figure}
\caption{Formulas for the stabilizer $\b \subset \g$ of a maximal closed (isotropic) generalized flag $\{ F'_\alpha , F''_\alpha \}$ in $V$, the nilpotent subalgebra $\n$ of $\b$, and the toral subalgebra $\t$ associated to lines $L_\gamma$ and $M_\gamma$. }
 \label{fig1}
$$\begin{array}{c|c|c|c}
\g & \b & \n & \t \\ \hline

\gl(V,V_*) &  \sum_\alpha F''_\alpha \otimes (F'_\alpha)^\bot &  \sum_\alpha F''_\alpha \otimes (F''_\alpha)^\bot & \bigoplus_{\gamma \in C} L_\gamma \otimes M_\gamma \\ \hline

\sl(V,V_*) &  (\sum_\alpha F''_\alpha \otimes (F'_\alpha)^\bot) \cap \sl(V,V_*) &  \sum_\alpha F''_\alpha \otimes (F''_\alpha)^\bot & (\bigoplus_{\gamma \in C} L_\gamma \otimes M_\gamma) \cap \sl(V,V_*) \\ \hline

\so(V) &  \sum_\alpha F''_\alpha \wedge (F'_\alpha)^\bot &\sum_\alpha F''_\alpha \wedge (F''_\alpha)^\bot &  \bigoplus_{\gamma \in C} L_\gamma \wedge M_\gamma \\ \hline

\sp(V) &  \sum_\alpha F''_\alpha \& (F'_\alpha)^\bot &\sum_\alpha F''_\alpha \& (F''_\alpha)^\bot &  \bigoplus_{\gamma \in C} L_\gamma \& M_\gamma 
\end{array}$$
\end{figure}

\subsection{Two examples}

Let $V$ be the vector space with basis $\{ x_i : i \in \Z_{\neq 0} \}$, and let $V_*$ be the span of the elements $\{ x_i^* \in V^* : i \in \Z_{\neq 0} \}$, where $\langle \cdot , \cdot \rangle : V \times V_* \rightarrow \C$ is defined by $\langle x_i , x_j^* \rangle := \delta_{ij}$.  
Consider for $i \in \Z_{\neq 0}$ the subspace $F_i := \Span \{ x_j : j \leq i \} \subset V$.  For each $i$ the subspace $F_i \subset V$ is closed.  The chain
$$ \cdots \subset F_{-2} \subset F_{-1} \subset F_1 \subset F_2 \subset \cdots$$
 is a maximal closed generalized flag in $V$, and let $\b$ denote its stabilizer in $\sl(V,V_*)$, which is a Borel subalgebra of $\sl(V,V_*)$.  This example arises naturally from the finite-dimensional picture, since $\b$ is the union of Borel subalgebras of finite-dimensional subalgebras isomorphic to $\sl_n$ exhausting $\sl(V,V_*)$.  Explicitly, let $V_n := \Span \{ x_j : -n \leq j \leq n \} \subset V$ and $(V_n)_* := \Span \{ x_j^* : -n \leq j \leq n \} \subset V_*$, and define $\g_n := sl(V,V_*) \cap V_n \otimes (V_n)_*$.  Then $\g_n \cong \sl_{2n}$, and one may check that $\b \cap \g_n$ is a Borel subalgebra of $\g_n$.

For the second example, let $\g$ be the Lie algebra $\sl(V,V_*) \subsetplus \C X$, where one takes $X$ to have the same commutation relations as the formal sum $\sum_{i > 0} x_i \otimes (x_i^* + x_{-i}^*)$, in the notation of the first example.  One may check that $\g$ is a root-reductive Lie algebra.  The Borel subalgebra $\b$ of the first example is a locally solvable subalgebra of $\g$.  In fact $\b$ is a Borel subalgebra of $\g$.  To check this claim, it suffices to show that $\b$ is self-normalizing in $\g$, in light of Proposition \ref{normalizer} below.  Suppose $Y \in \sl(V,V_*)$ and $a \in \C$ are such that $Y + a X \in \n_\g (\b)$.  Then $Y \in \g_n$ for some $n$.  Consider the element $Z := x_{n+1} \otimes x_{n+1}^* - x_{n+2} \otimes x_{n+2}^* \in \b$, and compute
\begin{eqnarray*}
[Y + aX , Z] & = &  a[X , Z ] \\
& = & a [x_{n+1} \otimes (x_{n+1}^* + x_{-n -1}^*) + x_{n+2} \otimes (x_{n+2}^* + x_{-n -2}^*), Z] \\
& = & a (-x_{n+1} \otimes x_{-n-1}^* + x_{n+2} \otimes x_{-n-2}^*).
\end{eqnarray*}
Since $-x_{n+1} \otimes x_{-n-1}^* + x_{n+2} \otimes x_{-n-2}^* \notin \b$, it must be that $a=0$.  Hence $\n_\g (\b) \subset \sl(V,V_*)$, so $\n_\g (\b) = \b$.  Thus $\b$ is an example of a Borel subalgebra of $\sl(V,V_*)$ which remains maximal locally solvable when considered as a subalgebra of $\sl(V,V_*) \subsetplus \C X$.

\subsection{General case} \label{general}

Theorem \ref{main} states that any Borel subalgebra of an infinite-dimensional indecomposable root-reductive Lie algebra is the simultaneous stabilizer of a Borel generalized flag in each of the standard representations.  That is, if $\g$ is an infinite-dimensional indecomposable root-reductive Lie algebra, the image of the map $\{ \F_m \} \mapsto \bigcap_m \St_{\F_m}$ from families of Borel generalized flags in the standard representations of $\g$ to subalgebras of $\g$ contains the Borel subalgebras of $\g$.  At the same time, the image of the map  $\{ \F_m \} \mapsto \bigcap_m \St_{\F_m}$ from families of maximal closed generalized flags in the standard representations of $\g$ to subalgebras of $\g$ is contained in the Borel subalgebras of $\g$.  It is not the case that the simultaneous stabilizer of any family of Borel generalized flags in the standard representations is a Borel subalgebra.  For instance, there exist Borel generalized flags in $V$ which are not maximal closed, and the stabilizer of any such flag is not a Borel subalgebra of $\sl(V,V_*)$.
  
We can calculate the intersection of a Borel subalgebra $\b$ of an infinite-dimensional indecomposable root-reductive Lie algebra $\g$ with any simple direct summand of $[\g,\g]$.  Let $[\g,\g] \cong \bigoplus_m \s_m$ be the decomposition into simple direct summands, and let $V_m$ be the standard representations of $\g$.  Using Theorem \ref{main}, we know that for each $m$ there exist a bivalent closed generalized flag $\F_m$ in $V_m$ and a Borel generalized flag $\G_m$ refining $\F_m$ such that $\b = \bigcap_m \St_{\G_m}$.  Fix $m$, and consider $\F_m = \{ F'_\alpha , F''_\alpha \}_{\alpha \in A}$.  Let $B \subset A$ denote the pairs $\alpha$ such that $F'_\alpha$ is closed and $\dim F''_\alpha / F'_\alpha = \infty$.  Then one may check via a calculation similar to one in the proof of Proposition \ref{special} that $\b \cap \s_m = \St_{\G_m} \cap \s_m = (\sum_{\alpha \in A \setminus B} F''_\alpha \otimes (F'_\alpha)^\bot + \sum_{\beta \in B} F''_\beta \otimes (F''_\beta)^\bot  ) \cap \s_m$.  Clearly if $B$ is nonempty, then $\b \cap \s_m$ is not a Borel subalgebra of $\s_m$.

\begin{prop}\label{normalizer}
If $\b \subset [\g,\g]$ is a Borel subalgebra of $[\g,\g]$, then the normalizer $\n_\g (\b)$ is a Borel subalgebra of $\g$.  Moreover, if $\b'$ is a Borel subalgebra of $\g$ containing $\b$, then $\b' = \n_\g (\b)$.
\end{prop}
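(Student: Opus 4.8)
The plan is to deduce the whole proposition from a single claim: that $\n_\g(\b)$ is locally solvable. Recall that $[\g,\g]$ is an ideal of $\g$ with $\g/[\g,\g]$ abelian, and that $\g$, being root-reductive, is locally finite, so every subalgebra of $\g$ is locally finite. By Zorn's lemma choose a Borel subalgebra $\b'$ of $\g$ with $\b \subseteq \b'$. The subalgebra $\b' \cap [\g,\g]$ of $\b'$ is locally solvable and contains $\b$, so maximality of $\b$ among locally solvable subalgebras of $[\g,\g]$ gives $\b' \cap [\g,\g] = \b$. Since $\g/[\g,\g]$ is abelian, $[\b',\b'] \subseteq [\g,\g]$, hence $[\b',\b'] \subseteq \b' \cap [\g,\g] = \b$; in particular $\b' \subseteq \n_\g(\b)$. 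Consequently, once $\n_\g(\b)$ is known to be locally solvable, maximality of $\b'$ forces $\b' = \n_\g(\b)$. This shows at once that $\n_\g(\b)$ is a Borel subalgebra of $\g$, and, since the same argument applies verbatim to any Borel subalgebra of $\g$ containing $\b$, that every such subalgebra equals $\n_\g(\b)$.

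The single tool needed is the following elementary observation, used twice: \emph{a locally finite Lie algebra $\mathfrak{c}$ whose derived subalgebra $[\mathfrak{c},\mathfrak{c}]$ is locally solvable is itself locally solvable.} Indeed, any finitely generated subalgebra $\h \subseteq \mathfrak{c}$ is finite-dimensional by local finiteness; then $[\h,\h]$ is a finite-dimensional subalgebra of the locally solvable algebra $[\mathfrak{c},\mathfrak{c}]$, hence lies in a finite-dimensional solvable subalgebra and so is solvable, whence $\h$ is solvable. Thus $\mathfrak{c}$ is a union of finite-dimensional solvable subalgebras.

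I expect the main obstacle to be the structurally non-obvious point that $\b$ is self-normalizing in $[\g,\g]$; pleasantly, this too follows from the observation and needs no appeal to the classification. If $Y \in [\g,\g]$ normalizes $\b$, then $\b + \C Y$ is a subalgebra of $[\g,\g]$ with $[\b+\C Y,\b+\C Y] \subseteq \b$; being a subalgebra of the locally finite algebra $[\g,\g]$ it is locally finite, so the observation makes $\b + \C Y$ locally solvable, and maximality of $\b$ forces $Y \in \b$. Hence $\n_{[\g,\g]}(\b) = \b$. Now apply the observation to $\mathfrak{c} = \n_\g(\b)$: since $\g/[\g,\g]$ is abelian, $[\n_\g(\b),\n_\g(\b)] \subseteq [\g,\g]$, so $[\n_\g(\b),\n_\g(\b)] \subseteq \n_\g(\b) \cap [\g,\g] = \n_{[\g,\g]}(\b) = \b$, which is locally solvable; and $\n_\g(\b)$ is locally finite as a subalgebra of $\g$. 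Therefore $\n_\g(\b)$ is locally solvable, which is exactly what was needed. The only care required is the bookkeeping around ``locally solvable'', ``locally finite'', and ``finitely generated'' in the observation; there is no computation.
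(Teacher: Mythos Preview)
Your proof is correct and follows essentially the same approach as the paper: both arguments show $\n_{[\g,\g]}(\b)=\b$ via the fact that $\b+\C Y$ has derived subalgebra contained in $\b$, deduce $[\n_\g(\b),\n_\g(\b)]\subseteq\b$ so that $\n_\g(\b)$ is locally solvable, and then show any Borel $\b'\supseteq\b$ satisfies $\b'\cap[\g,\g]=\b$ and hence $\b'\subseteq\n_\g(\b)$. The only difference is cosmetic: you isolate the ``derived subalgebra locally solvable $\Rightarrow$ locally solvable'' step as a named observation, whereas the paper uses it implicitly.
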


\begin{proof}
Note that $\b \subset \n_\g (\b) \cap [\g,\g]$.  For any $X \in \n_\g (\b) \cap [\g,\g]$, it must be that $\b + \C X$ is a locally solvable subalgebra of $[\g,\g]$, since $[\b + \C X , \b + \C X] \subset \b$.  By the maximality of $\b$, we know $\b = \b + \C X$, i.e. $X \in \b$.  Therefore $\b = \n_\g (\b) \cap [\g,\g]$.

Compute  $[ \n_\g (\b) , \n_\g (\b) ] \subset \n_\g (\b) \cap [\g,\g] = \b$.  As a result $\n_\g (\b)$ is a locally solvable subalgebra of $\g$.  

Let $\b'$ be any Borel subalgebra of $\g$ containing $\b$.  Then $\b' \cap [\g,\g]$ is a locally solvable subalgebra of $[\g,\g]$ containing $\b$.  By the maximality of $\b$, it holds that $\b = \b' \cap [\g,\g]$.  Therefore $[\b',\b] \subset [\b',\b'] \subset \b' \cap [\g,\g] = \b$, i.e. $\b' \subset \n_\g (\b)$.  By the maximality of $\b'$, we have $\b' = \n_\g (\b)$.  Thus $\n_\g (\b)$ is a Borel subalgebra of $\g$, and the second statement of the proposition also follows. 
\end{proof}

As a corollary of Proposition \ref{normalizer}, the simultaneous stabilizer in $\g$ of a maximal closed (isotropic) generalized flag in each of the standard representations is independent of the choices made in defining the action of $\g$ on its standard representations.  Another easy conseqence is the following theorem.

\begin{thm}\label{Borelcorrespondence}
Let $\g$ be an arbitrary root-reductive Lie algebra.  The map $\b \mapsto \n_\g(\b)$ yields a bijection from the set of Borel subalgebras of $[\g,\g]$ to the set of Borel subalgebras of $\g$ whose intersection with $[\g,\g]$ is a Borel subalgebra of $[\g,\g]$.
\end{thm}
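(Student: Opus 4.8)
The plan is to show that $\b \mapsto \n_\g(\b)$ and $\b' \mapsto \b' \cap [\g,\g]$ are mutually inverse bijections between the two indicated sets. The key input is Proposition \ref{normalizer}, which already does most of the work. First I would verify that the map $\b \mapsto \n_\g(\b)$ lands in the correct codomain: given a Borel subalgebra $\b$ of $[\g,\g]$, Proposition \ref{normalizer} says $\n_\g(\b)$ is a Borel subalgebra of $\g$, and in the course of its proof it was shown that $\n_\g(\b) \cap [\g,\g] = \b$, which is by hypothesis a Borel subalgebra of $[\g,\g]$. So $\n_\g(\b)$ indeed belongs to the set of Borel subalgebras of $\g$ whose intersection with $[\g,\g]$ is a Borel subalgebra of $[\g,\g]$.

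Next I would check that $\b' \mapsto \b' \cap [\g,\g]$ is a well-defined map in the reverse direction: by definition of the target set, if $\b'$ is a Borel subalgebra of $\g$ with $\b' \cap [\g,\g]$ a Borel subalgebra of $[\g,\g]$, then $\b' \cap [\g,\g]$ is exactly such an object. Then I would show the two maps compose to the identity in both directions. For one direction: starting from a Borel subalgebra $\b$ of $[\g,\g]$, we have $\n_\g(\b) \cap [\g,\g] = \b$ as just noted, so $(\b' \mapsto \b' \cap [\g,\g]) \circ (\b \mapsto \n_\g(\b))$ is the identity. For the other direction: starting from a Borel subalgebra $\b'$ of $\g$ with $\b := \b' \cap [\g,\g]$ a Borel subalgebra of $[\g,\g]$, I must show $\n_\g(\b) = \b'$. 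But $\b'$ is a Borel subalgebra of $\g$ containing $\b$ (since $\b = \b' \cap [\g,\g] \subseteq \b'$), so the second statement of Proposition \ref{normalizer} gives $\b' = \n_\g(\b)$ directly.

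Since both composites are the identity, the map $\b \mapsto \n_\g(\b)$ is a bijection with inverse $\b' \mapsto \b' \cap [\g,\g]$, which is the assertion of the theorem. I do not anticipate a serious obstacle here: essentially everything needed has been extracted in Proposition \ref{normalizer}, and the remaining content is bookkeeping about domains and codomains plus the two identity-composite checks. The only point requiring a moment's care is confirming that $\n_\g(\b)$ genuinely lies in the restricted target set (not merely that it is some Borel subalgebra of $\g$), but this is immediate from the identity $\n_\g(\b) \cap [\g,\g] = \b$ established inside the proof of Proposition \ref{normalizer}.
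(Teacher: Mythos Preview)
Your proposal is correct and follows essentially the same approach as the paper: both arguments rely entirely on Proposition \ref{normalizer}, using $\n_\g(\b)\cap[\g,\g]=\b$ for one composite and the second statement of that proposition for the other. Your version is slightly more explicit in verifying that both composites are the identity, whereas the paper phrases the surjectivity onto the target set more tersely, but there is no substantive difference.
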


\begin{proof}
Proposition \ref{normalizer} implies that the map $\b \mapsto \n_\g (\b)$ from Borel subalgebras of $[\g,\g]$ to subalgebras of $\g$ lands inside the set of Borel subalgebras of $\g$.  It was also seen in the proof that if $\b$ is a Borel subalgebra of $[\g,\g]$, then $\n_\g (\b) \cap [\g,\g] = \b$.  That is, the composition of the first  map with the map from Borel subalgebras of $\g$ to subalgebras of $[\g,\g]$ given by intersecting, i.e. $\b \mapsto \b \cap [\g,\g]$, is the map $\b \mapsto \b$.  The image of the map $\b \mapsto \n_\g (\b)$ is precisely the set of Borel subalgebras of $\g$ which yield Borel subalgebras when intersected with $[\g,\g]$.
\end{proof}

This yields a large class of Borel subalgebras of $\g$ which are in bijection with the Borel subalgebras of $[\g,\g]$.  Since $[\g,\g]$ decomposes into a direct sum of simple root-reductive Lie algebras, Borel subalgebras of $[\g,\g]$ can be understood as direct sums of Borel subalgebras of the simple direct summands of $[\g,\g]$.  This is a good context in which to view the results of this paper on Borel subalgebras of $\sl_\infty$, $\so_\infty$, and $\sp_\infty$, the three infinite-dimensional simple root-reductive Lie algebras.

The question remains open whether there exists a root-reductive Lie algebra $\g$ containing a Borel subalgebra $\b \subset \g$ such that $\b \cap [\g,\g]$ is not a Borel subalgebra of $[\g,\g]$.  If one could show that no such examples exist, then Theorem \ref{Borelcorrespondence} would become a classification of the Borel subalgebras of root-reductive Lie algebras.  This outcome would be nice in a way, yet it seems to me unlikely.  I would conjecture that this phenomenon does occur.  Such Borel subalgebras might seem pathological, but I do not see any simple way to preclude their existence.  Indeed, the commutator subalgebra $[\g,\g]$ is not as large in $\g$ as one might think.  As an illustration, a root-reductive Lie algebra $\g$ and a maximal toral subalgebra $\t \subset \g$ are constructed in \cite{DPS} with $\t \cap [\g,\g] = 0$, a far cry from a maximal toral subalgebra of $[\g,\g]$.  In light of this, one might reasonably hope to construct explicitly an example of a Borel subalgebra $\b \subset \g$ such that $\b \cap [\g,\g]$ is not a Borel subalgebra of $[\g,\g]$.  This last remaining gap in a basic understanding of Borel subalgebras of root-reductive Lie algebras would be closed by either producing such an example or proving that none exists.

\end{document}